\def\ve#1{\mathchoice{\mbox{\boldmath$\displaystyle\bf#1$}}
{\mbox{\boldmath$\textstyle\bf#1$}}
{\mbox{\boldmath$\scriptstyle\bf#1$}}
{\mbox{\boldmath$\scriptscriptstyle\bf#1$}}}
\newcommand\vealpha{{\boldsymbol{\alpha}}}
\newcommand\velambda{{\boldsymbol{\lambda}}}
\newcommand\Z{\mathbb Z}   
\newcommand\R{\mathbb R}   
\newcommand\Q{\mathbb Q}   
\newcommand\Po{\mathcal P}   
\newcommand\Co{\mathcal C}   
\newcommand\Tr{\mathcal T}   
\newcommand\In{\mathcal I}  
\DeclareMathOperator{\td}{td}
\DeclareMathOperator{\cone}{cone}
\DeclareMathOperator{\interior}{relint}
\DeclareMathOperator{\conv}{conv}   
\DeclareMathOperator{\vol}{vol}     
\DeclareMathOperator{\supp}{supp}        
\DeclareMathOperator{\affine}{aff}       
\DeclareMathOperator{\rank}{rank}
\DeclareMathOperator{\cl}{cl}       
\DeclareMathOperator{\lin}{lin}     
\let\epsilon=\varepsilon
\newcommand{\DeclareBracket}[3]{
  \newcommand{#1}[2][]{%
  \ifthenelse%
  {\equal{##1}{}}%
  {\left#2##2\right#3}%
  {\csname ##1l\endcsname#2##2\csname ##1r\endcsname#3}}}    
\newcommand\C{\mathbb C}
\newcommand\ifpdf
\newtheorem{theorem}{Theorem}%
\newtheorem{lemma}{Lemma}
\renewcommand*{\c@lemma}{\c@theorem}
\renewcommand*{\p@lemma}{\p@theorem}
\newtheorem{conjecture}{Conjecture}
\renewcommand*{\c@conjecture}{\c@theorem}
\renewcommand*{\p@conjecture}{\p@theorem}
\renewcommand*{\c@proposition}{\c@theorem}
\renewcommand*{\p@proposition}{\p@theorem}
\newtheorem{corollary}{Corollary}
\renewcommand*{\c@corollary}{\c@theorem}
\renewcommand*{\p@corollary}{\p@theorem}
\renewcommand*{\c@observation}{\c@theorem}
\renewcommand*{\p@observation}{\p@theorem}
\theoremstyle{definition}
\renewcommand*{\c@problem}{\c@theorem}
\renewcommand*{\p@problem}{\p@theorem}
\newtheorem{definition}{Definition}
\renewcommand*{\c@definition}{\c@theorem}
\renewcommand*{\p@definition}{\p@theorem}
\newtheorem{remark}{Remark}
\renewcommand*{\c@remark}{\c@theorem}
\renewcommand*{\p@remark}{\p@theorem}
\newtheorem{example}{Example}
\renewcommand*{\c@example}{\c@theorem}
\renewcommand*{\p@example}{\p@theorem}
\newtheorem{algorithm}{Algorithm}
\renewcommand*{\c@algorithm}{\c@theorem}
\renewcommand*{\p@algorithm}{\p@theorem}
\title[Ehrhart Polynomials of Matroid Polytopes and Polymatroids]
{Ehrhart Polynomials of\\ Matroid Polytopes and Polymatroids}
\author{Jes\'us A. De Loera}
\address{Jes\'us A. De Loera: Department of Mathematics,
University of California,
Davis, CA 95616, USA}
\email{deloera@math.ucdavis.edu}
\author{David C. Haws}
\address{David C. Haws: Department of Mathematics,
University of California,
Davis, CA 95616, USA}
\email{haws@math.ucdavis.edu}
\author{Matthias K\"oppe}
\address{Matthias~K\"oppe: Otto-von-Guericke-Universit\"at Magdeburg, Department of
  Mathematics, Institute for Mathematical Optimization (IMO),
  Univer\-si\-t\"ats\-platz~2, 
  39106 Magdeburg, Germany} 
\email{mkoeppe@imo.math.uni-magdeburg.de}
\begin{document}

\date{$\relax$Revision: 1.83 $ - \ $Date: 2007/10/19 21:51:31 $ $}

\begin{abstract}
We investigate properties of Ehrhart polynomials for matroid
polytopes, independence matroid polytopes, and polymatroids. In the
first half of the paper we prove that for fixed rank their Ehrhart
polynomials are computable in polynomial time. The proof relies on the
geometry of these polytopes as well as a new refined analysis of the
evaluation of Todd polynomials. In the second half we discuss two
conjectures about the $h^*$-vector and the coefficients of 
Ehrhart polynomials of matroid polytopes; we provide theoretical and 
computational evidence for their validity.
\end{abstract}

\maketitle

\section{Introduction}
  
Recall that a \emph{matroid} $M$ is a finite
collection of subsets $\mathcal{F}$ of $[n] = \{1,2,\dots,n\}$ called
\emph{independent sets}, such that the following properties are satisfied:
{\bf (1)} $\emptyset \in \mathcal{F}$, {\bf (2)} if $X \in \mathcal{F}$ and $Y \subseteq X$ then $Y \in \mathcal{F}$, {\bf (3)} if $U, V \in \mathcal{F}$ and $|U| = |V| + 1$ there exists $x \in U \setminus V$ such that $V \cup x \in \mathcal{F}$. In this paper
we investigate  convex polyhedra associated with matroids.


One of the reasons matroids have become fundamental objects in pure and applied
combinatorics is their many equivalent axiomatizations. For instance,
for a matroid $M$ on $n$ elements with independent sets $\mathcal F$ the
\emph{rank function} is a function $\varphi\colon 2^{[n]} \rightarrow \Z$ where
$\varphi (A) := \max \{\, |X| \mid X \subseteq A, \, X \in \mathcal F \,\}$.
 Conversely a function $\varphi \colon 2^{[n]} \rightarrow \Z$ is
 the rank function of a matroid on $[n]$ if and only if the
 following are satisfied: {\bf (1)} $0 \leq \varphi (X) \leq |X|$, {\bf (2)} $X \subseteq Y \Longrightarrow \varphi (X) \leq \varphi(Y)$, {\bf (3)} $\varphi (X \cup Y) + \varphi (X \cap Y) \leq \varphi (X) + \varphi (Y)$.
Similarly, recall that a matroid $M$ can be defined by its
\emph{bases}, which are the inclusion-maximal independent sets. The
bases of a matroid $M$ can be recovered by its rank function
$\varphi$. For the reader we recommend \cite{Oxley1992Matroid-Theory}
or \cite{Welsh1976Matroid-Theory} for excellent introductions to the
theory of matroids.


Now we introduce the main object of this paper.  Let $\mathcal{B}$ be the set
of bases of a matroid $M$. If $B = \{\sigma_1,\ldots,\sigma_r\} \in
\mathcal{B}$, we define the \emph{incidence vector of B} as $\ve  e_B :=
\sum_{i=1}^r \ve e_{\sigma_i}$, where $\ve e_j$ is the standard elementary
$j$th vector in $\R^n$. The \emph{matroid polytope} of $M$ is defined as
$\Po(M) := \conv \{\, \ve e_B  \mid B \in \mathcal{B} \, \}$, where
$\conv(\cdot)$ denotes the convex hull. This is different from the well-known
\emph{independence matroid polytope}, $\Po^{\In} (M) := \conv \{ \, \ve e_I  \mid  I
        \subseteq B \in \mathcal B \, \}$, the convex hull of the incidence vectors
of all the independent sets. We can see that $\Po(M) \subseteq \Po^\In(M)$ and
$\Po(M)$ is a face of $\Po^\In(M)$ lying in the hyperplane $\sum_{i=1}^n x_i =
\rank(M)$, where $\rank(M)$ is the cardinality of any basis of $M$. 

\emph{Polymatroids} are closely related to matroid polytopes and
independence matroid polytopes.  We first recall some basic
definitions (see \cite{Welsh1976Matroid-Theory}). A function
$\psi \colon 2^{[n]} \longrightarrow \R$ is \emph{submodular} if $\psi(X \cap
Y) + \psi(X \cup Y) \leq \psi(X) + \psi(Y)$ for all $X, Y \subseteq
[n]$. A function $\psi \colon 2^{[n]} \longrightarrow \R$ is
\emph{non-decreasing} if $\psi(X) \leq \psi(Y)$ for all $X \subseteq Y
\subseteq [n]$. We say $\psi$ is a \emph{polymatroid rank
function} if it is submodular, non-decreasing, and $\psi(\emptyset) =
0$. For example, the rank function of a matroid is a
polymatroid rank function. The \emph{polymatroid} determined by a polymatroid rank function
$\psi$ is the convex polyhedron (see Theorem 18.2.2 in
\cite{Welsh1976Matroid-Theory}) in $\R^n$ given by
\begin{equation*}
    \Po(\psi) := \Big\{ \, \ve x \in \R^n \ \Big| \ \sum_{i \in A} x_i \leq \psi(A) \ \forall A \subseteq [n], \, \ve x \geq \ve0 \, \Big\}  .
\end{equation*}

Independence matroid polytopes are a special class of polymatroids. Indeed, if
$\varphi$ is a rank function on some matroid $M$, then $\Po^\In(M) =
\Po(\varphi)$ \cite{Edmonds2003Submodular-func}. Moreover, the matroid polytope
$\Po(M)$ is the face of $\Po(\varphi)$ lying in the hyperplane $ \sum_{i=1}^n
x_i = \varphi([n]) $.  Matroid polytopes and polymatroids appear in
combinatorial optimization \cite{Schrijver2003Combinatorial-O}, algebraic
combinatorics \cite{Feichtner2004Matroid-polytop}, and algebraic geometry
\cite{Gelfand1987Combinatorial-g}. The main theme of this paper is the study
of the volumes and Ehrhart functions of matroid polytopes, independence matroid
polytopes, and polymatroids (from now on we often refer to all three families
as matroid polytopes).

To state our main results recall that given an integer $k > 0$ and a
polytope $\Po \subseteq \R^n$ we define $k \Po := \{\, k \ve \alpha  \mid  \ve \alpha \in \Po
\, \}$ and the function $i(\Po,k) := \#(k\Po \cap \Z^n) $, where we
define $i(\Po,0) :=1$. It is well known that for integral polytopes,
as in the case of matroid polytopes, $i(\Po,k)$ is a polynomial,
called the \emph{Ehrhart polynomial} of $\Po$.  Moreover the leading
coefficient of the Ehrhart polynomial is the \emph{normalized volume}
of $\Po$, where a unit is the volume of the fundamental domain of the
affine lattice spanned by $\Po$ \cite{Stanley1996Combinatorics-a}. Our
first theorem states:

\begin{theorem} \label{volume}
  Let $r$ be a fixed integer.
  Then there exist algorithms whose input data consists of a number $n$ and an evaluation oracle for 
  \begin{enumerate}[\rm(a)]
  \item a rank function~$\varphi$ of a matroid~$M$ on $n$ elements
    satisfying $\varphi(A) \leq r$ for all~$A$, or
  \item an integral polymatroid rank function~$\psi$ satisfying $\psi(A) \leq r$ for
    all~$A$,
  \end{enumerate}
  which compute in time polynomial in~$n$ the Ehrhart polynomial (in
  particular, the volume) of the matroid polytope $\Po(M)$, the independence
  matroid polytope $\Po^\In(M)$, and the polymatroid~$\Po(\psi)$,
  respectively.
\end{theorem}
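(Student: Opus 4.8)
The plan is to reduce all three families to one lattice-point problem for integral polymatroids, to observe that these polytopes have very few vertices and that each vertex has small support, and then to evaluate the Ehrhart function through a version of Brion's formula adapted to the combinatorial symmetry of the tangent cones. Concretely: since $\Po^\In(M)=\Po(\varphi)$ and $\Po(M)$ is the face of $\Po(\varphi)$ lying in the hyperplane $\sum_i x_i=\rank(M)$, it suffices to produce the Ehrhart polynomial of an integral polymatroid $\Po(\psi)$ with $\psi(A)\le r$ for all~$A$ (the face $\Po(M)$ is then handled by the same argument carried out inside that hyperplane). Because $\Po(k\psi)=k\,\Po(\psi)$, we have $i(\Po(\psi),k)=\#\bigl(\Po(k\psi)\cap\Z^n\bigr)$, and since $\deg i(\Po(\psi),\cdot)=\dim\Po(\psi)\le n$, it is enough to compute this number for $k=0,1,\dots,n$ in time polynomial in~$n$ and then Lagrange-interpolate.

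It is classical (Edmonds) that $\Po(\psi)$ is an integral polytope, and because $\psi$ is non-decreasing every $x\in\Po(\psi)\cap\Z^n$ satisfies $\sum_i x_i\le\psi([n])\le r$ and $0\le x_i\le\psi(\{i\})\le r$. Hence every lattice point of $\Po(\psi)$, in particular every vertex, has support of size at most~$r$ and entries at most~$r$, so $\Po(\psi)$ — and likewise each dilate $k\,\Po(\psi)=\Po(k\psi)$, whose vertices are $k$ times those of $\Po(\psi)$ — has at most $\sum_{s=0}^{r}\binom ns r^{s}=O(n^{r})$ vertices. Using the oracle one can produce all of them, together with the at most $2^{r}$ ``saturated'' sets determining the facets through each vertex, with $O(n^{r+1})$ oracle calls: run the greedy algorithm over all orderings of all $\le r$-element subsets of~$[n]$, testing feasibility of a candidate supported on~$S$ only against the $2^{|S|}$ subsets of~$S$ (which suffices by monotonicity of~$\psi$).

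Fix~$k$ and set $Q=\Po(k\psi)$. By Brion's theorem, $\#(Q\cap\Z^n)$ equals the regularized evaluation at $z=\ve1$ of $\sum_{v}\sum_{x\in K_v\cap\Z^n}z^{x}$, the outer sum over the $O(n^{r})$ vertices~$v$ of~$Q$ and $K_v$ the supporting cone at~$v$. The obstruction is that $K_v\subseteq\R^n$ is full-dimensional, so Barvinok's cone decomposition applied as a black box costs time exponential in~$n$. The structural fact that rescues us: writing $S=\supp v$ ($|S|\le r$), the facets of~$Q$ through~$v$ are $x_j\ge 0$ for $j\notin S$ together with at most $2^{r}$ inequalities $\sum_{i\in \bar A_l}x_i\le c_l$, where one checks that each $\bar A_l$ is the closure of a saturated subset of~$S$ and the constants~$c_l$ depend only on $\psi|_{2^{S}}$ and~$k$. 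Therefore the coordinates $j\in[n]\setminus S$ fall into at most $C(r):=2^{2^{r}}$ \emph{types} (determined by the set $\{\,l : j\in\bar A_l\,\}$), $K_v$ is invariant under permuting coordinates within a type, and the whole cone is described by $\psi|_{2^{S}}$, by~$k$, and by the type-multiplicities $m_1,\dots,m_{C(r)}$ with $\sum_t m_t=n-|S|$; after a unimodular change of coordinates respecting the types, its lattice generating function is a product of at most $C(r)$ elementary rational functions of bounded complexity, raised to the powers~$m_t$, times a factor in the $\le r$ coordinates indexed by~$S$.

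The technical heart — this is the ``refined analysis of the evaluation of Todd polynomials'' promised in the abstract — is to show that the regularized evaluation of such a product is a polynomial in $m_1,\dots,m_{C(r)}$ of degree bounded in terms of~$r$ alone, with coefficients computable in time polynomial in~$n$. Substituting $z_i=e^{t\xi_i}$ for a generic linear form~$\xi$ turns the contribution of~$K_v$ into the coefficient of~$t^{0}$ in a product $\prod_t g_t(t)^{m_t}\cdot h(t)$ of meromorphic functions; although the order of the pole at $t=0$ grows linearly with~$n$, that coefficient is governed by boundedly many Todd-type derivatives of $\log g_t$ and of~$h$, evaluated at arguments that are polynomial in the~$m_t$, and hence is itself a polynomial of bounded degree in the~$m_t$. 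Summing these contributions over the $O(n^{r})$ vertices yields $\#(Q\cap\Z^n)$ in polynomial time; running this for $k=0,1,\dots,n$ and interpolating gives the Ehrhart polynomial of $\Po(\psi)$, and with it those of $\Po^\In(M)$ and of the matroid polytope $\Po(M)$. The main obstacle is precisely this last paragraph: keeping the Todd evaluation polynomial in~$n$ even though the tangent cones live in dimension~$n$.
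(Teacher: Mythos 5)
Your overall architecture shares the paper's skeleton (Brion's theorem, the observation that polymatroid vertices are $O(n^r)$ in number with support $\leq r$, and a promise of ``refined Todd evaluation''), and your vertex-enumeration via the greedy over orderings of $\leq r$-element sets is a clean alternative to the paper's BFS-on-the-edge-graph (Lemma~\ref{lem:polyenum}); submodularity does ensure that dropping a zero-increment element from a generating order preserves the generated vertex, so that part is sound. The interpolation over $k=0,\dots,n$ in place of the paper's explicit parametric formula is also fine.

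The gap is the claim that, after a unimodular change of coordinates respecting the ``types,'' the lattice generating function of the tangent cone~$K_v$ is a product $\prod_t g_t(\ve z)^{m_t}\cdot h(\ve z)$ with the $g_t$ and $h$ of complexity bounded in~$r$ alone. This is false: the tangent cone is not a product cone, and the type symmetry does not make it one. Take the independence polytope of $U^{2,n}$ and the vertex $\ve v = \ve e_1 + \ve e_2$. The tangent cone translated to the origin is
$\{\,\ve y: y_1\leq 0,\ y_2\leq 0,\ y_j\geq 0\ (j\geq 3),\ \textstyle\sum_i y_i\leq 0\,\}$,
a full-dimensional cone with $2n-2$ extreme rays ($-\ve e_1$, $-\ve e_2$, and $\ve e_j-\ve e_1$, $\ve e_j-\ve e_2$ for $j\geq 3$). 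All coordinates $j\geq 3$ are of the same type, yet this cone is far from simplicial and has no direct-product (or tensor-power) structure; its generating function is a genuine sum over a triangulation. Consequently the meromorphic function obtained by substituting $z_i\mapsto e^{t\xi_i}$ is not of the form $\prod_t g_t(t)^{m_t}h(t)$, and the ``boundedly many Todd-type derivatives of $\log g_t$'' step has nothing to apply to. This is precisely the obstruction the paper resolves by a different route: it triangulates $K_v$, proves every such triangulation is unimodular (total unimodularity of $[\ve e_i-\ve e_j,\,\ve e_i,\,-\ve e_j]$, Lemma~\ref{lem:eluni}), bounds the number of maximal simplices polynomially by a volume argument inside $[-1,1]^A\times\tilde\Delta_{[n]\setminus A}$ (Lemma~\ref{lem:elsimpbnd}), computes the triangulation by the placing algorithm in polynomial time (Theorem~\ref{thm:ept}), and removes the exponentially many intersecting faces via a half-open decomposition (Lemma~\ref{th:exactify-identities}). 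Only then does it face the specialization problem for a \emph{sum} of $|I|$ terms $\ve z^{\ve a_i}/\prod_{j=1}^{s_i}(1-\ve z^{\ve b_{ij}})$ with $n$ and $s_i$ both growing, and it proves (Theorem~\ref{th:todd-evaluation} via Lemma~\ref{lemma:toddy-expansion}) that Todd polynomials can be evaluated in time polynomial in both $s$ and $m$. You would need to replace the product heuristic by an actual polynomial-size unimodular triangulation and then carry out the Todd-evaluation bound; without that, the central step does not go through.
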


The computation of volumes is one of the most fundamental geometric
operations and it has been investigated by several authors from the
algorithmic point of view. While there are a few cases for which the volume
can be computed efficiently (e.g., for convex polytopes in fixed
dimension), it has been proved that computing the volume of polytopes
of varying dimension is $\# P$-hard
\cite{dyerfrieze88,brightwellwinkler91,khachiyan93,lawrence91}. Moreover
it was proved that even approximating the volume is hard
\cite{elekes86}. Clearly, computing Ehrhart polynomials is a harder
problem still.  To our knowledge two previously known families of
varying-dimension polytopes for which there is efficient computation of the
volume are simplices or simple polytopes for which the
number of vertices is polynomially bounded (this follows from Lawrence's
volume formula \cite{lawrence91}). Already for simplices
it is at least NP-hard to compute the whole list of coefficients of the 
Ehrhart polynomial, while recently
\cite{barvinok-2006-ehrhart-quasipolynomial}  presented a
polynomial time algorithm to compute any fixed number of the highest
coefficients of the Ehrhart polynomial of a simplex of varying
dimension.  Theorem \ref{volume} provides another interesting family
of varying dimension with volume and Ehrhart polynomial that can be
computed efficiently.  The proof of Theorem \ref{volume}, presented in
Section \ref{sec:compehrhart}, relies on the geometry of tangent cones
at vertices of our polytopes as well as a new refined analysis of the
evaluation of Todd polynomials in the context of the
computational theory of rational generating functions developed by
\cite{bar,barvinok:99,barvinok-woods-2003,barvinok-2006-ehrhart-quasipolynomial,latte1,latte2,Woods:thesis,verdoolaege-woods-2005}.

In the second part of the paper, developed in Section \ref{sec:hstar},
we investigate algebraic properties of the Ehrhart functions of
matroid polytopes: The \emph{Ehrhart series} of a polytope $\Po$ is
the infinite series $\sum_{k=0}^\infty i(\Po,k)t^k$. We recall the
following classic result about Ehrhart series (see e.g.,
\cite{Hibi1992Algebraic-Combi, Stanley1996Combinatorics-a}). Let 
$\Po \subseteq \R^n$ be an integral convex polytope of dimension
$d$. Then it is known that its Ehrhart series is a rational function
of the form
\begin{equation} \label{lem:ratehr}
\sum_{k=0}^\infty i(\Po,k)t^k = \frac{ h^*_0 + h^*_1 t + \cdots + h^*_{d-1}t^{d-1} + h^*_d t^d }{(1-t)^{d+1}}.
\end{equation}
The numerator is
often called the $\mathit{h}^*$\emph{-polynomial} of $\Po$, and we
define the coefficients of the polynomial in the numerator of Lemma
\ref{lem:ratehr}, $h^*_0 + h^*_1 t + \cdots + h^*_{d-1}t^{d-1} + h^*_d
t^d$, as the $h^*$-\emph{vector} of $\Po$, which we write as $\ve
h^*(\Po) := (h^*_0, h^*_1, \dots, h^*_{d-1}, h^*_d)$.

A vector $(c_0,\ldots,c_d)$ is \emph{unimodal} if there exists an index~$p$, $0
\leq p \leq d$, such that $c_{i-1} \leq c_{i}$ for $i \leq p$ and $c_{j}
\geq c_{j+1}$ for $j \geq p$. Due to its algebraic implications,
several authors have studied the unimodality of $h^*$-vectors (see
\cite{Hibi1992Algebraic-Combi} and \cite{Stanley1996Combinatorics-a}
and references therein). 
It is well-known that if the Ehrhart ring of an integral polytope
$\Po$, $A(\Po)$, is Gorenstein, then $\ve h^*(\Po)$ is unimodal, and
symmetric \cite{Hibi1992Algebraic-Combi,Stanley1996Combinatorics-a}.
Nevertheless, the vector $\ve h^*(\Po)$ can be unimodal even when the
Ehrhart ring $A(\Po)$ is not Gorenstein. For matroid polytopes, their
Ehrhart ring is indeed often not Gorenstein.  For instance, De Negri and Hibi
\cite{Negri1997Gorenstein-Alge} prove explicitly when the
Ehrhart ring of a uniform matroid polytope is Gorenstein or not.  Two
fascinating facts, uncovered through experimentation, are that all
$h^*$-vectors seen thus far are unimodal, even for the cases when
their Ehrhart rings are not Gorenstein. In addition, when we computed
the explicit Ehrhart polynomials of matroid polytopes we observe their
coefficients are always positive. We conjecture:

\begin{conjecture} \label{unimodalconj} Let $\Po(M)$ be the matroid polytope of a matroid $M$.
    \begin{itemize}
        \item[(A)]  The $h^*$-vector of $\Po(M)$ is unimodal.
        \item[(B)]  The coefficients of the Ehrhart polynomial of $\Po(M)$ are positive. 
    \end{itemize}
\end{conjecture}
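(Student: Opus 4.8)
The conjecture packages two rather different phenomena, and the plan is to attack each through the structure of matroid polytopes, in both cases reducing to families — principally the hypersimplices $\Delta_{r,n}$, which are the base polytopes of uniform matroids $U_{r,n}$ — where the Ehrhart data is explicit.

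For part~(A), the first step is to record that $\Po(M)$ has the integer decomposition property (faces of integral polymatroids do), so that $\ve h^*(\Po(M))$ is the $h$-vector of a standard graded Cohen--Macaulay ring (Hochster) and in particular an M-sequence; this constrains, but does not yet force, unimodality. To upgrade, I would look for a unimodular triangulation $\Tr$ of $\Po(M)$ — whether one always exists is itself open, but hypersimplices and several other classes have good ones — and use that $\ve h^*(\Po(M))$ then equals the $h$-vector of $\Tr$; it would suffice to exhibit such a $\Tr$ with a provably unimodal $h$-vector, via an explicit shelling-type order. The most ambitious, and cleanest, route is real-rootedness of the $h^*$-polynomial of $\Po(M)$: this is a sharper statement consistent with all of our data, it implies log-concavity and hence unimodality, and for $U_{r,n}$ it can be checked against the known closed form for the $h^*$-vector of $\Delta_{r,n}$.

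For part~(B), a few coefficients are positive for soft reasons — the leading one is the normalized volume, the constant term is $1$, and the coefficient of $k^{d-1}$ is half the normalized surface area — so the difficulty lies in the intermediate coefficients. Since $i(\Po(M_1 \oplus M_2),k) = i(\Po(M_1),k)\, i(\Po(M_2),k)$ and a product of polynomials with positive coefficients has positive coefficients, part~(B) reduces to connected matroids. One is tempted to route through the $h^*$-vector using $i(\Po(M),k) = \sum_{j=0}^{d} h^*_j \binom{k+d-j}{d}$, but the polynomials $\binom{k+d-j}{d}$ do not themselves have nonnegative coefficients, so $\ve h^* \ge \ve 0$ is not enough and a finer argument is required. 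I would instead use the description of $\Po(M)$ as a generalized permutohedron, expand it as a signed sum of dilated simplices following Postnikov's formulas, and track the net contribution of the simplices to each Ehrhart coefficient, with the hypersimplex — whose coefficients have a manageable closed form — as the base case; Theorem~\ref{volume} makes it cheap to test, along the way, whether after normalization the coefficients themselves behave like Ehrhart counting functions of smaller matroidal objects, which would give an inductive scheme.

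The reason both parts remain conjectures, and where I expect the real difficulty, is that neither reduces to a known sufficient condition. For~(A), the Ehrhart ring of $\Po(M)$ is frequently \emph{not} Gorenstein — De~Negri and Hibi~\cite{Negri1997Gorenstein-Alge} show this explicitly already for uniform matroids — so the classical Gorenstein route to symmetry and unimodality is closed, and constructing the right triangulation (or proving real-rootedness) is the obstacle. For~(B), Ehrhart positivity is genuinely rare among lattice polytopes and has no general structural cause, so any proof must exhibit a mechanism special to matroids — most plausibly a combinatorial formula for the Ehrhart coefficients in which each term is visibly nonnegative; producing such a formula, rather than extending the computational evidence we assemble in Section~\ref{sec:hstar}, is the crux.
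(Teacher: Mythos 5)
You are right that this is a conjecture, not a theorem, and you have treated it as such: you have sketched plausible lines of attack rather than claiming a proof, and you have correctly identified the central obstruction (the Ehrhart ring of $\Po(M)$ is often not Gorenstein, citing De~Negri--Hibi, so the classical route to symmetry and unimodality is unavailable). Your reduction of part~(B) to connected matroids via $i(\Po(M_1\oplus M_2),k)=i(\Po(M_1),k)\,i(\Po(M_2),k)$ is correct and matches an observation made in Section~\ref{sec:hstar}, and your caution that $h^*\ge 0$ does \emph{not} imply Ehrhart positivity (because $\binom{k+d-j}{d}$ has sign changes) is an important point that is easy to miss.

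Where your proposal diverges from the paper is in the concrete machinery used to extract partial results. You propose structural routes — integer decomposition property and Cohen--Macaulayness for $M$-sequence constraints, unimodular triangulations with shellable $h$-vectors, real-rootedness of $h^*$, and for~(B) a signed Postnikov-type decomposition of $\Po(M)$ as a generalized permutohedron. The paper instead leans on Katzman's explicit Veronese-algebra formulas (Corollary~2.9 of \cite{Katzman:math0408038}): it introduces the coefficients $A_i^{n,r}$ given by $(1+T+\cdots+T^{r-1})^n=\sum_i A_i^{n,r}T^i$, proves symmetry and unimodality of the vector $\ve A_\cdot^{n,r}$ by an elementary sliding-window argument, and then manipulates the resulting closed form for $h^*(\Po(U^{r,n}))$ directly. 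This is what yields the paper's actual partial results (Theorem~\ref{partialuni}): positivity of Ehrhart coefficients for $U^{2,n}$, unimodality of $h^*$ for $U^{2,n}$, and partial unimodality (non-decreasing up to a fixed index for $n$ large) for $U^{3,n}$, supplemented by machine verification up to $n=75$. Your proposed real-rootedness route is strictly stronger than what the paper establishes and would be a genuine advance if carried through; your Postnikov decomposition idea for~(B) is likewise untested in the paper and faces the difficulty, which you do not resolve, that the decomposition carries signs whose cancellation would have to be controlled termwise. In short: same honest assessment of the conjecture's status, but a genuinely different — more structural, less formula-driven — proposed toolkit than the Katzman-coefficient calculus the paper actually deploys for its partial results.
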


We have proved both parts of this conjecture in many instances. For
example, using computers, we were able to verify Conjecture
\ref{unimodalconj} for all uniform matroids up to $75$ elements as
well as for a wide variety of non-uniform matroids which are collected at
\cite{HawsMatroid-Polytop}.  We include here this information just
for the $28$ famous matroids presented in
\cite{Oxley1992Matroid-Theory}. Results in \cite{Katzman:math0408038},
with some additional careful calculations, imply that Conjecture
\ref{unimodalconj} is true for all rank $2$ uniform matroids.
Regarding part (A) of the conjecture we were also able to prove
\emph{partial unimodality} for uniform matroids of rank $3$, meaning
that the vector is non-decreasing up to a non-negative index, for
large enough $n$. Concretely we obtain:

\begin{theorem} \label{partialuni} \mbox{}
\begin{itemize}
\item[(1)] Conjecture \ref{unimodalconj} is true for all uniform
matroids up to $75$ elements and all uniform matroids of rank 2.
It is also true for all matroids listed in \cite{HawsMatroid-Polytop}.

\item[(2)] Let $\Po(U^{3,n})$ be the matroid polytope of a uniform
            matroid of rank $3$ on $n$ elements, and let $I$ be a non-negative
            integer.  Then there exists $n(I) \in \mathbb{N}$ such that for all
            $n \geq n(I)$ the $h^*$-vector of $\Po(U^{3,n})$, $\left(
                    h^*_0,\ldots, h^*_{n} \right)$, is non-decreasing
            from index 0 to $I$. That is, $ h^*_{0} \leq h^*_{1} \leq \dots
            \leq h^*_{I}$. 
\end{itemize}
\end{theorem}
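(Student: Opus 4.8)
Part~(1) is a finite verification combined with the cited results. For each uniform matroid on at most $75$ elements, and for each matroid listed in \cite{HawsMatroid-Polytop}, one computes the Ehrhart polynomial of the corresponding matroid polytope --- for instance via the algorithm underlying Theorem~\ref{volume}, or with standard lattice-point software --- reads off the $h^*$-vector, and checks directly that it is unimodal and that the Ehrhart coefficients are positive. For the rank-$2$ uniform matroids one argues uniformly in~$n$: the polytope $\Po(U^{2,n})$ is the hypersimplex $\{\ve x\in[0,1]^n : \sum_i x_i = 2\}$, whose Ehrhart ring has an explicitly known Hilbert series by \cite{Katzman:math0408038}; from it one extracts closed formulas for the entries $h^*_i$ and for the coefficients of $i(\Po(U^{2,n}),k)$ as polynomials in~$n$ (and~$i$), and then verifies unimodality and positivity for every~$n$ by elementary estimates --- these being the ``careful calculations'' alluded to in the introduction.

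For part~(2), write $\Delta_{3,n} := \Po(U^{3,n})$, an integral polytope of dimension $n-1$. The plan is to study each entry $h^*_j(\Delta_{3,n})$ as a function of~$n$ with the index~$j$ held fixed; the genuine difficulty --- proving unimodality of the whole $h^*$-vector rather than just its head --- is thereby sidestepped and left as Conjecture~\ref{unimodalconj}(A). First I would use the standard inversion of the Ehrhart series, $h^*_j(\Po) = \sum_{l=0}^{j}(-1)^{j-l}\binom{d+1}{j-l}\,i(\Po,l)$ for a $d$-dimensional integral polytope~$\Po$, which here reads
\[
  h^*_j(\Delta_{3,n}) = \sum_{l=0}^{j}(-1)^{j-l}\binom{n}{j-l}\,i(\Delta_{3,n},l).
\]
Next I would evaluate $i(\Delta_{3,n},l)$, either by specializing Katzman's Hilbert-series formula to rank~$3$ or directly by inclusion--exclusion applied to $\bigl((1-z^{l+1})/(1-z)\bigr)^{n}$: only the terms with $j(l+1)\le 3l$, i.e.\ $j\le 2$, contribute to the coefficient of $z^{3l}$, so that for $l\ge 1$
\[
  i(\Delta_{3,n},l) = \binom{3l+n-1}{n-1} - n\binom{2l+n-2}{n-1} + \binom{n}{2}\binom{l+n-3}{n-1},
\]
while $i(\Delta_{3,n},0)=1$. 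For fixed~$l$ these three summands are polynomials in~$n$ of degrees $3l$, $2l$, and $l$ respectively, so $i(\Delta_{3,n},l)$ is a polynomial in~$n$ of degree exactly $3l$ with leading coefficient $1/(3l)!$.

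Substituting this into the expression for $h^*_j$, the summand indexed by $l=j-m$ is a polynomial in~$n$ of degree $m + 3(j-m) = 3j-2m$, so the summand $l=j$ dominates: for $j\ge 1$, and for all~$n$ large enough that the closed form for $i(\Delta_{3,n},l)$ has stabilized, $h^*_j(\Delta_{3,n})$ is a polynomial in~$n$ of degree exactly $3j$ with leading coefficient $1/(3j)! > 0$, whereas $h^*_0(\Delta_{3,n}) = 1$. Hence, for each $j\ge 1$, the difference $h^*_j(\Delta_{3,n}) - h^*_{j-1}(\Delta_{3,n})$ is, for~$n$ large, a polynomial in~$n$ of degree $3j > \deg h^*_{j-1}$ with positive leading coefficient, so it is positive for all $n\ge n_j$ with $n_j$ a suitable threshold. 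Given~$I$, set $n(I) := \max\{n_1,\dots,n_I\}$; then for $n\ge n(I)$ one has $1 = h^*_0 \le h^*_1 \le \dots \le h^*_I$, and in particular all these entries are positive (so the index~$I$ indeed lies within the $h^*$-vector), which is the assertion.

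The argument is elementary, and the only points requiring care are of a bookkeeping nature. One must confirm that the top-degree terms of the summands $l=j-m$ with $m\ge 1$ do not cancel the $l=j$ contribution --- which they cannot, their degrees $3j-2m$ being pairwise distinct and all strictly smaller than $3j$ --- so that the leading coefficient $1/(3j)!$ of $h^*_j$ is genuinely the one claimed; and one must make explicit how large~$n$ must be before the closed form for $i(\Delta_{3,n},l)$, and hence the polynomiality of $h^*_j$ in~$n$, is valid, since this bound together with the finitely many thresholds $n_1,\dots,n_I$ is exactly what feeds into $n(I)$. This is the only real ``obstacle'' in the proof.
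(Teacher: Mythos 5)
Your proof of Part~(2) is correct and reaches the same structural conclusion as the paper --- namely that for fixed $l$, $h^*_l(\Po(U^{3,n}))$ is (for $n$ large) a polynomial in $n$ of degree exactly $3l$ with positive leading coefficient, so that successive differences $h^*_l - h^*_{l-1}$ are eventually positive --- but it gets there by a genuinely different route. The paper invokes Katzman's explicit formula for the $h^*$-polynomial (his Corollary 2.9), simplifies it via properties of the ``Katzman coefficients'' $A_i^{n,r}$ defined by $(1+T+\cdots+T^{r-1})^n = \sum A_i^{n,r}T^i$, arrives at $h^*_l = A_{3l}^{n,3} - n\binom{n}{2l-1} + \delta_2(l)\binom{n}{2}$, and then proves separately (via the identity $A_g^{n,3}=\sum_{k+p=g}\binom{n}{k}\binom{k}{p}$) that $A_{3l}^{n,3}$ has degree $3l$ in $n$. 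You instead start from the Ehrhart counting function $i(\Delta_{3,n},l)$ (your inclusion--exclusion formula is exactly Katzman's Corollary 2.2 specialized to $r=3$, quoted as equation~\eqref{eq:uniehrhart} in the paper), feed it into the standard inversion $h^*_j = \sum_{l=0}^j(-1)^{j-l}\binom{n}{j-l}i(\Po,l)$, and note that the $l=j$ summand has degree $3j$ while the $l=j-m$ summand has the strictly smaller degree $3j-2m$ for $m\geq1$. Your version is more elementary and self-contained: it bypasses the entire Katzman-coefficient apparatus (Definition~\ref{def:kat}, Lemmas~\ref{lem:dimrel}, \ref{lem:rankrel}, \ref{lem:katsymuni}) at the cost of the one bookkeeping check you correctly flag about the degrees in the inversion sum being pairwise distinct. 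One tiny refinement: the closed form for $i(\Delta_{3,n},l)$ you wrote holds identically for all $n\geq1$ and $l\geq0$ (the $j=2$ binomial vanishes automatically for small $l$), so ``stabilization'' of the formula is not actually an issue; the only role of $n(I)$ is to clear the finitely many polynomial-positivity thresholds and to ensure $n-1\geq I$ so that the indices $0,\dots,I$ lie within the $h^*$-vector.
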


\section{Computing the Ehrhart Polynomials} \label{sec:compehrhart}

Generating functions are crucial to proving our main results.  Let
$\Po \subseteq \R^n$ be a rational polyhedron. The \emph{multivariate
  generating function} of $\Po$ is defined as the formal Laurent series in $\Z[[z_1,\ldots,z_n,z_1^{-1},\ldots,z_n^{-1}]]$
\begin{equation*}
\tilde g_\Po ( \mathbf{z} ) = \sum_{\ve \alpha \in \Po \cap \Z^n} \mathbf{z}^\alpha ,
\end{equation*}
where we use the multi-exponent notation $\ve z^\vealpha =
\prod_{i=1}^n z_i^{\alpha_i}$.  If $\Po$ is bounded, $\tilde g_\Po$ is a
Laurent polynomial, which we consider as a rational function~$g_\Po$.
If $\Po$ is not bounded but is pointed (i.e., $\Po$ does not contain a
straight line), there is a non-empty open subset $U\subseteq\C^n$ such
that the series converges absolutely and uniformly on every compact
subset of~$U$ to a rational function~$g_\Po$ (see \cite{barvinok:99} and
references therein).  If $\Po$ contains a straight line, we set $g_\Po =
0$.  The rational function $g_\Po\in\Q(z_1,\dots,z_n)$ defined in this
way is called the \emph{multivariate rational generating function}
of~$\Po\cap\Z^n$.  Barvinok \cite{bar} proved that in polynomial time, when the
dimension of a polytope is fixed, $g_\Po$ can be represented
as a short sum of rational functions 
\begin{equation*}
 g_\Po(\ve z) =  \sum_{i \in I} \epsilon_i \frac{\ve z^{\ve a_{i}}}{\prod_{j=1}^{n} (1 - \ve z^{\ve b_{ij}} )},
\end{equation*}
where $\epsilon_i \in \{-1,1\}$.
Our first contribution is to show that in
the case of matroid polytopes of fixed rank, this still holds even
when their dimension grows.


\subsection{On the Tangent Cones of Matroid Polytopes} \label{subsec:ontangent}

Our goal is to compute the multivariate generating function of matroid
polytopes and independence matroid polytopes with fixed rank (later,
in \autoref{subsec:polymatroids}, we
will deal with the case of polymatroids), and to do this we will use a
crucial property of adjacent vertices.  Let $\ve v$ be a vertex of
$\Po$. Define the \emph{tangent cone} or \emph{supporting cone} of
$\ve v$ to be
\begin{equation*}
    \Co_\Po(\ve v) := \left\{\, \ve v + \ve w \; | \; \ve v + \epsilon \ve w \in \Po \text{ for some } \epsilon > 0 \, \right\}.
\end{equation*}

To illustrate our techniques we will use a running example throughout this section.
\begin{example}[Matroid on $K_4$] \label{ex:k4}
    Let $K_4$ be the complete graph on $4$ vertices. Label the ${4 \choose 2} = 6$ edges with $\{1,\ldots,6\}$. Every graph induces a matroid on its edges where the bases are all spanning trees (spanning forests for disconnected graphs) \cite{Welsh1976Matroid-Theory}. Let $M(K_4)$ be the matroid on the elements $\{1,\ldots,6\}$ with bases as all spanning trees of $K_4$. The rank of $M(K_4)$ is the size of any spanning tree of $K_4$, thus the rank of $M(K_4)$ is $3$. The $16$ bases of $M(K_4)$ are: 
$\{3, 5, 6\}$, $\{3, 4, 6\}$, $\{3, 4, 5\}$, $\{2, 5, 6\}$, $\{2, 4, 6\}$, $\{2, 4, 5\}$, $\{2, 3, 5\}$, $\{2, 3, 4\}$, $\{1, 5, 6\}$, $\{1, 4, 6\}$, $\{1, 4, 5\}$, $\{1, 3, 6\}$, $\{1, 3, 4\}$, $\{1, 2, 6\}$, $\{1, 2, 5\}$, $\{1, 2, 3\}$.
\end{example}

\begin{lemma}[See Theorem 4.1 in \cite{Gelfand1987Combinatorial-g}, Theorem 5.1 and Corollary 5.5 in \cite{Topkis1984Adjacency-on-Po}] \label{lem:adj}
    Let $M$ be a matroid. 
    \begin{itemize}
    \item[A)]   Two vertices $\ve e_{B_1}$ and $\ve e_{B_2}$ are adjacent in $\Po(M)$ if and only if $\ve e_{B_1} - \ve e_{B_2} = \ve e_i - \ve e_j$ for some $i,j$.
    \item[B)]   If two vertices $\ve e_{I_1}$ and $\ve e_{I_2}$ are adjacent
      in $\Po^\In (M)$ then $\ve e_{I_1} - \ve e_{I_2} \in \{ \, \ve e_i - \ve
      e_j, \, \ve e_i, \, -\ve e_j \, \}$ for some $i,j$. Moreover if $\ve v$
      is a vertex of $\Po^\In (M)$ then all adjacent vertices of $\ve v$ can
      be computed in polynomial time in $n$, even if the matroid $M$ is only
      presented by an evaluation oracle of its rank function~$\varphi$.
    \end{itemize}
\end{lemma}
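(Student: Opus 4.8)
The plan is to prove the two "adjacency'' implications by exhibiting, for each allowed pair, a linear functional whose maximizing face on the polytope is exactly the corresponding segment, and to prove the structural "only if''/containment implications by the midpoint obstruction: if the midpoint of a segment $[\ve u,\ve v]$ between two vertices is also a proper convex combination of two \emph{other} vertices, then $[\ve u,\ve v]$ is not a face. For part~A), given bases $B_1,B_2$ with $B_1\setminus B_2=\{i\}$ and $B_2\setminus B_1=\{j\}$, I would take $\ve c$ with $c_k=1$ for $k\in B_1\cap B_2$, $c_i=c_j=\tfrac12$, and $c_k=0$ otherwise; since every basis has exactly $\rank(M)$ elements, a one-line count gives $\langle\ve c,\ve e_B\rangle\le\rank(M)-\tfrac12$ for all bases $B$, with equality precisely for $B\in\{B_1,B_2\}$, so $\conv\{\ve e_{B_1},\ve e_{B_2}\}$ is an edge. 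Conversely, if $\ve e_{B_1}$ and $\ve e_{B_2}$ are adjacent but $|B_1\triangle B_2|\ge 4$, pick $i\in B_1\setminus B_2$ and use the symmetric exchange property (see \cite{Oxley1992Matroid-Theory}) to get $j\in B_2\setminus B_1$ with $B_3:=(B_1\setminus i)\cup j$ and $B_4:=(B_2\setminus j)\cup i$ both bases; then $\ve e_{B_3}+\ve e_{B_4}=\ve e_{B_1}+\ve e_{B_2}$ and $B_3,B_4\notin\{B_1,B_2\}$, contradicting that the segment is a face. (This recovers Theorem~4.1 of \cite{Gelfand1987Combinatorial-g}.)

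For the containment in part~B), recall that the vertices of $\Po^\In(M)$ are exactly the $\ve e_I$ with $I$ independent. Write $I_1=S\sqcup P$, $I_2=S\sqcup Q$ with $P=I_1\setminus I_2$ and $Q=I_2\setminus I_1$. If $\ve e_{I_1}-\ve e_{I_2}$ is none of $\ve e_i-\ve e_j$, $\ve e_i$, $-\ve e_j$, then either one of $P,Q$ is empty while the other has two elements, or $|P|+|Q|\ge 3$. In the first case, say $I_1=I_2\cup\{p_1,p_2\}$: the sets $I_2\cup p_1$ and $I_2\cup p_2$ are independent (being subsets of $I_1$) and satisfy $\ve e_{I_2\cup p_1}+\ve e_{I_2\cup p_2}=\ve e_{I_1}+\ve e_{I_2}$, so the midpoint of $[\ve e_{I_1},\ve e_{I_2}]$ is a proper convex combination of two other vertices and the vertices are not adjacent. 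The remaining cases $|P|+|Q|\ge 3$ are handled the same way once a suitable matroid exchange between $I_1$ and $I_2$ produces the two replacement independent sets; this is Theorem~5.1 of \cite{Topkis1984Adjacency-on-Po}, which I would invoke.

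Granting the containment, the algorithmic statement is elementary. The neighbors of a vertex $\ve v=\ve e_I$ lie among the $O(n^2)$ sets $I\cup\{i\}$, $I\setminus\{j\}$, and $(I\setminus\{j\})\cup\{i\}$, and each such set is a vertex of $\Po^\In(M)$ exactly when the rank oracle reports $\varphi(\cdot)=|\cdot|$, one query per candidate. It then remains to test actual adjacency, which I would do with explicit functionals analogous to the one used for part~A (positive weight on the elements kept fixed, small positive weight on the moved ones, negative weight on the rest): one checks that $\ve e_I$ is always adjacent to $\ve e_{I\cup i}$ and to $\ve e_{I\setminus j}$ whenever these are vertices, and that $\ve e_I$ is adjacent to $\ve e_{(I\setminus j)\cup i}$ if and only if $(I\setminus j)\cup i$ is independent and $I\cup i$ is \emph{not} independent --- if $I\cup i$ were independent, the midpoint of $[\ve e_I,\ve e_{(I\setminus j)\cup i}]$ would equal $\tfrac12(\ve e_{I\setminus j}+\ve e_{I\cup i})$. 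Every test is a bounded number of rank-oracle calls, so the entire adjacency list of $\ve v$ is produced in time polynomial in $n$; this is Corollary~5.5 of \cite{Topkis1984Adjacency-on-Po}.

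The step I expect to be the main obstacle is the containment in part~B) for large symmetric difference $|I_1\triangle I_2|\ge 3$: producing the two replacement independent sets requires an exchange between independent sets of possibly different sizes while keeping \emph{both} resulting sets independent, and this is the one place without a clean one-line argument, so I would lean on \cite{Topkis1984Adjacency-on-Po} there. Everything else --- the "if'' directions for both polytopes and, given the containment, the polynomial-time enumeration of neighbors --- follows the two-technique template above.
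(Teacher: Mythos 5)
The paper itself does not prove this lemma; it is stated as a citation of Theorem~4.1 in \cite{Gelfand1987Combinatorial-g} and Theorem~5.1 / Corollary~5.5 in \cite{Topkis1984Adjacency-on-Po}, so there is no in-paper argument to compare against. Judged on its own terms, your proposal is correct and is essentially the argument those references give: a supporting linear functional certifying adjacency, and a midpoint decomposition certifying non-adjacency.

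A few small points worth recording. In the ``only if'' direction of A) you rely on the \emph{symmetric} basis exchange theorem (for $i\in B_1\setminus B_2$ there exists $j\in B_2\setminus B_1$ with \emph{both} $(B_1\setminus i)\cup j$ and $(B_2\setminus j)\cup i$ bases); the basic exchange axiom alone only guarantees one of the two, so it is important that you cite the stronger version, which is indeed in \cite{Oxley1992Matroid-Theory}. In the containment of B), your case split is exhaustive and the $\{|P|,|Q|\}=\{0,2\}$ case is handled cleanly; you are right that the mixed cases with $|P|+|Q|\ge3$ need the submodularity machinery of Topkis rather than a one-line exchange, and deferring to Theorem~5.1 there is exactly what the paper does. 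For the algorithm, your three adjacency tests are correct, and the functional certificates you allude to do work: e.g.\ for the exchange neighbor $(I\setminus j)\cup i$, weight $1$ on $I\setminus j$, weight $1/2$ on $i$ and $j$, and weight $-1$ elsewhere, is maximized (among incidence vectors of independent sets) precisely at $\ve e_I$ and $\ve e_{(I\setminus j)\cup i}$ when $I\cup i$ is dependent; and the necessity of ``$I\cup i$ dependent'' is exactly your midpoint identity $\tfrac12(\ve e_I+\ve e_{(I\setminus j)\cup i})=\tfrac12(\ve e_{I\setminus j}+\ve e_{I\cup i})$. Each test is $O(1)$ oracle calls over $O(n^2)$ candidates, matching Corollary~5.5.
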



Let $M$ be a matroid on $n$ elements with fixed rank $r$. Then the number of
vertices of $\Po(M)$ is polynomial in $n$. We can see this since the number of
vertices is equal to the number of bases of $M$, and the number of bases is
bounded by $n \choose r$, a polynomial of degree $r$ in $n$. Clearly the number of
vertices of $\Po^\In (M)$ is also polynomial in~$n$.  It is also clear that, 
when the rank~$r$ is fixed, 
all vertices of either polytope can be enumerated in polynomial time in~$n$,
even when the matroid is only presented by an evaluation oracle for its rank
function~$\varphi$.

Throughout this section we shall discuss polyhedral cones $\Co$ with
extremal rays $\{\ve r_1,\ldots,\ve r_l\}$ such that 
\begin{equation*}
    \ve r_k \; \in \; R_A := \{ \, \ve e_i - \ve e_j, \, \ve e_i, \, -\ve e_j \; | \;  i \in [n], \; j \in A \, \} \quad \text{for } k=1,\ldots,l
\end{equation*}
for some $A \subseteq [n]$. We will refer to $R_A$ as the
\emph{elementary set of $A$}. Note that by \autoref{lem:adj}
the rays of a tangent cone at a vertex $\ve e_A$ (corresponding to a set
$A\subseteq[n]$) of a matroid polytope or an independence matroid polytope form an
elementary set of $A$. Due to convexity and the
assumption that $\ve r_k$ are extremal, for each $i \in [n]$ and $j \in A$ at
most two of the three vectors $\ve e_i - \ve e_j, \ve e_i, - \ve e_j$ are
extremal rays $\ve r_k$ of $\Co$.  This implies by construction, that
    considering all pairs $\ve e_i - \ve e_j$ and $\ve e_i$ or $-\ve
    e_j$, the number of generators $\ve r_k$ of $\Co$ is bounded by
\begin{equation} \label{eq:genbnd}
n|A| + n + |A|.
\end{equation}

Recall a cone is \emph{simple} if it is generated by linearly independent vectors and
it is \emph{unimodular} if its fundamental parallelepiped contains only $\ve 0$
from $\Z^n$ \cite{1997Handbook-of-dis}.  A triangulation of $\Co$ is
\emph{unimodular} if it is a polyhedral subdivision such that each sub-cone is
unimodular.

\begin{figure}[!htb]
    \centerline{
    \ifpdf
    \input{K4adj.pdf_t}
    \else
    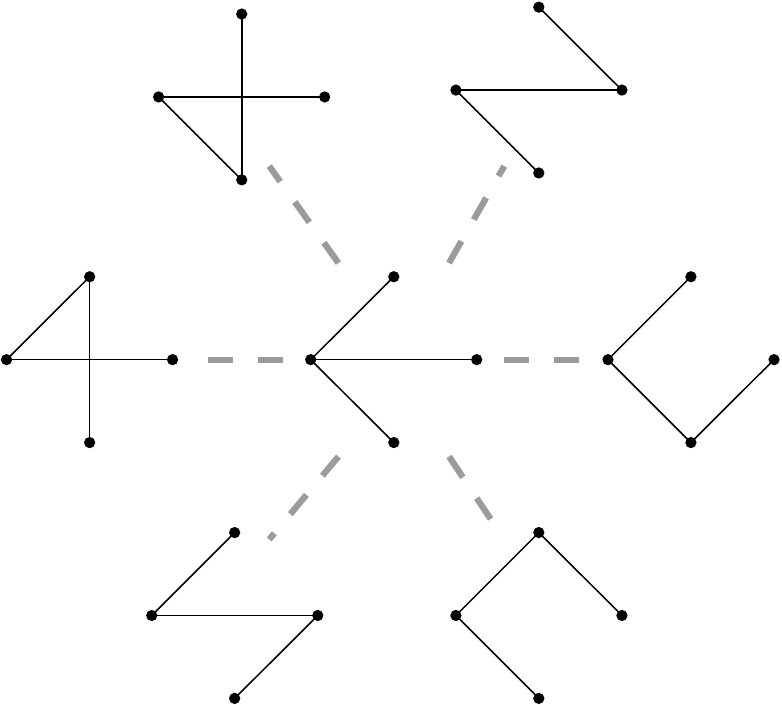
    \fi
    }
    \caption{${\{2, 3, 5\}}$, ${\{2, 3, 4\}}$, ${\{1, 3, 6\}}$, ${\{1, 3, 4\}}$, ${\{1, 2, 6\}}$ and ${\{1, 2, 5\}}$ are spanning trees of $K_4$ that differ from ${\{1, 2, 3\}}$ by adding one edge and removing one edge.}\label{fig:K4adj}
\end{figure}
\begin{example}[Matroid on $K_4$]
    The vertices $\ve e_{\{2, 3, 5\}}$, $\ve e_{\{2, 3, 4\}}$, $\ve e_{\{1, 3, 6\}}$, $\ve e_{\{1, 3, 4\}}$, $\ve e_{\{1, 2, 6\}}$ and $\ve e_{\{1, 2, 5\}}$ are all adjacent to the vertex $\ve e_{\{1, 2, 3\}}$, see \autoref{fig:K4adj}. Moreover, the tangent cone $\Co_{\Po(M(\ve K_4))}(\ve e_{\{1, 2, 3\}})$ is generated by the differences of these vertices with $\ve e_{\{1, 2, 3\}}$:
\begin{multline*}
    \Co_{\Po(M(\ve K_4))}(\ve e_{\{1, 2, 3\}}) = \ve e_{\{1, 2, 3\}} + \cone \left\{\ve e_{\{2, 3, 5\}}-\ve e_{\{1, 2, 3\}}, \ve e_{\{2, 3, 4\}}-\ve e_{\{1, 2, 3\}}, \right. \\
           \left. \ve e_{\{1, 3, 4\}}-\ve e_{\{1, 2, 3\}},\ve e_{\{1, 2, 6\}}-\ve e_{\{1, 2, 3\}},\ve e_{\{1, 2, 5\}}-\ve e_{\{1, 2, 3\}} \right\}
\end{multline*}
\end{example}

\begin{lemma} \label{lem:eluni}
    Let $\Co \subseteq \R^n$ be a cone generated by $p$ extremal rays $\{\ve
    r_1,\ldots,\ve r_p\} \subseteq R_A$ where $R_A$ is an elementary set of
    some $A \subseteq [n]$. Every triangulation of $\Co$ is unimodular.
\end{lemma}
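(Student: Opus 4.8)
The plan is to prove the stronger statement that \emph{every} linearly independent subset of $R_A$ generates a unimodular cone. This suffices: in any triangulation of $\Co$ each full-dimensional piece is a simple cone whose generators form a linearly independent subset of the extremal rays $\{\ve r_1,\dots,\ve r_p\}\subseteq R_A$, so each piece is then unimodular. Here ``triangulation of $\Co$'' is understood in the usual sense of a simplicial subdivision that introduces no rays beyond $\ve r_1,\dots,\ve r_p$; this restriction is genuinely needed, since a ray such as $\ve e_i-2\ve e_j$ can lie in $\Co$ without belonging to $R_A$ and can occur in a non-unimodular simple cone.

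The crux is the observation that $R_A$ is, up to the zero vector, the set of columns of a totally unimodular matrix. Adjoin an auxiliary index $0$ and let $D$ be the directed graph on node set $\{0\}\cup[n]$ with an arc $(0,i)$ for each $i\in[n]$, an arc $(j,0)$ for each $j\in A$, and an arc $(j,i)$ for each $i\in[n]$ and $j\in A$. Under the correspondence $\ve e_i\leftrightarrow(0,i)$, $-\ve e_j\leftrightarrow(j,0)$, and $\ve e_i-\ve e_j\leftrightarrow(j,i)$, the vectors of $R_A\setminus\{\ve 0\}$ are exactly the columns of the node--arc incidence matrix of $D$ after deleting the row indexed by $0$. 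Node--arc incidence matrices of directed graphs are totally unimodular (see, e.g., \cite{Schrijver2003Combinatorial-O}), and deleting rows preserves this; hence the matrix $N$ whose columns are the vectors of $R_A$ is totally unimodular.

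To finish I would invoke the standard fact that a linearly independent family of columns of a totally unimodular matrix $N\in\Z^{n\times m}$ is a $\Z$-basis of the lattice $\Z^n\cap\lin(\text{those columns})$: choosing rows so that the corresponding square submatrix $N_S$ of the selected columns is nonsingular (possible, since the columns have full column rank), total unimodularity gives $\det N_S=\pm1$, so $N_S^{-1}$ is integral, and hence any integral vector in the linear span of the selected columns has integral coefficients with respect to them. Therefore the fundamental parallelepiped of the simple cone generated by any linearly independent subset of $R_A$ meets $\Z^n$ only in $\ve 0$, i.e., the cone is unimodular; applying this to every piece of a triangulation of $\Co$ proves the lemma. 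I expect the only non-routine step to be recognizing the network-matrix (hence totally unimodular) structure of $R_A$; once that is in place, the argument is a short application of classical facts about totally unimodular matrices.
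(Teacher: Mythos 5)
Your proof is correct and follows essentially the same approach as the paper: both establish total unimodularity of the generator matrix by realizing it as (a submatrix of) the incidence matrix of a directed graph, and then conclude that every simplicial sub-cone of a triangulation (one using only the given rays) is unimodular. The one worthwhile variation is how the columns $\ve e_i$ and $-\ve e_j$ are handled. The paper first forms the incidence matrix of a digraph on node set $[n]$ using only the $\ve e_i-\ve e_j$ generators, and then argues by a cofactor-expansion observation that appending unit-vector columns preserves total unimodularity; your auxiliary node~$0$ instead makes all three generator types into honest incidence-matrix columns at once, so the two-step augmentation disappears. This is a cleaner and more uniform way to recognize the network structure of~$R_A$. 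Your explicit caveat about what ``triangulation'' must mean here (a subdivision introducing no rays outside $\{\ve r_1,\dots,\ve r_p\}$, illustrated by the $\ve e_i-2\ve e_j$ example) and your spelled-out justification that a linearly independent column set of a totally unimodular matrix is a $\Z$-basis of $\Z^n\cap\lin(\text{those columns})$ are both useful elaborations of points the paper asserts tersely.
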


\begin{proof}
Without loss of generality, we can assume $\{\ve r_1,\ldots,\ve r_l\}$
are generators of the form $\ve e_i - \ve e_j$ and $\{\ve r_{l+1},\ldots,\ve r_p\}$ are
generators of the form $\ve e_i$ or $-\ve e_j$ for the cone $\Co$. 

It is easy to see that the matrix $\tilde T_\Co := [\ve r_1,\ldots,\ve r_l]$ is totally
unimodular. Let $G_\Co$ be a directed graph with vertex set $[n]$ and
an edge from vertex $i$ to $j$ if $\ve r_k = \ve e_i - \ve e_j$ is an extremal ray of
$\Co$. We can see that $G_\Co$ is a subgraph of the complete directed graph $K_n$ with two arcs between each pair of vertices; one for each direction.
Since $\tilde T_\Co := [\ve r_1,\ldots,\ve r_l]$ is the incidence matrix of the graph $G_\Co$,
      it is totally unimodular
      \cite[Ch.~19,~Ex.~2]{Schrijver1986Theory-of-linea}, i.e., every subdeterminant is $0$, $1$ or $-1$
      \cite[Ch.~19, Thm.~9]{Schrijver1986Theory-of-linea}.  Therefore $T_\Co :=
      [\ve r_1,\ldots,\ve r_l,\ve r_{l+1},\ldots,\ve r_p]$ is totally unimodular since
      augmenting $\tilde T_\Co$ by a vector $\ve e_i$ or $-\ve e_j$ preserves this
      subdeterminant property: for any submatrix containing part of a vector
      $\ve e_i$ or $-\ve e_j$ perform the cofactor expansion down the vector $\ve e_i$ or
      $-\ve e_j$ when calculating the determinant.  

Since $T_\Co$ is totally unimodular, each basis of $T_\Co$ generates the entire integer
lattice $\Z^n \cap \lin(\Co)$ and hence every simplicial cone of a triangulation has
normalized volume $1$.
\end{proof}

\begin{lemma} \label{lem:aug}
    Let $\Co \subseteq \R^n$ be a cone generated by $l$ extremal
    rays $\{\ve r_1,\ldots,\ve r_l\} \subseteq R_A$ where $R_A$ is an elementary set of some $A \subseteq [n]$, where $\dim(\Co) < n$. The extremal rays
    $\{\ve r_1,\ldots,\ve r_l\}$ can be augmented by a vector $\ve{\tilde r}$ such that
    $\dim ( \cone \{ \ve r_1,\ldots,\ve r_l,\ve{\tilde r}\}) = \dim(\Co) +1$,
    the vectors
    $\ve r_1,\ldots,\ve r_l,\ve{\tilde r}$ are all extremal,
    and $\ve{\tilde r} \in R_A$.
\end{lemma}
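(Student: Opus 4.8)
The plan is to exploit a simple but decisive feature of elementary sets: by definition $R_A$ contains every standard basis vector $\ve e_i$, $i\in[n]$, so $\lin(R_A)=\R^n$. Since $\dim(\Co)<n$, the subspace $\lin(\Co)$ is proper in $\R^n$, hence $R_A\not\subseteq\lin(\Co)$, and we may choose any $\ve{\tilde r}\in R_A\setminus\lin(\Co)$. With this choice $\ve{\tilde r}\in R_A$ by construction, and since $\ve{\tilde r}$ is linearly independent of $\lin(\Co)$ we immediately get $\dim\bigl(\cone\{\ve r_1,\dots,\ve r_l,\ve{\tilde r}\}\bigr)=\dim(\Co)+1$. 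The remaining work is to show that $\ve r_1,\dots,\ve r_l,\ve{\tilde r}$ are all extremal rays of $\Co':=\cone\{\ve r_1,\dots,\ve r_l,\ve{\tilde r}\}$.

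For this I would first record two structural facts: every point of $\Co'$ has the form $\ve x+s\ve{\tilde r}$ with $\ve x\in\Co$ and $s\ge0$ (split a conic combination of the generators into its $\ve r_k$-part and its $\ve{\tilde r}$-part), and $\Co$ is pointed — a cone with a one-dimensional face has trivial lineality space — which, via the same splitting, forces $\Co'$ to be pointed as well. The key step is that $\Co$ is a face of $\Co'$: choose a linear functional $\phi$ on $\R^n$ that vanishes on $\lin(\Co)$ with $\phi(\ve{\tilde r})=1$ (possible precisely because $\ve{\tilde r}\notin\lin(\Co)$); then $\phi\ge0$ on all generators, hence on $\Co'$, and $\{\ve z\in\Co':\phi(\ve z)=0\}=\Co$, so $\Co$ is the face of $\Co'$ cut out by the supporting hyperplane $\{\phi=0\}$. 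By transitivity of the face relation every face of $\Co$ is then a face of $\Co'$; in particular each $\R_{\ge0}\ve r_k$ remains a one-dimensional face, so the $\ve r_k$ stay extremal.

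It remains to see that $\ve{\tilde r}$ itself is extremal in $\Co'$. If $\ve{\tilde r}=\tfrac12(\ve a+\ve b)$ with $\ve a=\ve x_a+s_a\ve{\tilde r}$ and $\ve b=\ve x_b+s_b\ve{\tilde r}$ in $\Co'$, then $\bigl(1-\tfrac{s_a+s_b}{2}\bigr)\ve{\tilde r}=\tfrac12(\ve x_a+\ve x_b)\in\lin(\Co)$ forces $s_a+s_b=2$ and $\ve x_a+\ve x_b=\ve0$, and pointedness of $\Co$ then gives $\ve x_a=\ve x_b=\ve0$; thus $\ve a,\ve b\in\R_{\ge0}\ve{\tilde r}$, so $\R_{\ge0}\ve{\tilde r}$ is a face of $\Co'$ and $\ve{\tilde r}$ is extremal.

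I do not expect a genuine obstacle here: the real content is the cheap observation that $R_A$ already spans $\R^n$, which hands us the augmenting ray for free, and everything else is routine pointed-cone bookkeeping. The only point to watch is that enlarging a cone can in principle destroy extremality of its existing rays — which is exactly the reason the ``$\Co$ is a face of $\Co'$'' argument is needed rather than being skipped.
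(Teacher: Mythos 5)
Your proof is correct, and it is more rigorous than the paper's own argument. Both proofs rest on the same pivot---that $R_A$ spans $\R^n$, so some member of $R_A$ lies outside $\lin(\Co)$---but the paper gets there through a slightly circuitous count of how many of the triple $\ve e_i-\ve e_j, \ve e_i, -\ve e_j$ can already be extremal in $\Co$, and then simply \emph{asserts}, with an appeal to ``basis augmentation,'' that the resulting $\ve r_1,\dots,\ve r_l,\ve{\tilde r}$ are all extremal. Your argument does two things better: the spanning claim follows immediately from $\{\ve e_1,\dots,\ve e_n\}\subseteq R_A$, and the preservation of extremality is actually proved---you exhibit a supporting functional $\phi$ that cuts $\Co$ out as a face of $\Co'=\cone\{\ve r_1,\dots,\ve r_l,\ve{\tilde r}\}$, so the one-dimensional faces of $\Co$ remain one-dimensional faces of $\Co'$ by transitivity of the face relation, and the separate pointedness computation handles $\ve{\tilde r}$ itself. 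This fills the one genuine gap in the paper's terse proof, namely why the original rays cannot lose extremality once the cone is enlarged; as you observe, it is precisely the ``$\Co$ is a face of $\Co'$'' step that rules this out, and it is not a formality.
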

\begin{proof}
    It follows from convexity that at most two of $\ve e_i-\ve e_j$, $\ve e_i$ or
    $-\ve e_j$ are extremal generators of $\Co$ for $i \in [n]$ and $j \in A$.
    There are at least $n$ possible extremal ray generators, considering two of
    $\ve e_i-\ve e_j$, $\ve e_i$ or $-\ve e_j$ for each $i \in [n]$ and $j \in A$.
    Moreover, all these pairs span $\R^n$. Thus by the basis augmentation theorem
    of linear algebra, there exists a vector $\ve{\tilde r}$ such that $\dim (
            \cone \{ \ve r_1,\ldots,\ve r_l,\ve{\tilde r}\} )  = \dim(\Co) +1$ and
    $\ve r_1,\ldots,\ve r_l,\ve{\tilde r}$ are all extremal. 
\end{proof}

\begin{lemma} \label{lem:elsimpbnd}
    Let $r$ be a fixed integer, $n$ be an integer, $A \subseteq [n]$ with $|A| \leq r$ and
    let $\Co \subseteq \R^n$ be a cone generated by $l$ extremal rays $\{\ve
    r_1,\ldots,\ve r_l\} \subseteq R_A$ where $R_A$ is an elementary set of $A$. Then any triangulation of $\conv(\{\ve 0, \ve
            r_1,\ldots,\ve r_l\})$ has at most a polynomial in $n$ number of
    top-dimensional simplices.
\end{lemma}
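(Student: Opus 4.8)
The plan is to bound the number of top-dimensional simplices by the normalized volume of the polytope $P:=\conv(\{\ve 0,\ve r_1,\dots,\ve r_l\})$ and then to show this normalized volume is polynomial in $n$. Write $\nu(Q)$ for the normalized volume of a lattice polytope $Q$ relative to the affine lattice it spans (so $\nu(Q)=n!\,\vol(Q)$ when $Q\subseteq\R^n$ is full-dimensional). Two easy facts come first. By the counting behind \eqref{eq:genbnd}, $l\le n\abs A+n+\abs A\le(r+1)n+r$, so $l=O(n)$. And in any triangulation of $P$ each top-dimensional simplex has its vertices among $\ve 0,\ve r_1,\dots,\ve r_l\in\Z^n$, so it is a lattice simplex of dimension $d:=\dim P$ and hence has normalized volume at least $1$ in the lattice $\Z^n\cap\affine(P)=\Z^n\cap\lin(\cone\{\ve r_1,\dots,\ve r_l\})$; since these simplices tile $P$, their number is at most $\nu(P)$. (By \autoref{lem:eluni} and total unimodularity the simplices are in fact unimodular and the count is exactly $\nu(P)$, but only the inequality is needed.)

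It therefore suffices to bound $\nu(P)$. If $\dim P<n$ I would iterate \autoref{lem:aug}: adjoining a vector $\ve{\tilde r}\in R_A$ that raises the dimension replaces $P$ by the pyramid $\conv(P\cup\{\ve{\tilde r}\})$ with apex $\ve{\tilde r}\notin\affine(P)$, whose normalized volume is $\nu(P)$ times the lattice distance of $\ve{\tilde r}$ from $\affine(P)$, a positive integer. Hence $\nu(P)$ never decreases under this operation, so I may assume $P=\conv(\{\ve 0\}\cup S)$ with $S\subseteq R_A$ and $\dim P=n$. Then $P\subseteq\bar P:=\conv(\{\ve 0\}\cup R_A)$ and $\dim\bar P=n$, so $\nu(P)\le\nu(\bar P)=n!\,\vol(\bar P)$, and it remains to bound $\nu(\bar P)$.

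The heart of the argument is a fibration of $\bar P$ over its $A$-coordinates. Let $\pi\colon\R^n\to\R^A$ be the coordinate projection; then $D:=\pi(\bar P)=\conv(\{\ve 0\}\cup(R_A\cap\R^A))$ lies in $[-1,1]^A$ and has dimension $\abs A\le r$. The crucial claim is that for every $u\in D$ the fiber $\bar P\cap\pi^{-1}(u)$ is a dilated standard simplex $c(u)\cdot\Delta$, where $\Delta:=\conv(\{\ve 0\}\cup\{\ve e_i:i\in[n]\setminus A\})$ and $c(u)\in[0,1]$. This follows because $\bar P\subseteq\{x:x_i\ge0\ \forall i\notin A\}$, every vertex of $\bar P$ satisfies $\sum_{i\notin A}x_i\le1$, and — expanding a point of $\bar P$ in the generators $R_A$ and rearranging the weights among the $A$-coordinates by a transportation-type argument — one shows that membership $(u,y)\in\bar P$ (for $u\in D$, $y\ge0$) holds iff $\sum_{i\notin A}y_i\le1$ and $u$ lies in a convexly varying subset of $D$ that depends on $y$ only through $s:=\sum_{i\notin A}y_i$; hence the admissible $s$ form an interval $[0,c(u)]$ with $c(u)\le1$. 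Integrating over fibers,
\[
\vol(\bar P)=\int_D\frac{c(u)^{\,n-\abs A}}{(n-\abs A)!}\,du\le\frac{\vol(D)}{(n-\abs A)!}\le\frac{2^{\abs A}}{(n-\abs A)!},
\]
so $\nu(\bar P)=n!\,\vol(\bar P)\le 2^{\abs A}\,n(n-1)\cdots(n-\abs A+1)\le 2^{r}n^{r}$. Chaining the inequalities, any triangulation of $\conv(\{\ve 0,\ve r_1,\dots,\ve r_l\})$ has at most $2^{r}n^{r}$ top-dimensional simplices.

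I expect the fiber identification to be the one genuinely delicate point: it is exactly what converts the a priori dangerous factor $n!$ into the polynomial $n!/(n-\abs A)!=O(n^{r})$. Crude outer estimates such as $\bar P\subseteq[-1,1]^A\times[0,1]^{[n]\setminus A}$ only yield the useless bound $\nu(\bar P)\le n!\,2^{r}$, so one really must verify that over each point of $D$ the "roof" of $\bar P$ is a single dilated standard simplex, not some more complicated downward-closed set. The remaining ingredients — the reduction to $\nu(P)$, the dimension reduction through \autoref{lem:aug}, and the containment $P\subseteq\bar P$ — are routine.
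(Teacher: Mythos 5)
Your overall strategy matches the paper's: bound the number of top-dimensional simplices by the (normalized) volume of an enclosing region, handle the case $\dim\Co<n$ by iterating \autoref{lem:aug} (the pyramid observation you make is equivalent to the paper's remark that every full-dimensional simplex must contain the new apex), and observe that $l=O(n)$ by \eqref{eq:genbnd}. But the ``heart of the argument'' you propose is an unnecessary detour, and its key claim is not actually established. You assert that a crude outer estimate gives only the useless bound $\nu(\bar P)\le n!\,2^r$ because you take the outer set on the $[n]\setminus A$ coordinates to be the cube $[0,1]^{[n]\setminus A}$. The paper instead notes the elementary containment
\[
  \conv\{\ve 0,\ve r_1,\dots,\ve r_l\}\;\subseteq\;[-1,1]^A\times\tilde\Delta_{[n]\setminus A},
  \qquad
  \tilde\Delta_{[n]\setminus A}:=\conv\bigl(\{\ve 0\}\cup\{\ve e_i: i\in[n]\setminus A\}\bigr),
\]
which holds trivially: each generator $\ve r_k\in R_A$ has $A$-coordinates in $[-1,1]$, nonnegative $[n]\setminus A$-coordinates, and at most one coordinate equal to $1$ outside $A$, so a convex combination of $\ve 0$ and the $\ve r_k$ has $\sum_{i\notin A}x_i\le 1$. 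This already yields $\vol(P)\le 2^{|A|}/(n-|A|)!$, hence the bound $2^r n(n-1)\cdots(n-r+1)$ on the number of simplices, with no need to identify the fibers exactly. Once you replace the cube by the simplex on the $[n]\setminus A$ block, the outer estimate is not crude at all.

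As for the fiber identification itself: the statement that over every $u\in D$ the fiber of $\bar P$ is exactly a dilated standard simplex $c(u)\,\Delta$ is plausible but you only gesture at it (``by a transportation-type argument\dots one shows''). If you were relying on it, this would be a genuine gap; as it happens you only ever use the upper bound $\vol(\text{fiber})\le 1/(n-|A|)!$, and that one-sided bound is just the containment above restricted to a fiber, so the equality claim can be deleted entirely. Your proof is therefore correct after that adjustment and is, in substance, the paper's proof — the extra machinery (fibration, transportation argument, exact $c(u)$) buys nothing.
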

\begin{proof}

Assume $\dim(\Co) = n$. Later, we will show how to remove this restriction.
We can see that
$    \conv \{\ve  0, \ve r_1,\ldots,\ve r_l \} \; \subseteq \;  [-1,1]^{A} \times \tilde \Delta_{[n] \setminus A}$
where
\begin{align*}
    [-1,1]^{A} & :=  \{ \, \ve x \in \R^A \mid |\ve x_j| \leq 1 \ \ j \in A \, \}  \subseteq \R^A  \\
    \tilde \Delta_{[n] \setminus A} & := \conv ( \{ \, \ve e_i \; | \; i \in [n] \setminus A \, \} \cup \{\ve 0\} )  \subseteq \R^{[n] \setminus A}.
\end{align*}
The volume of a $d$-simplex $\conv\{\ve v_0,\ldots,\ve v_d\}$ is \cite{1997Handbook-of-dis}
\begin{equation}
    \label{eq:simpvol}
    \frac{1}{d!} \left| \det \left( \begin{array}{ccc} \ve v_0 & \cdots & \ve v_d \\ \ve 1 & \cdots & \ve 1 \end{array} \right) \right| .
\end{equation}
Thus the $(n-|A|)$-volume of $\tilde \Delta_{[n] \setminus A}$ is $ 
\frac{1}{(n-|A|)!}$ and the $|A|$-volume of $[-1,1]^{A}$ is $2^{|A|}$.
Therefore
\begin{equation*}
    \vol \big( [-1,1]^{A} \times \tilde \Delta_{[n] \setminus A} \big) =
    2^{|A|} \frac{1}{(n-|A|)!} = \frac1{n!} 2^{|A|} n(n-1) \cdots (n-|A| +1) .
\end{equation*}
It is also a fact that any integral $n$-simplex has $n$-volume bounded below by
$\frac{1}{n!}$, using the simplex volume equation \eqref{eq:simpvol}. Therefore
any triangulation of $\conv \{ \ve 0,\ve r_1,\ldots,\ve r_l \}$ has at most 
\begin{equation*}
  2^{|A|} n(n-1) \cdots (n-|A| +1) \leq
  2^{r} n(n-1) \cdots (n-r +1)
\end{equation*}
full-dimensional simplices, a polynomial function in~$n$ of degree~$r$.

Let $d_\Co := n-\dim( \Co )$. If $\dim(\Co) < n$, then by Lemma \ref{lem:aug}, $\{\ve r_1,\ldots,\ve r_l\}$ can
be augmented with vectors $\{\ve{\tilde r}_1, \ldots, \ve{\tilde r}_{d_\Co} \}$ where $ \ve{\tilde
r}_k \in R_A$ for $A$ above, such that $\dim( \cone \{ \ve r_1,\ldots,\ve r_l,\allowbreak \ve{\tilde r}_1, \ldots, \ve{\tilde r}_{d_\Co} \} ) = n$ and $\{
\ve r_1,\ldots,\ve r_l,\ve{\tilde r}_1, \ldots, \ve{\tilde r}_{d_\Co} \}$ are extremal. Moreover,
\begin{equation*}
    \begin{array}{c}
    \dim ( \conv \{\ve 0,\ve r_1,\ldots,\ve r_l\} ) < \dim ( \conv \{\ve 0, \ve r_1,\ldots,\ve r_l,\ve{\tilde r}_1\} ) < \cdots  \\
    < \dim ( \conv \{\ve 0 ,\ve  r_1,\ldots,\ve  r_l,\ve{\tilde r}_1,\ldots,
    \ve{\tilde r}_{d_\Co -1}\} ) 
    < \dim ( \conv \{\ve 0,\ve r_1,\ldots,\ve r_l,\ve{\tilde r}_1,\ldots, \ve{\tilde r}_{d_\Co}\} ),
    \end{array}
\end{equation*}
that is, $\ve{\tilde r}_k \notin \affine \{\ve 0, \ve r_1,\ldots,\ve r_l, \ve{\tilde r}_1, \ldots, \ve{\tilde r}_{k-1}\}$ for $1 \leq k \leq d_\Co$. 

\begin{figure}[!htb]
    \centerline{ \ifpdf
    \input{lemnsimp.pdf_t}
    \else
    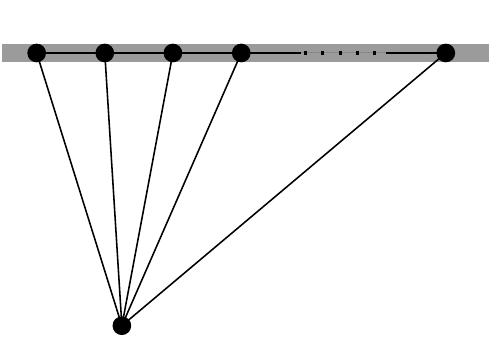
    \fi }
    \caption{If $\ve{\tilde r}_k$ is not contained in the affine span of
      $\{\ve 0, \ve r_1,\ldots,\ve r_p, \ve{\tilde r}_1,\ldots, \ve{\tilde
        r}_{k-1}\}$ then every full-dimensional simplex must contain
      $\ve{\tilde r}_k$.}
    \label{fig:lemnsimp}
\end{figure}

Since $\ve{\tilde r}_k \notin \affine \{ \ve 0, \ve r_1,\ldots,\ve r_l,
        \ve{\tilde r}_1, \ldots, \ve{\tilde r}_{k-1} \} $, any full-dimensional
simplex in a triangulation of $\conv \{ \ve 0, \ve r_1,\ldots,\ve r_l,
        \ve{\tilde r}_1, \ldots, \ve{\tilde r}_k \} $ must contain $\ve{\tilde
    r}_k$, see Figure \ref{fig:lemnsimp}. If not, then there exists a
    top-dimensional simplex using the points $\{ \ve 0, \ve r_1,\ldots,\ve
    r_l, \allowbreak \ve{\tilde r}_1, \ldots, \ve{\tilde r}_{k-1} \}$, but we know all
    these points lie in a subspace of one less dimension, a contradiction. Therefore, a bound on
    the number of simplices in a triangulation of $\conv \{ \ve 0, \ve
            r_1,\ldots,\ve r_l, \ve{\tilde r}_1, \ldots, \ve{\tilde r}_k \}$ is
    a bound on that of $\conv \{ \ve 0, \ve r_1,\ldots,\ve r_l,\allowbreak \ve{\tilde r}_1,
            \ldots, \ve{\tilde r}_{k-1} \}$.  

Thus, if $\dim (\Co) < n$ we can augment $\Co$ by vectors $\ve{\tilde
r}_1,\ldots, \ve{\tilde r}_{d_\Co}$ so that the cone $\tilde \Co :=
\cone \{ \ve r_1,\ldots,\ve r_p,\ve{\tilde r}_1,\ldots, \ve{\tilde
r}_{d_\Co} \}$ is of dimension $n$ and $ \ve r_l, \ve{\tilde r}_k \;
\in \; R_A$ for $A$ above. We proved any triangulation of $\conv \{
\ve 0, \ve r_1,\ldots,\ve r_p,\ve{\tilde r}_1,\ldots, \ve{\tilde r}_{d_\Co} \}
$ has at most polynomially many full-dimensional $n$-simplices, which
implies that any triangulation of $\conv \{ \ve 0, \ve r_1,\ldots,\ve
r_p \}$ has at most polynomially many top-dimensional
simplices due to the construction of the generators
$\ve{\tilde r}_k$.
\end{proof}

We have shown that for a cone $\Co$ generated by an elementary set of
extremal rays $\{\ve r_1,\ldots,\ve r_l\} \subseteq R_A$ for some $A
\subseteq [n]$, any triangulation of $\conv\{\ve 0,\ve r_1,\ldots,\ve
r_l\}$ has at most polynomially many simplices. What we need next is
an efficient method to compute some triangulation of $\conv\{\ve
0,\ve r_1,\ldots,\ve r_l\}$. We will show that the placing
triangulation is a suitable candidate.

Let $\Po \subseteq \R^n$ be a polytope of dimension $n$ and $\Delta$
be a facet of $\Po$ and $\ve v \in \R^n$. There exists a unique
hyperplane $H$ containing $\Delta$ and $\Po$ is contained in one of
the closed sides of $H$, call it $H^+$. If $\ve v$ is contained in the
interior of $H^-$, the other closed halfspace defined by $H$, then
$\Delta$ is \emph{visible} from $\ve v$ (see chapter 14.2 in
\cite{1997Handbook-of-dis}). The well-known \emph{placing
triangulation} is given by an algorithm where a point is added to an
intermediate triangulation by determining which facets are visible to
the new point
\cite{1997Handbook-of-dis, De-Loera2006Triangulations}. We recall now how
to determine if a facet is visible to a vertex in polynomial time.

\begin{figure}[!htb]    
    \centering
    \hfil(a)\subfigure{\scalebox{0.8}{\ifpdf
    \input{visfacet1.pdf_t}
    \else
    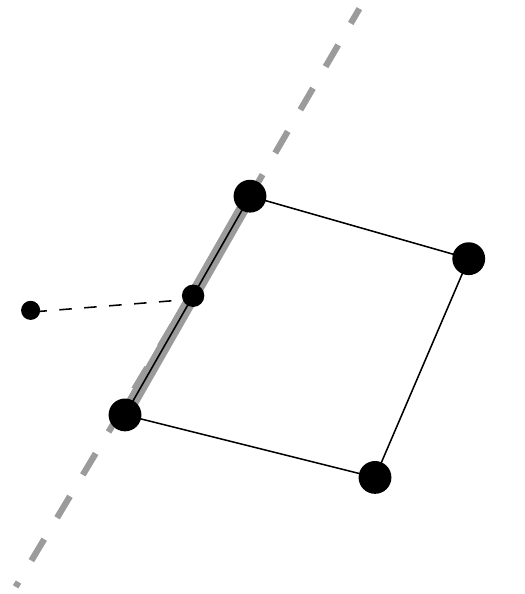
    \fi}}
    \hfil(b)\subfigure{\scalebox{0.8}{\ifpdf
    \input{visfacet2.pdf_t}
    \else
    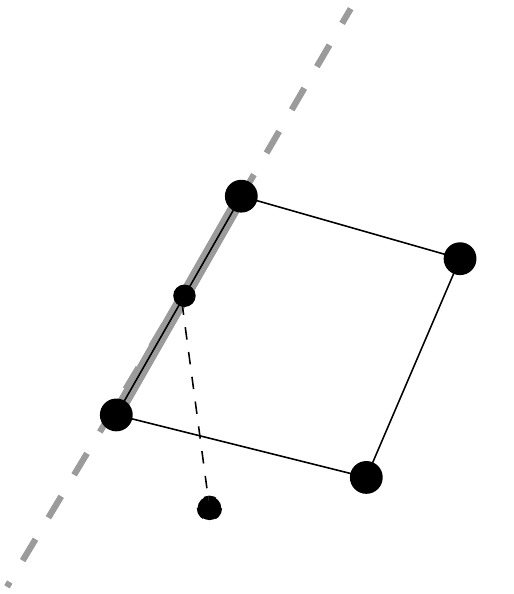
    \fi}}
    \caption{Figure (a) shows $\Delta$ visible to $\ve v$. Figure (b) shows
        $\Delta$ {\bf not} visible to $\ve v$.}
\end{figure}

\begin{lemma} \label{lem:vislp}
Let $\Po \subseteq \R^n$ be a polytope given by $t$ vertices
$\{\ve v^1,\ldots,\ve v^t\} \subseteq \R^n$ and $\Delta \subseteq \Po$ be a facet of $\Po$
given by $q$ vertices $\{\ve{\tilde v}^1,\ldots, \ve{\tilde v}^q \} \subseteq
\{\ve v^1,\ldots,\ve v^t\} $. If $\ve v \in \R^n$ where $\ve v \notin \Po$ then deciding if
$\Delta$ is visible to $\ve v$ can be done in polynomial time in the input
$\{\ve{\tilde v}^1,\ldots, \ve{\tilde v}^q \}$, $\{\ve v^1,\ldots,\ve v^t\}$ and $\ve v$.
\end{lemma}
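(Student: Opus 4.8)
The statement is essentially a linear-programming feasibility check, so the plan is to reduce visibility to a single LP (or a small number of them) whose size is polynomial in the given data. First I would recall the definition: $\Delta$ is visible from $\ve v$ iff $\ve v$ lies in the interior of the open halfspace $H^-$ bounded by the affine hull $H$ of $\Delta$, on the opposite side from $\Po$. So the whole problem is: (i) compute a linear functional $\ve c$ and scalar $\delta$ with $H = \{\ve x : \inner{\ve c}{\ve x} = \delta\}$, (ii) check that $\Po \subseteq H^+ = \{\ve x : \inner{\ve c}{\ve x} \le \delta\}$ and that $\ve v$ satisfies $\inner{\ve c}{\ve v} > \delta$ strictly.

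The key steps, in order: (1) Since $\Delta$ is a facet of the full-dimensional polytope $\Po$, its affine hull $H$ has dimension $n-1$; compute a normal vector $\ve c$ to $H$ by finding a nonzero solution of the homogeneous system $\inner{\ve c}{\ve{\tilde v}^i - \ve{\tilde v}^1} = 0$ for $i = 2,\ldots,q$ — this is Gaussian elimination on a matrix with entries from the input, hence polynomial time, and $n-1$ of the $\ve{\tilde v}^i$ are affinely independent since $\Delta$ is a facet. Set $\delta := \inner{\ve c}{\ve{\tilde v}^1}$. (2) Determine the orientation: evaluate $\inner{\ve c}{\ve v^j}$ for all $t$ vertices of $\Po$; since $\Po$ lies on one side of $H$, all these values are $\le \delta$ or all are $\ge \delta$ (and at least one is strict, as $\dim\Po = n > \dim\Delta$). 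Replacing $\ve c,\delta$ by $-\ve c,-\delta$ if necessary, we may assume $\inner{\ve c}{\ve v^j} \le \delta$ for all $j$, so $H^+ = \{\inner{\ve c}{\ve x}\le\delta\}$. (3) Then $\Delta$ is visible from $\ve v$ if and only if $\inner{\ve c}{\ve v} > \delta$, a single arithmetic comparison. All three steps involve only solving a linear system and evaluating finitely many inner products of input vectors, so the total running time is polynomial in the bit-size of $\{\ve{\tilde v}^1,\ldots,\ve{\tilde v}^q\}$, $\{\ve v^1,\ldots,\ve v^t\}$, and $\ve v$. (One could equivalently phrase steps (1)–(2) as a single LP: minimize $0$ subject to $\inner{\ve c}{\ve v^j - \ve{\tilde v}^1}\le 0$ for all $j$ and $\inner{\ve c}{\ve{\tilde v}^i - \ve{\tilde v}^1} = 0$ for all $i$, then test the sign of $\inner{\ve c}{\ve v - \ve{\tilde v}^1}$; linear programming in polynomial time then gives the bound.)

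The only subtle point — and the part I'd be most careful about — is making sure the facet hypothesis is actually used and that the normal vector $\ve c$ is well-defined up to scaling: if $\Delta$ were merely a lower-dimensional face, $H$ would not be unique and "visible" would be ambiguous, but because $\Delta$ is a facet of a full-dimensional polytope, $\aff(\Delta)$ is a genuine hyperplane and $\ve c$ is unique up to sign, which is exactly the sign fixed in step (2). Handling the degenerate possibility that the computed $\ve c$ from step (1) is not unique (i.e. the given vertex list of $\Delta$ spans something of dimension less than $n-1$) does not arise under the hypotheses, but if one wants to be safe one simply takes any nonzero vector orthogonal to $\aff(\Delta) - \aff(\Delta)$ and checks in step (2) that it does not vanish on some $\ve v^j$; it cannot, since $\Po$ is full-dimensional. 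Everything else is routine linear algebra and the cited polynomiality of linear programming.
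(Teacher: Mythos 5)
Your proof is correct and takes a different route from the paper's. You compute the unique (up to scaling) defining hyperplane $H=\aff(\Delta)$ of the facet explicitly by Gaussian elimination, fix its orientation by testing all vertices of $\Po$, and then decide visibility by a single sign check on $\inner{\ve c}{\ve v}-\delta$; this is a direct implementation of the definition and uses only elementary linear algebra. The paper instead picks a relative-interior point $\ve z$ of $\Delta$ (the barycenter of its vertices) and solves an LP feasibility problem asking whether the segment from $\ve z$ toward $\ve v$ re-enters $\Po$; if it does, $\Delta$ is not visible, otherwise it is. Your approach buys a more elementary and arguably cleaner argument, and avoids the strict-inequality bookkeeping the paper needs in its LP. The paper's segment-intersection LP is formulated without ever writing down $\ve c$ or $\delta$, which makes it slightly more robust when the ambient polytope is lower-dimensional in $\R^n$ (as happens inside the placing-triangulation algorithm once the "else" branch is entered with $\ve v_i\in\affine(P_i)$); your method handles that case too, but only after the extra step you flag of restricting to a normal direction that is not constant on $\affine(\Po)$. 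Two small wrinkles worth fixing: you need $n$ (not $n-1$) affinely independent vertices of $\Delta$ to pin down the hyperplane, and the "equivalent LP" you sketch at the end has $\ve c=\ve 0$ as a trivial feasible solution, so it needs a normalization constraint before it can be used as stated. Neither affects the main argument, which is via Gaussian elimination rather than LP.
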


\begin{proof}
Let $  \ve  z := \frac{1}{q}
\sum_{i=1}^q  \ve{\tilde v}^i $ so that $\ve z \in \interior ( \Delta )$. We consider the linear program:
\begin{equation} \label{vislp}
    \begin{split} 
    \Bigg\{ \, \left( \begin{array}{c} \ve x \\ \ve y \\ \lambda \end{array} \right) \in \R^{n+t+1} \;\; | \;\;  \ve x = \sum_{i=1}^t \ve v^i \ve y_i, \: & \ve y \geq \ve 0, \: \sum_{i=1}^t \ve y_i=1,    \\
    & \: 0 \leq \lambda < 1, \: \lambda \ve v + (1-\lambda) \ve z = \ve x \, \Bigg\} .  
    \end{split}
\end{equation}

If \eqref{vislp} has a solution then there exists a point $\ve{\bar x} \in \Po$
between the facet $ \Delta$ and $\ve v$, hence $ \Delta$ is not visible from $\ve v$. If
\eqref{vislp} does not have a solution, then there are no points of $
\Po$ between $\ve v$ and $\Delta$, hence $\Delta$ is visible from $\ve v$
(see Lemma 4.2.1 in \cite{De-Loera2006Triangulations}). It is well known that
a strict inequality, such as the one in \eqref{vislp}, can be handled by an
equivalent linear program which has only one additional variable. Determining
if \eqref{vislp} has a solution can be done in polynomial time in the input
\cite{Schrijver1986Theory-of-linea}.
\end{proof}

\begin{algorithm}[The Placing Triangulation \cite{1997Handbook-of-dis, De-Loera2006Triangulations}]
    \mbox{}
    \vskip .25cm
    \label{alg:ept}
    \begin{center}
        \begin{tabular}{l}
            \begin{minipage}{.85\linewidth}
                \begin{algorithmic}
                    \item[Input:] A set of ordered points $\{ \ve v_1,\ldots,\ve v_t \} \in \R^n$.
                    \item[Output:] A triangulation $\Tr$ of $\{ \ve v_1,\ldots,\ve v_t \}$
                    \STATE $\Tr := \{\{\ve v_1\}\}$.
                    \FOR{ each $\ve v_i \in \{\ve v_2,\ldots,\ve v_t\}$} \label{alg:ept:l4}
                        \STATE Let $B \in \Tr$.
                        \STATE $P_i := \{\ve v_1,\ldots,\ve v_{i-1}\}$
                        \IF{ $\ve v_i \notin \affine(P_i)$} \label{alg:ept:aff} 
                            \STATE $\Tr' := \emptyset$
                            \FOR{ each $D \in \Tr$} \label{alg:ept:l9}
                                \STATE $\Tr' := \Tr' \cup \{D \cup \{\ve v_i\}\}$.
                            \ENDFOR
                        \ELSE
                            \FOR{ each $B \in \Tr$ and each $(|B|-1)$-subset $C$ of $B$} \label{alg:ept:l12}
                                \STATE Create and solve the linear program \eqref{vislp} with $(P_i,C,\ve v_i)$ to decide visibility of $C$ to $\ve v_i$. \label{alg:ept:l13}
                                \IF{$C$ is visible to $\ve v_i$}
                                    \STATE $\Tr' := \Tr' \cup \{C \cup \{\ve v_i\}\}$
                                \ENDIF
                            \ENDFOR
                        \ENDIF
                        \STATE $\Tr := \Tr'$
                    \ENDFOR
                    \RETURN $\Tr$
                \end{algorithmic}
            \end{minipage}\\
        \end{tabular}
    \end{center}
\end{algorithm}

Indeed, Algorithm \ref{alg:ept} returns a triangulation
\cite{1997Handbook-of-dis}. We will show that for certain input, it runs in
polynomial time. We remark that there are exponentially, in $n$, many lower
dimensional simplices in any given triangulation. But, it is important to note
that only the highest dimensional simplices are listed in an intermediate
triangulation (and thus the final triangulation) in the placing triangulation
algorithm.

\begin{theorem} \label{thm:ept}
Let $r$ be a fixed integer, $n$ be an integer, $A \subseteq [n]$ with $|A|
\leq r$, and let $\Co \subseteq \R^n$ be a cone generated by extremal rays
$\{\ve r_1,\ldots,\ve r_l\} \subseteq R_A$. Then the placing triangulation
(Algorithm \ref{alg:ept}) with input $\{\ve 0, \ve r_1,\ldots,\ve r_l\}$ runs
in polynomial time. 
\end{theorem}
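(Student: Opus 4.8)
The plan is to bound separately the number of iterations of Algorithm~\ref{alg:ept} and the cost of a single iteration, the key being that neither the number of input points nor the size of any intermediate triangulation~$\Tr$ ever exceeds a polynomial in~$n$. For the first, the algorithm receives $l+1$ points, and since $\{\ve r_1,\dots,\ve r_l\}\subseteq R_A$ are extremal with $|A|\le r$, the bound~\eqref{eq:genbnd} gives $l\le n|A|+n+|A|\le(r+1)n+r$; so there are only $O(n)$ passes of the outer loop. We are free to order the input, and I would take $\ve v_1=\ve 0$.

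The crucial step is the uniform bound on the intermediate triangulations. It is a standard property of the placing triangulation that after the $i$-th pass $\Tr$ is a triangulation of $\conv\{\ve v_1,\dots,\ve v_i\}=\conv(\{\ve 0\}\cup S_i)$, where $S_i:=\{\ve v_2,\dots,\ve v_i\}\subseteq\{\ve r_1,\dots,\ve r_l\}\subseteq R_A$. I would first note that each element of $S_i$ is still an extremal ray of $\cone(S_i)$: if some $\ve r_k\in S_i$ lay in $\cone(S_i\setminus\{\ve r_k\})\subseteq\cone(\{\ve r_1,\dots,\ve r_l\}\setminus\{\ve r_k\})$ it could not be extremal in~$\Co$. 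Hence $\cone(S_i)$ is again a cone generated by an elementary set of extremal rays with $|A|\le r$, and Lemma~\ref{lem:elsimpbnd} applies verbatim to $\conv(\{\ve 0\}\cup S_i)$: the current $\Tr$ has at most $2^{r}n(n-1)\cdots(n-r+1)$ top-dimensional simplices, a polynomial of degree~$r$ in~$n$.

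Granting this, the work of one pass is easily bounded. Deciding $\ve v_i\in\affine(P_i)$ is linear algebra on at most $l+1$ vectors from $\{-1,0,1\}^n$, hence polynomial. In the branch $\ve v_i\notin\affine(P_i)$ (line~\ref{alg:ept:aff}) the loop merely cones each of the $|\Tr|$ top-dimensional simplices with $\ve v_i$, costing $O(|\Tr|\,n)$. In the branch $\ve v_i\in\affine(P_i)$ the algorithm ranges over all $B\in\Tr$ and all $(|B|-1)$-subsets $C$ of $B$ --- at most $|\Tr|$ simplices, each with $|B|=\dim\conv(P_i)+1\le n+1$ such subsets --- and for each pair solves the visibility linear program~\eqref{vislp} on the data $(P_i,C,\ve v_i)$. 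Since $\ve v_i$ is an extremal ray of $\cone(S_i\cup\{\ve v_i\})$ it does not lie in $\conv(P_i)$, so Lemma~\ref{lem:vislp} applies and each such LP is solved in time polynomial in its (polynomially bounded) input. Hence one pass costs $\mathrm{poly}(n)$, and so does the whole run. Finally the output --- at most $2^{r}n(n-1)\cdots(n-r+1)$ top-dimensional simplices, each recorded by $\le n+1$ vectors from $\{-1,0,1\}^n$ --- has polynomial size, so the claim is meaningful.

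The step I expect to be the main obstacle is precisely the uniform bound on intermediate triangulations. A placing triangulation can in general pass through intermediate complexes larger than the final one, and a triangulation of a subpolytope need not be a subcomplex of a triangulation of the ambient polytope; so one cannot simply cite the bound for $\conv(\{\ve 0,\ve r_1,\dots,\ve r_l\})$. What makes the estimate go through is the observation above: with $\ve v_1=\ve 0$ every intermediate polytope $\conv(\{\ve 0\}\cup S_i)$ is again exactly of the form governed by Lemma~\ref{lem:elsimpbnd}, so the degree-$r$ bound holds at every stage. A smaller point to watch is that Lemma~\ref{lem:vislp} is to be invoked only on the polynomially many, polynomially sized vertex sets occurring here and with $\ve v_i\notin\conv(P_i)$ --- which is what extremality of the rays secures.
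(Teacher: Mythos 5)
Your proof is correct and follows the same overall structure as the paper's: bound the number of outer-loop iterations via~\eqref{eq:genbnd}, bound the size of every intermediate triangulation, and bound per-iteration cost via \autoref{lem:vislp}. The one step where you genuinely diverge is the bound on intermediate triangulations, and in fact you handle it more carefully than the paper does. The paper asserts that each intermediate~$\Tr$ is ``a subset of the final triangulation'' and is therefore polynomially bounded by \autoref{lem:elsimpbnd}; but as you correctly anticipate, this is not literally true as a statement about sets of top-dimensional simplices, since whenever a placed point increases the dimension every simplex of~$\Tr$ is replaced by its cone with the new point. What does hold is that the number of top-dimensional simplices is non-decreasing along the run, which is enough for the paper's count. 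You instead avoid any such monotonicity claim: you fix $\ve v_1 = \ve 0$, observe that after each pass $\Tr$ is a triangulation of $\conv(\{\ve 0\}\cup S_i)$ where $S_i \subseteq R_A$ consists of rays that are still extremal in $\cone(S_i)$ (because $\cone(S_i\setminus\{\ve r_k\})\subseteq\cone(\{\ve r_1,\dots,\ve r_l\}\setminus\{\ve r_k\})$), and apply \autoref{lem:elsimpbnd} directly at every stage. This is a cleaner and more robust justification of the key estimate, and it also gives you for free that $\ve v_i\notin\conv(P_i)$, which is the hypothesis needed to invoke \autoref{lem:vislp}. The remainder of your proof matches the paper's.
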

\begin{proof}
By Equation \eqref{eq:genbnd} there is only a polynomial, in $n$, number of
extremal rays $\{\ve r_1,\ldots,\ve r_l\}$. Thus, the {\bf for}
statement on line \ref{alg:ept:l4} repeats a polynomial number of
times. Step \ref{alg:ept:aff} can be done in polynomial time by solving the linear
equation $[\ve r_1,\ldots,\ve r_{i-1}] \ve x = \ve v_i$.  

The {\bf for} statement on line \ref{alg:ept:l9} repeats for every simplex $D$ in the
triangulation $\Tr$, and the number of simplices in $\Tr$ is bounded by the
number of simplices in the final triangulation. By Lemma \ref{lem:elsimpbnd}
any triangulation of extremal cone generators in $R_A$ with the origin will use
at most polynomially many top-dimensional simplices. Hence the number of top-dimensional simplices of any
partial triangulation $\Tr$ will be polynomially bounded since it is a subset
of the final triangulation.

The {\bf for} statement on line \ref{alg:ept:l12} repeats for every simplex $B$ and
every $(|B|-1)$-simplex of $B$. As before, the number of simplices $B$ is
polynomially bounded, and there are at most $n$ $(|B|-1)$-simplices of $B$.
Thus the {\bf for} statement will repeat a polynomial number of times.

Finally, by Lemma \ref{lem:vislp}, determining if $C$ is visible to $\ve v_i$ can
be done in polynomial time. Therefore Algorithm \ref{alg:ept} runs in a
polynomial time.
\end{proof}

\begin{corollary} \label{elemconepoly}
Let $r$ be a fixed integer, $n$ be an integer, $A \subseteq [n]$ with $|A|
\leq r$, and let $\Co \subseteq \R^n$ be a cone generated by extremal rays
$\{\ve r_1,\ldots,\ve r_l\} \subseteq R_A$.  A triangulation of $\Co$ can be
computed in polynomial time in the input of the extremal ray generators $\{\ve
r_1,\ldots,\ve r_l \}$. 
\end{corollary}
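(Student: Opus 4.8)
The plan is to reduce to \autoref{thm:ept}: I would adjoin the apex $\ve 0$ to the given rays, triangulate the resulting polytope with the placing triangulation, and then recover a triangulation of $\Co$ by coning the appropriate simplices back from the origin.

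Concretely, first I would run \autoref{alg:ept} on the ordered point set $\{\ve0,\ve r_1,\ldots,\ve r_l\}$, listing $\ve 0$ first. By \eqref{eq:genbnd} the number $l$ of generators is polynomial in $n$, so the input has polynomial size, and by \autoref{thm:ept} the algorithm terminates in time polynomial in $n$, returning a triangulation $\Tr$ of $P:=\conv\{\ve0,\ve r_1,\ldots,\ve r_l\}$. By \autoref{lem:elsimpbnd} this $\Tr$ contains only polynomially many top-dimensional simplices, which are exactly the ones the algorithm records.

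Next I would pass from $\Tr$ to a triangulation of $\Co$. Since $\Co$ is generated by its extremal rays it is pointed, so there is a linear functional strictly positive on all of $\ve r_1,\ldots,\ve r_l$; that functional attains its minimum over $P$ uniquely at $\ve0$, hence $\ve0$ is a vertex of $P$. Moreover the tangent cone $\Co_P(\ve0)$ is exactly $\Co$: one inclusion holds because $P\subseteq\Co$, and conversely every $\ve w\in\Co$ satisfies $\epsilon\ve w\in P$ for all sufficiently small $\epsilon>0$. For each top-dimensional simplex $S\in\Tr$ with $\ve0\in S$, write $S=\conv\{\ve0,\ve r_{i_1},\ldots,\ve r_{i_d}\}$ with $d=\dim\Co$ and output the simplicial cone $\cone\{\ve r_{i_1},\ldots,\ve r_{i_d}\}$; it is standard that coning, from a vertex $\ve v$, the facet opposite $\ve v$ in each maximal simplex of a triangulation of a polytope produces a triangulation of the tangent cone at $\ve v$, so the cones produced here triangulate $\Co_P(\ve0)=\Co$. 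Selecting the simplices of $\Tr$ incident to $\ve0$ is clearly polynomial, since $\Tr$ has only polynomially many maximal simplices.

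The substantive content is carried by \autoref{thm:ept} together with the simplex count in \autoref{lem:elsimpbnd}; what remains is bookkeeping. The one step that deserves care is the transition from $P$ to $\Co$: one must check that $\ve0$ is genuinely a vertex of $P$ --- equivalently, that $\Co$ is pointed, which is automatic once $\Co$ is generated by extremal rays --- and that $\Co_P(\ve0)=\Co$, so that the coning construction applies verbatim. (If an inequality description of $\Co$, rather than a simplicial fan, were desired, that is a further routine polynomial-time conversion of the polynomially many output cones.)
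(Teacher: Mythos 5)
Your proof is correct, and its overall structure (triangulate $\conv\{\ve 0,\ve r_1,\ldots,\ve r_l\}$ via the placing triangulation, using \autoref{thm:ept} for polynomiality and \autoref{lem:elsimpbnd} to bound the number of simplices, then extract a simplicial fan for $\Co$) is the same as the paper's. The two proofs differ in the post-processing step: the paper introduces a second algorithm (\autoref{alg:jt}) which identifies the boundary facets of the triangulation $\Tr'$ that do not contain $\ve 0$ and cones them from $\ve 0$, thereby producing a second triangulation of $\Po_\Co$ in which every maximal simplex is incident to $\ve 0$, and then cones that. You instead take the vertex star of $\ve 0$ directly in the first triangulation $\Tr'$ and cone its facets opposite $\ve 0$, which gives a fan covering the tangent cone $\Co_P(\ve 0)=\Co$. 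Both produce valid unimodular triangulations of $\Co$ in polynomial time, but they are in general different triangulations (coning the far boundary may use rays $\ve r_i$ not adjacent to $\ve 0$ in $\Tr'$, whereas the star only ever uses rays spanning simplices through $\ve 0$). Your route is a little more economical since it avoids the second pass and the boundary-detection loop; the paper's route has the minor side benefit of exhibiting a triangulation of the full polytope $\Po_\Co$ with $\ve 0$ in every maximal cell, but that extra structure is not needed for the corollary. You also supply the justification, which the paper leaves implicit, that $\ve 0$ is a genuine vertex of $P$ (pointedness of $\Co$) and that $\Co_P(\ve 0)=\Co$, both of which are needed for the coning step to be meaningful; these are worth spelling out as you did.
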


\begin{proof}
Let $\Po_\Co := \conv \{ \ve 0, \ve r_1,\ldots,\ve r_t\} $. We give an algorithm which produces a triangulation of
    $\Po_\Co := \conv \{ \ve 0, \ve r_1,\ldots,\ve r_t\} $ such that each full-dimensional
    simplex has $\ve 0$ as a vertex. Such a triangulation would extend to a triangulation of
    the cone $\Co$. This can be accomplished by applying two placing
    triangulations: one to triangulate the boundary of $\Po_\Co$ not incident to
    $\ve 0$, and another to attach the triangulated boundary faces to $\ve 0$.  The
    algorithm goes as follows:

\begin{itemize}
    \item[1)] {\bf Triangulate $\Po_\Co$ using the placing triangulation algorithm. Call it $\Tr'$.} 

    \item[2)] {\bf Triangulate $\Po_\Co$ using the boundary faces of $\Tr'$ which do not contain $v$}. 
\begin{algorithm}[Triangulation joining $\ve 0$ to boundary faces]
    \mbox{}
    \vskip .25cm
    \label{alg:jt}
    \begin{center}
        \begin{tabular}{l}
            \begin{minipage}{.85\linewidth}
                \begin{algorithmic}
                        \item[Input:] A triangulation $\Tr'$ of $\Po_\Co$, given by its vertices.
                        \item[Output:] A triangulation $\Tr$ of $\Po_\Co$ such that every highest dimension simplex of $\Tr$ is incident to $\ve 0$.
                        \STATE $\Tr := \emptyset$ \label{alg:jt:1}
                        \FOR{each $C$ where $C$ is a $(|B|-1)$-simplex of $B \in \Tr'$} \label{alg:jt:3}
                            \IF{$C$ is not a $(|A|-1)$-simplex of $A \in \Tr'$ where $A \neq B$} \label{alg:jt:4}
                                \STATE $\Tr := \Tr \cup \{ \, C \cup \{\ve 0\} \, \}$ \label{alg:jt:5}
                            \ENDIF
                        \ENDFOR
                        \RETURN $\Tr$

                \end{algorithmic}
            \end{minipage}\\
        \end{tabular}
    \end{center}
\end{algorithm}
\end{itemize}

\begin{figure}[!htb]    
    \centering
    \hfil(a)\subfigure{\scalebox{0.6}{\ifpdf
    \input{bndtri1.pdf_t}
    \else
    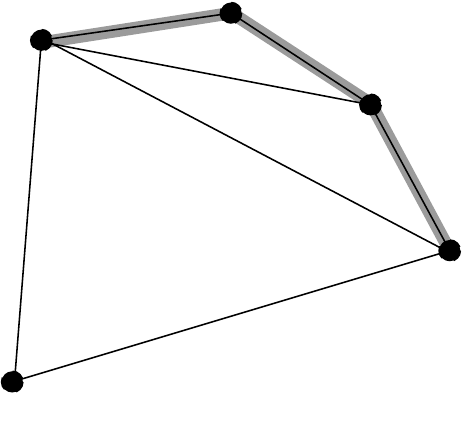
    \fi}}
    \hfil(b)\subfigure{\scalebox{0.6}{\ifpdf
    \input{bndtri2.pdf_t}
    \else
    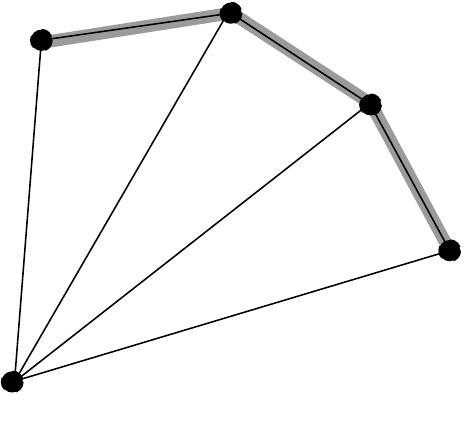
    \fi}}
    \caption{A triangulation $\Tr'$ of $\Po_\Co$ can be used to extend to a triangulation $\Tr$ such that $\ve 0$ is incident to every highest dimensional simplex}
\end{figure}

By Theorem \ref{thm:ept}, triangulating $\Po_\Co$ using Algorithm \ref{alg:ept}
can be done in polynomial time. Algorithm \ref{alg:jt} indeed produces a triangulation of $\Po_\Co$. It covers
$\Po_\Co$ since every extremal ray generator $\ve r_k$ of $\Co$ is on some
$(\dim(\Co)-1)$-simplex. Moreover, $\Tr$ by construction has the property that
the intersection of any two simplices of $\Tr$ is a simplex. Step
\ref{alg:jt:4} checks if $C$ is on the boundary, since if $C$ is on the
boundary it will not be on the intersection of two higher-dimensional
simplices. 

Step \ref{alg:jt:3} repeats a polynomial number of times since any
triangulation of $\Po_\Co$ has at most a polynomial number of simplices, and
each simplex $B$ has at most $n$ $(|B|-1)$-simplices. Step \ref{alg:jt:4} can
be computed in polynomial time since again there are only polynomially many
simplices $B$ in the triangulation $\Tr'$ and at most $n$ $(|B|-1)$-simplices
to check if they are equal to $C$. Hence, Algorithm \ref{alg:jt} runs in
polynomial time.
\end{proof}

\begin{example}
    The tangent cone at the vertex $\ve e_B := \ve e_{\{1,2,3\}}$ on the polytope $\Po(M(K_4))$ can be triangulated as:
\begin{align*}
\Big\{ \, & \left\{ \ve e_{\{2, 3, 5\}}- \ve e_B,\, \ve e_{\{2, 3, 4\}}- \ve e_B,\, \ve e_{\{1, 3, 6\}}- \ve e_B,\, \ve e_{\{1, 3, 4\}}- \ve e_B,\, \ve e_{\{1, 2, 6\}} - \ve e_B \right\},  \\
& \left\{ \ve e_{\{2, 3, 5\}}- \ve e_B,\, \ve e_{\{1, 3, 6\}}- \ve e_B,\, \ve e_{\{1, 3, 4\}}- \ve e_B,\, \ve e_{\{1, 2, 6\}}- \ve e_B,\, \ve e_{\{1, 2, 5\}}- \ve e_B \right\}, \\
& \left\{ \ve e_{\{2, 3, 5\}}- \ve e_B,\, \ve e_{\{2, 3, 4\}}- \ve e_B,\, \ve e_{\{1, 3, 4\}}- \ve e_B,\, \ve e_{\{1, 2, 6\}}- \ve e_B,\, \ve e_{\{1, 2, 5\}}- \ve e_B \right\} \, \Big\}.
\end{align*}
\end{example}

\subsection{Polymatroids}
\label{subsec:polymatroids}
We will show that certain lemmas from Subsection
    \ref{subsec:ontangent} also hold for certain polymatroids.
 Recall that the rank of the matroid $M$ is the size of any bases of $M$ which
 equals $\varphi([n])$. Our lemmas from Subsection \ref{subsec:ontangent} rely
 on the fact that $M$ has fixed rank, that is, for some $r \in \Z$, $r \geq 0$,
 $\varphi(A) \leq r$ for all $A \subseteq [n]$. We will show that a similar
 condition on a polymatroid rank function is sufficient for the lemmas of
 Subsection \ref{subsec:ontangent} to hold.  

 \begin{lemma} \label{lem:polyvert}
    Let $\psi \colon 2^{[n]} \longrightarrow \mathbb{N}$ be an integral polymatroid
    rank function where $\psi(A) \leq r$ for all $A \subseteq [n]$, where $r$ is
    a fixed integer. Then the number of vertices of $\Po(\psi)$ is bounded by a
    polynomial in $n$ of degree $r$. 
 \end{lemma}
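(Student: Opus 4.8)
The plan is to pin down each vertex of $\Po(\psi)$ as a lattice point of a small simplex and then count. First I would invoke the classical fact that, because $\psi$ is integer-valued (and submodular, non-decreasing, $\psi(\emptyset)=0$), the polymatroid $\Po(\psi)$ is an integral polytope, so all of its vertices lie in $\Z^n$; this follows from Edmonds' greedy algorithm for polymatroids (see \cite{Edmonds2003Submodular-func}, or \cite{Welsh1976Matroid-Theory,Schrijver2003Combinatorial-O}), which for every linear objective returns an integer optimal point, every vertex of $\Po(\psi)$ being attained in this way.

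Next I would observe that the inequality description of $\Po(\psi)$ includes $\ve x \geq \ve 0$ and, taking $A=[n]$, the single inequality $\sum_{i=1}^n x_i \leq \psi([n]) \leq r$. Hence every vertex $\ve v$ of $\Po(\psi)$ is a nonnegative integer vector with $\sum_{i=1}^n v_i \leq r$; equivalently, $\ve v$ is a lattice point of the dilated simplex $r\Delta$, where $\Delta := \conv\{\ve 0,\ve e_1,\dots,\ve e_n\}$.

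It then remains to count the lattice points $\ve v \in \Z^n$ with $\ve v \geq \ve 0$ and $\sum_{i=1}^n v_i \leq r$. A standard stars-and-bars argument (adjoin a slack coordinate $v_{n+1} := r - \sum_{i=1}^n v_i \geq 0$ to convert the inequality into the equation $\sum_{i=1}^{n+1} v_i = r$) gives exactly $\binom{n+r}{r}$ such vectors. Since $r$ is fixed, $\binom{n+r}{r} = \frac{1}{r!}(n+1)(n+2)\cdots(n+r)$ is a polynomial in $n$ of degree $r$, and it bounds the number of vertices of $\Po(\psi)$, which is the claim.

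The only real ingredient is the integrality of $\Po(\psi)$ in the first step; the remaining steps are elementary bookkeeping. Accordingly, the main point to be careful about is that ``integral, non-decreasing, submodular with $\psi(\emptyset)=0$'' is precisely the hypothesis under which all vertices of $\Po(\psi)$ are guaranteed to be integral, so that the lattice-point count in the last step legitimately applies.
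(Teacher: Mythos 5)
Your proof is correct and takes essentially the same approach as the paper: both rely on integrality of $\Po(\psi)$ (from $\psi$ being an integral polymatroid rank function) and then bound the vertices by the number of nonnegative integer solutions to $x_1+\cdots+x_n\leq r$, which is $\binom{n+r}{r}$, a polynomial in $n$ of degree $r$. You simply spell out the stars-and-bars count and the reason every vertex satisfies the simplex constraint a bit more explicitly than the paper does.
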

\begin{proof}
  It is known that if $\psi$ is integral then all vertices of
    $\Po(\psi)$ are integral \cite{Welsh1976Matroid-Theory}. The number of vertices
    of $\Po(\psi)$ can be bounded by the number of non-negative integral solutions to $x_1 +
    \cdots +  x_n \leq r$, which has ${n + r \choose r}$
    solutions, a polynomial in $n$ of degree $r$ \cite{Stanley1997Enumerative-Com}.
\end{proof}

\begin{lemma} \label{lem:polyenum}
    Let $\psi \colon 2^{[n]} \longrightarrow \mathbb{N}$ be an integral polymatroid
    rank function. If $\ve v$ is a vertex of $\Po(\psi)$ then all adjacent
    vertices of $\ve v$ can be enumerated in polynomial time. Moreover if
    $\psi(A) \leq r$ for all $A \subseteq [n]$, where $r$ is a fixed integer,
    then the vertices of $\Po(\psi)$ can be enumerated in polynomial time.
\end{lemma}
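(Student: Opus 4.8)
The plan is to handle the two assertions separately, using the combinatorial description of adjacency on polymatroids and then bootstrapping to a full vertex enumeration exactly as was done for matroid polytopes. For the first assertion, I would start from the known characterization of edges of a (generalized) polymatroid $\Po(\psi)$: if $\ve v$ is a vertex, then $\ve v$ is obtained by a greedy procedure along some permutation of $[n]$, and two vertices $\ve v, \ve v'$ are adjacent if and only if $\ve v - \ve v'$ is parallel to a vector of the form $\ve e_i - \ve e_j$ or $\pm\ve e_k$, with the scalar controlled by values of $\psi$ on a nested family of sets (this is the polymatroid analogue of \autoref{lem:adj}(B), and follows from Edmonds' theory of submodular functions / the exchange property for polymatroid bases, see \cite{Welsh1976Matroid-Theory, Schrijver2003Combinatorial-O}). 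Given a vertex $\ve v$, each candidate adjacent vertex is specified by a pair $(i,j)$ together with the tight-set structure at $\ve v$; there are only $O(n^2)$ such candidate directions, and for each one the exchange amount is computed by a constant number of oracle calls to $\psi$ (evaluating $\psi$ on the relevant tight sets). One then checks, again via polynomially many oracle calls and linear algebra, whether the resulting point is actually a vertex adjacent to $\ve v$. This yields all neighbors of $\ve v$ in time polynomial in $n$.

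For the second assertion, assume $\psi(A)\le r$ for all $A$, with $r$ fixed. By \autoref{lem:polyvert} the polytope $\Po(\psi)$ has at most $\binom{n+r}{r}$ vertices, a polynomial in $n$. I would then run a breadth-first (or depth-first) search on the graph of $\Po(\psi)$: start from any single vertex — obtained, say, by the greedy algorithm for a fixed linear objective, which requires only $n$ oracle calls — and repeatedly apply the neighbor-enumeration procedure of the first part to every newly discovered vertex. Since $\Po(\psi)$ is a polytope, its graph is connected (Balinski), so this search visits every vertex. The total running time is (number of vertices) $\times$ (cost of local neighbor enumeration), both polynomial in $n$; a polynomially bounded dictionary of already-seen vertices keeps the bookkeeping efficient.

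The main obstacle is making the first assertion genuinely precise and oracle-friendly: one must pin down the exact combinatorial description of the edges of a polymatroid in terms of $\psi$, identify the correct nested/tight sets witnessing each edge direction, and verify that testing "is this candidate point a vertex adjacent to $\ve v$" can be reduced to polynomially many evaluations of $\psi$ plus polynomial-time linear algebra — rather than, say, solving an exponential-size linear program implicitly defined by all $2^n$ inequalities. Once the local step is established, the global enumeration is routine graph search combined with the polynomial vertex bound from \autoref{lem:polyvert}.
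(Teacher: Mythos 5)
Your proposal follows essentially the same two-step structure as the paper's proof: a local neighbor-enumeration step based on a Topkis-style adjacency characterization, followed by breadth-first search on the vertex graph using the polynomial vertex bound from Lemma~\ref{lem:polyvert}. The only difference is that the paper dispatches the local step by directly citing Corollary~5.5 of \cite{Topkis1984Adjacency-on-Po} and starts the search from the vertex $\ve 0$, whereas you sketch the content of that citation and start from a greedy vertex --- but the argument is the same.
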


\begin{proof}
     If $\ve v$ is a vertex of $\Po(\psi)$
     then generating and listing all adjacent vertices to $\ve v$ can
     be done in polynomial time by Corollary 5.5 in
     \cite{Topkis1984Adjacency-on-Po}. If $\psi(A) \leq r$ for all $A
     \subseteq [n]$, where $r$ is a fixed integer, then, by Lemma
     \ref{lem:polyvert}, there is a polynomial number of vertices for
     $\Po(\psi)$.  We know that $\ve 0 \in \R^n$ is a vertex of any
     polymatroid. Therefore, beginning with $\ve 0$, we can perform a
     breadth-first search, which is output-sensitive polynomial time, on the graph of
     $\Po(\psi)$, enumerating all vertices of $\Po(\psi)$.
\end{proof}

What remains to be shown is that these polymatroids have cones like the
ones in Subsection \ref{subsec:ontangent}.  We first recall some needed
definitions from \cite{Topkis1984Adjacency-on-Po}.  Let $\ve v,\ve w \in
\R^n$ and define $\Delta(\ve v,\ve w) := \{ \, i \in [n] \mid v_i \neq w_i
\, \}$ and $\cl(\ve v) := \{ \, S \mid S \subseteq [n], \ \sum_{i \in S}
v_i = \psi(S) \, \}$. Let $F = \{ f_1,\ldots,f_{|F|} \}$ be an ordered
subset of $[n]$ and $F_i := \{f_1,\ldots,f_i\}$. If $\psi$ is a polymatroid
rank function then we construct $\ve v \in \R^n$ where $v_i = \psi(F_i) -
\psi(F_{i-1})$ where $ v_j = 0$ when $j \notin F$ and one says $F$
\emph{generates} $\ve v$. A classical result of Edmonds \cite{Edmonds2003Submodular-func} says that the set of vectors generated by all ordered subsets of $[n]$
is exactly the set of vertices of $\Po(\psi)$. Now we can restate an important
lemma. 

\begin{lemma}[See Theorem 4.1 and Section 2 in \cite{Topkis1984Adjacency-on-Po}.] \label{lem:topadj}
    Let $\psi$ be a polymatroid rank function. If $\ve v$ and $\ve w$ are vertices
    of the polymatroid $\Po(\psi)$ then either
    \begin{itemize}
        \item[(i)] $|\Delta(\ve v, \ve w)| = 1$ or
        \item[(ii)] $\cl(\ve v) = \cl(\ve w)$ and $\Delta(\ve v,\ve w) =
        \{c,d\}$ for some $c,d \in [n]$ where there exists some ordered set $F
        = \{f_1,\ldots,f_{|F|}\}$ which generates $\ve v$ with $f_{k+1} = d$
        and $f_k = c$ for some integer $k$, $1 \leq k \leq |F| -1$; moreover
        the ordered set $\tilde F :=
        \{f_1,\ldots,f_{k-1},f_{k+1},f_k,f_{k+2},\ldots,f_{|F|}\}$ generates
        $\ve w$.
    \end{itemize}
\end{lemma}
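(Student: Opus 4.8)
The plan is to obtain this statement as a restatement of Topkis's description of the edges of a polymatroid \cite[Thm.~4.1]{Topkis1984Adjacency-on-Po}, read through Edmonds's greedy characterization of the vertices of $\Po(\psi)$ recalled just above (the relation ``$F$ generates~$\ve v$''). The first task is to set up the dictionary between the two formulations: by Edmonds \cite{Edmonds2003Submodular-func} every vertex of $\Po(\psi)$ is produced by the greedy recursion $v_{f_i}=\psi(F_i)-\psi(F_{i-1})$ from some ordered set $F$; the passage from $F$ to the transposed order $\tilde F$ is exactly Topkis's elementary coordinate exchange; and $\cl(\ve v)$ records which polymatroid constraints are active at $\ve v$. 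Once the dictionary is fixed, alternatives (i) and (ii) are precisely the two possibilities for the direction of an edge incident to a vertex given by \cite[Thm.~4.1]{Topkis1984Adjacency-on-Po}.

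For a self-contained argument I would proceed as follows. Let $\ve v$ and $\ve w$ be the endpoints of an edge of $\Po(\psi)$ and put $\ve d:=\ve w-\ve v$, so $\supp(\ve d)=\Delta(\ve v,\ve w)$. First invoke the elementary polytope fact that the edge $[\ve v,\ve w]$ is cut out from $\Po(\psi)$ by imposing as equalities the inequalities of the system $\{\,\sum_{i\in A}x_i\le\psi(A),\ x_i\ge 0\,\}$ that are active at both $\ve v$ and $\ve w$; since the edge is one-dimensional, these common active inequalities have rank $n-1$. Next use that, by submodularity of $\psi$, each of $\cl(\ve v)$ and $\cl(\ve w)$ is closed under $\cup$ and $\cap$, that every prefix of a generating order of $\ve v$ belongs to $\cl(\ve v)$, and that $v_i=0$ exactly for $i$ outside that order. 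A rank count on the common active inequalities then shows that, after discarding the coordinates forced to $0$ at both vertices, the common tight sets form a chain that is exactly one link short of a maximal one. The missing link either lies above a single coordinate, giving $\ve d=\pm\lambda\ve e_i$ and $|\Delta(\ve v,\ve w)|=1$ (case (i)), or it is a single step skipping over two elements $c,d$, giving $\ve d=\lambda(\ve e_c-\ve e_d)$, $\Delta(\ve v,\ve w)=\{c,d\}$, together with the relation between $\cl(\ve v)$ and $\cl(\ve w)$ asserted in case (ii). Finally, in case (ii) I would build a generating order $F$ of $\ve v$ by running the greedy algorithm and breaking ties so that $c$ is selected immediately before $d$ --- the common tight-set data leaves exactly this freedom --- and then verify, from the greedy formula and from $\sum_{i\in A}d_i=0$ applied to the tight sets $A=F_{k-1}\cup\{d\}$ and $A=F_{k-1}\cup\{c,d\}$, that the transposed order $\tilde F$ produces exactly $\ve w$.

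I expect the main obstacle to be the middle step: showing that the common active inequalities genuinely force $|\Delta(\ve v,\ve w)|\le 2$ and, in that case, the precise ``chain skipping over $\{c,d\}$'' structure behind case (ii). This is where submodularity enters in an essential way --- through the lattice property of the tight sets and the accompanying count of independent active constraints --- and it is the substantive content of \cite{Topkis1984Adjacency-on-Po}. In practice I would cite Topkis for this part and spell out only the dictionary with Edmonds's greedy vertices and the final verification that $\tilde F$ generates $\ve w$.
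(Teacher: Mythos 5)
The paper gives no proof of this lemma at all --- it is stated purely as a citation of Topkis's Theorem~4.1 and Section~2, which is exactly the route your proposal takes for the substantive part (the chain/lattice structure of common tight sets). Your plan to present the result by translating Topkis's formulation into Edmonds's greedy language (the ``$F$ generates $\ve v$'' relation, $\cl(\ve v)$ as active constraints, the transposition $F\mapsto\tilde F$) is the appropriate amount of work given the paper's treatment, and your observation that the hypothesis should read ``adjacent vertices'' rather than just ``vertices'' matches how the lemma is actually invoked in the proof of Lemma~\ref{lem:polyelem}; the extra self-contained sketch, while only an outline at the crucial rank-count step, is a reasonable bonus that you rightly defer to Topkis for.
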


\begin{lemma} \label{lem:polyelem}
    Let $\psi$ be an integral polymatroid rank function and $\Co$ the tangent
    cone of a vertex $\ve v$ of the polymatroid $\Po(\psi)$, translated to the
    origin. Then $\Co$ is generated by extremal ray generators $\{\ve
    r_1,\ldots, \ve r_l\} \subseteq R_{\supp(\ve v)}$, where $R_{\supp(\ve v)}$
    is an elementary set of $\supp(\ve v)$.
\end{lemma}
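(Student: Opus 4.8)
The plan is to realize the tangent cone through the edge directions at $\ve v$ and then to read those directions off the adjacency description in \autoref{lem:topadj}. First note that $\Po(\psi)$ is a polytope: taking $A=\{i\}$ in its defining inequalities gives $0\le x_i\le \psi(\{i\})$, so $\Po(\psi)$ is bounded. Hence the tangent cone $\Co$, translated to the origin, is exactly $\cone\{\,\ve w-\ve v \mid \ve w \text{ a vertex of }\Po(\psi)\text{ adjacent to }\ve v\,\}$, and every extremal ray of $\Co$ is spanned by one of these edge directions $\ve w-\ve v$. So it suffices to show that each $\ve w-\ve v$ is a positive multiple of a vector of $R_{\supp(\ve v)}$; the corresponding members of $R_{\supp(\ve v)}$ can then be taken as the extremal ray generators $\ve r_1,\dots,\ve r_l$.

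Now apply \autoref{lem:topadj} to an adjacent vertex $\ve w$. In case (i), $\Delta(\ve v,\ve w)=\{i\}$, so $\ve w-\ve v = t\,\ve e_i$ with $t\neq0$. If $t>0$ this is a positive multiple of $\ve e_i\in R_{\supp(\ve v)}$. If $t<0$, then $w_i = v_i+t<v_i$, and since $\ve w\in\Po(\psi)$ forces $w_i\ge0$ we get $v_i>0$, i.e.\ $i\in\supp(\ve v)$, so $\ve w-\ve v$ is a positive multiple of $-\ve e_i\in R_{\supp(\ve v)}$. In case (ii) we have $\Delta(\ve v,\ve w)=\{c,d\}$ with an ordered set $F=\{f_1,\dots,f_{|F|}\}$ generating $\ve v$, $f_k=c$, $f_{k+1}=d$, and $\tilde F=\{f_1,\dots,f_{k-1},f_{k+1},f_k,f_{k+2},\dots,f_{|F|}\}$ generating $\ve w$. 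Writing $F_i=\{f_1,\dots,f_i\}$, the two sequences agree on all initial segments except at index $k$, where $\tilde F_k = F_{k-1}\cup\{d\}$ while $F_k=F_{k-1}\cup\{c\}$, and $\tilde F_{k+1}=F_{k-1}\cup\{c,d\}=F_{k+1}$. Comparing the telescoping definitions gives $w_{f_i}=v_{f_i}$ for $f_i\notin\{c,d\}$, together with $v_c=\psi(F_k)-\psi(F_{k-1})$, $v_d=\psi(F_{k+1})-\psi(F_k)$, $w_d=\psi(F_{k-1}\cup\{d\})-\psi(F_{k-1})$, and $w_c=\psi(F_{k+1})-\psi(F_{k-1}\cup\{d\})$. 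Adding the last two, $w_c+w_d=\psi(F_{k+1})-\psi(F_{k-1})=v_c+v_d$, so $\ve w-\ve v=\delta(\ve e_d-\ve e_c)$ with $\delta=w_d-v_d\neq0$. If $\delta>0$ then $w_c=v_c-\delta<v_c$ and $w_c\ge0$ forces $v_c>0$, so $\ve e_d-\ve e_c\in R_{\supp(\ve v)}$; if $\delta<0$ then symmetrically $v_d>0$ and $\ve w-\ve v$ is a positive multiple of $\ve e_c-\ve e_d\in R_{\supp(\ve v)}$. Either way $\ve w-\ve v$ spans a ray of $R_{\supp(\ve v)}$, which finishes the argument.

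The step I expect to require the most care is the bookkeeping in case (ii): checking that the adjacent transposition taking $F$ to $\tilde F$ changes only the coordinates $c$ and $d$ and that those two increments sum to the same value for $\ve v$ and $\ve w$, which forces $\ve w-\ve v$ to lie along $\ve e_c-\ve e_d$. Once that identity is in hand, the sign constraint coming from $\ve w\ge\ve0$ is what places the decreasing index into $\supp(\ve v)$, and everything else reduces to the standard fact that the tangent cone of a polytope at a vertex is generated by the incident edge directions.
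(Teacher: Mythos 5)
Your proof is correct and takes essentially the same route as the paper: both invoke \autoref{lem:topadj} and, in case~(ii), perform the telescoping computation showing that the coordinate increments at $c$ and $d$ balance, so $\ve w-\ve v$ is parallel to $\ve e_d-\ve e_c$, and then use $\ve w\ge\ve0$ to place the decreasing index into $\supp(\ve v)$. The only (mild) difference is that you split the sign of $\delta=w_d-v_d$ into two cases; the paper instead asserts $c\in\supp(\ve v)$ outright, which is justified because submodularity (applied to $F_{k-1}\cup\{d\}$ and $F_k$) actually forces $\delta>0$ — your second sub-case is vacuous, but harmless, and your version has the small advantage of not needing that observation.
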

\begin{proof}
Let $\psi \colon 2^{[n]} \longrightarrow \Z$ be a integral polymatroid rank function.
    Let $\ve v$ and $\ve w$ be adjacent vertices of the polymatroid
    $\Po(\psi)$. Using Lemma \ref{lem:topadj}, if $|\Delta(\ve v, \ve w)| = 1$
    then $\ve w - \ve v = h \ve e_i$ where $h$ is some integer and $\ve e_i$ is the
    standard $i$th elementary vector for some $i \in [n]$. If $h < 0$ then certainly $i
    \in \supp(\ve v)$, else $i \in [n]$. Thus $\ve w - \ve v$, a generator of
    $\Co$, is parallel to a vector in $R_{\supp(\ve v)}$.

Let $\ve v$ and $\ve w$ be adjacent and satisfy ({\bf ii}) in Lemma
\ref{lem:topadj}, where $\Delta(\ve v,\ve w) = \{c,d\}$. Hence there exists an
$F = \{f_1,\ldots,f_{|F|}\}$ which generates $\ve v$ with $f_{k+1} = d$ and
$f_k = c$ for some integer $k$, $1 \leq k \leq |F| -1$; moreover the ordered
set $\tilde F := \{f_1,\ldots,f_{k-1},f_{k+1},f_k,f_{k+2},\ldots,f_{|F|}\}$
generates $\ve w$. First we note that $\psi(F_{k-1}) = \psi(\tilde F_{k-1})$
and $\psi(F_{k+1}) = \psi(\tilde F_{k+1})$. By assumption, we know $ v_{c}
\neq  w_{c}$, $v_{d} \neq w_{d}$ and $v_l = w_l$ for all $l
    \in [n] \setminus \{c,d\}$. Thus
\begin{align*}
    (\ve v - \ve w)_{c} & =  v_{c} -  w_{c} & = \phantom{-} & \psi(F_{k+1}) - \psi(F_k) - \left(  \psi(\tilde F_k) - \psi(\tilde F_{k-1}) \right) \\
                        &                         & = \phantom{-} & \psi(F_{k+1}) - \psi(F_k) -  \psi(\tilde F_k) + \psi(\tilde F_{k-1})  \\
    \intertext{and}
    (\ve v - \ve w)_{d} & =  v_{d} -  w_{d} & = \phantom{-} & \psi(F_{k}) - \psi(F_{k-1}) - \left(  \psi(\tilde F_{k+1}) - \psi(\tilde F_{k}) \right) \\
                        &                         & = \phantom{-} & \psi(F_{k}) - \psi(\tilde F_{k-1}) - \left(  \psi(F_{k+1}) - \psi(\tilde F_{k}) \right) \\
                        &                         & = - & \psi(F_{k+1}) + \psi(F_{k}) +  \psi(\tilde F_{k}) - \psi(\tilde F_{k-1}).    
\end{align*}
Therefore $(\ve v - \ve w)_{c} = - (\ve v - \ve w)_{d}$ and $\ve w - \ve v$ is
    parallel to $\ve e_d - \ve e_c$. Moreover, $c \in \supp(\ve v)$ since $\ve w, \ve v \geq \ve 0$
    by assumption that $\ve v, \ve w \in \Po(\psi)$. Thus $\ve w - \ve v$, a
    generator of $\Co$, is parallel to a vector in $R_{\supp(\ve v)}$.
\end{proof}

\subsection{The construction of the multivariate rational generating function}

From the knowledge of triangulations of tangent cones of matroid
polytopes (and independence matroid polytopes) we will recover the
multivariate generating functions. 
The following lemma is due to \cite{Brion88} and independently \cite{lawrence91-2}.
A proof can also be found in
\cite{barvinok:99} and \cite{beck-haase-sottile:theorema}.

\begin{lemma}[Brion--Lawrence's Theorem] \label{lem:bl}
  Let $\Po$ be a rational polyhedron
  and $V(\Po)$ be the set of vertices of~$\Po$. Then,
  \begin{equation*}
    g_\Po(\mathbf{z}) = \sum_{\ve v \in V(\Po)} g_{\Co_\Po(\ve v)}(\mathbf{z}),
  \end{equation*}
  where $\Co_\Po(\ve v)$ is the tangent cone of~$\ve v$.
\end{lemma}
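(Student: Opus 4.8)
The plan is to prove the asserted equality as an identity of rational functions in $\Q(z_1,\dots,z_n)$ by passing to the algebra of indicator functions of rational polyhedra. Let $\mathcal A$ be the $\Q$-vector space spanned by the indicator functions $[Q]\colon\R^n\to\{0,1\}$ of rational polyhedra $Q\subseteq\R^n$, and let $\mathcal L\subseteq\mathcal A$ be the subspace spanned by those $[Q]$ for which $Q$ contains an affine line. The feature that makes the statement nontrivial is that one cannot just add the Laurent series $\tilde g_{\Co_\Po(\ve v)}$: for distinct vertices $\ve v$ their domains of absolute convergence are, in general, pairwise \emph{disjoint} open subsets of $\C^n$ --- already for $\Po=[0,1]\subset\R^1$ the two tangent cones yield series converging on $\{|z|<1\}$ and on $\{|z|>1\}$ respectively. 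Hence the assertion really concerns the rational functions $g_{\Co_\Po(\ve v)}$ obtained by analytic continuation, and the natural device for explaining the cancellation is a valuation.

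I would combine two ingredients. \textbf{(I)} The assignment $Q\mapsto g_Q$, with $g_Q$ exactly the multivariate rational generating function defined in the excerpt (so $g_Q=0$ as soon as $Q$ contains a line), extends to a $\Q$-linear \emph{valuation} $\Phi\colon\mathcal A\to\Q(z_1,\dots,z_n)$; equivalently, whenever $\sum_i\alpha_i[Q_i]=0$ as a function then $\sum_i\alpha_i g_{Q_i}=0$. By its very definition $\Phi$ vanishes on $\mathcal L$. \textbf{(II)} In $\mathcal A$ one has the polyhedral identity
\begin{equation*}
  [\Po]\;-\;\sum_{\ve v\in V(\Po)}\bigl[\Co_\Po(\ve v)\bigr]\;\in\;\mathcal L .
\end{equation*}
Ingredient (II) is a direct consequence of the Brianchon--Gram relation $[\Po]=\sum_{F}(-1)^{\dim F}[\operatorname{tcone}(\Po,F)]$, the sum running over all nonempty faces $F$ of $\Po$ (with $\operatorname{tcone}(\Po,\Po)$ the affine hull of $\Po$), together with the observation that $\operatorname{tcone}(\Po,F)$ contains an affine line whenever $\dim F\ge1$; isolating the vertex terms leaves exactly a $\Q$-combination of such line-containing cones. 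Granting (I) and (II), the lemma is immediate: apply $\Phi$ to the displayed relation and use $\Q$-linearity together with $\Phi|_{\mathcal L}=0$ to obtain $g_\Po=\sum_{\ve v\in V(\Po)}g_{\Co_\Po(\ve v)}$; when $\Po$ is bounded the left side is the Laurent polynomial $\tilde g_\Po$.

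It remains to secure ingredient (I), and this is the heart of the matter. The route best matched to the present paper is to reduce to \emph{simplicial} cones: any rational polyhedron, in particular any tangent cone, can be subdivided by a triangulation --- precisely the sort produced in Corollary~\ref{elemconepoly} --- with the ensuing relation among indicator functions controlled by inclusion--exclusion over shared faces, while for a simplicial cone $K=\ve a+\cone\{\ve b_1,\dots,\ve b_k\}$ with linearly independent integer generators $\ve b_j$ one has the explicit rational function
\begin{equation*}
  g_K(\mathbf z)\;=\;\mathbf z^{\ve a}\Bigl(\textstyle\sum_{\ve m}\mathbf z^{\ve m}\Bigr)\Big/\prod_{j=1}^{k}\bigl(1-\mathbf z^{\ve b_j}\bigr),
\end{equation*}
the inner sum ranging over the lattice points of the half-open parallelepiped spanned by $\ve b_1,\dots,\ve b_k$. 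One must then verify that this prescription is independent of the triangulation chosen, so that it extends additively and consistently to a valuation on all of $\mathcal A$; this verification --- equivalently, the construction of the exponential / integer-point valuation in the tradition of Lawrence, Khovanskii--Pukhlikov, and Brion --- is the main obstacle, and precisely the step at which the disjointness of the convergence regions is overcome. Once it is available, everything above is bookkeeping, and one may alternatively simply quote it, as the paper does.
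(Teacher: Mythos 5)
The paper does not actually prove this lemma; it simply cites Brion (1988), Lawrence (1991), and the expository references \cite{barvinok:99} and \cite{beck-haase-sottile:theorema}, so there is no internal argument to compare against. Your proposal supplies a genuine proof sketch, and the route you take --- regard $Q\mapsto g_Q$ as a valuation on the indicator algebra and apply it to a polyhedral identity expressing $[\Po]$ in terms of vertex tangent cones modulo cones containing lines --- is one of the standard proofs of Brion's theorem (it is essentially the approach of Beck--Haase--Sottile). You are also right that the real technical content is ingredient (I), the existence of the valuation extending the generating-function assignment, which you candidly flag as the step one might simply quote, exactly as the paper does.

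One point in ingredient (II) deserves more care. You derive the relation
\begin{equation*}
  [\Po]-\sum_{\ve v\in V(\Po)}\bigl[\Co_\Po(\ve v)\bigr]\in\mathcal L
\end{equation*}
from the classical Brianchon--Gram identity $[\Po]=\sum_F(-1)^{\dim F}[\operatorname{tcone}(\Po,F)]$, but that identity as usually stated holds for \emph{polytopes}, whereas the lemma is asserted for arbitrary rational polyhedra, which may be unbounded. For an unbounded pointed polyhedron the naive Brianchon--Gram sum does not hold on the nose (for instance, for a ray in $\R^1$ the face lattice consists of a vertex and the ray itself, and the alternating sum of tangent cones already fails as a pointwise identity), and the correct statement is precisely the congruence modulo $\mathcal L$ that you want in (II). So (II) is true, but it should either be established directly (e.g., by a deformation or inclusion--exclusion argument for pointed polyhedra) or by citing a Brianchon--Gram-type theorem that is explicitly formulated modulo line-containing polyhedra; invoking the polytope version and ``isolating the vertex terms'' is not quite a derivation when $\Po$ is unbounded. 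This is a real gap in the write-up rather than in the strategy: the conclusion you need is correct and standard, but the path you indicate to it only works verbatim in the bounded case.

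Finally, a cosmetic observation: if $\Po$ itself contains a line, then by the paper's convention $g_\Po=0$, $V(\Po)=\emptyset$, and the identity holds trivially; it is worth stating this case separately so that the rest of the argument can assume $\Po$ pointed, which is implicitly what both (I) and (II) require.
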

Thus, we can write the multivariate generating function of $\Po$ by writing
all multivariate generating functions of all the tangent cones of the vertices
of~$\Po$.
Moreover, the map assigning to a rational polyhedron~$\Po$ its multivariate
rational generating function~$g_\Po(\ve z)$ is a \emph{valuation}, i.e., a
finitely additive measure, so it satisfies the equation
\begin{equation*}
  g_{\Po_1\cup \Po_2}(\ve z) = g_{\Po_1} + g_{\Po_2} - g_{\Po_1\cap \Po_2},
\end{equation*}
for arbitrary rational polytopes $\Po_1$~and~$\Po_2$,
the so-called \emph{inclusion--exclusion principle}.  
This allows to break a polyhedron~$\Po$ into pieces $\Po_1$~and~$\Po_2$ and to compute
the multivariate rational generating functions for the pieces (and their
intersection) separately in order to get the generating function~$g_{\Po}$. 
More generally, let us denote by~$[\Po]$ the \emph{indicator function} of~$\Po$,
i.e., the function
\begin{equation*}
  [\Po]\colon \R^n\to\R, \quad
  [\Po](\ve x) = 
  \begin{cases}
    1 & \text{if $\ve x\in \Po$} \\
    0 & \text{otherwise}.
  \end{cases}
\end{equation*}
Let
\begin{math}
  \sum_{i\in I} \epsilon_i [\Po_i] = 0
\end{math}
be an arbitrary linear identity of indicator functions of rational polyhedra
(with rational coefficients~$\epsilon_i$);
the valuation property now implies that it carries over to a linear identity 
\begin{math}
  \sum_{i\in I} \epsilon_i\, g_{\Po_i}(\ve z) = 0
\end{math}
of rational generating functions.  

Now let $\Co$ be one of the tangent cones of~$\Po$, and let $\Tr$ be a
triangulation of~$\Co$, given by its simplicial cones of maximal dimension.  Let
$\hat \Tr$ denote the set of all (lower-dimensional) intersecting proper faces
of the cones~$\Co_i\in\Tr$.  Then we can assign an integer
coefficient~$\epsilon_i$ to every cone $\Co_i \in \hat\Tr$, such that the
following identity holds:
\begin{equation*}
  \label{eq:indicator-identity-triang}
  [\Co] = \sum_{\Co_i\in \Tr} [\Co_i] + \sum_{\Co_i\in \hat \Tr} \epsilon_i\, [\Co_i].
\end{equation*}
This identity immediately carries over to an identity of multivariate
rational generating functions,
\begin{equation}
  \label{eq:ratgenfun-identity-triang}
  g_\Co(\ve z) = \sum_{\Co_i\in \Tr} g_{\Co_i}(\ve z) 
  + \sum_{\Co_i\in \hat \Tr} \epsilon_i\, g_{\Co_i}(\ve z).
\end{equation}
\begin{remark}
  Notice that
  formula~\eqref{eq:ratgenfun-identity-triang} is of exponential size, even
  when the triangulation~$\Tr$ only has polynomially many simplicial cones
  of maximal dimension.
  The reason is that, when the dimension~$n$ is
  allowed to vary, there are exponentially many intersecting proper faces in
  the set~$\hat\Tr$.  Therefore, we cannot
  use~\eqref{eq:ratgenfun-identity-triang} to compute the multivariate
  rational generating function of~$\Co$ in polynomial time for varying dimension.
\end{remark}

To obtain a shorter formula, we use the technique of \emph{half-open exact
  decompositions}~\cite{koeppe-verdoolaege:parametric}, which is a
refinement of the method of ``irrational'' perturbations
\cite{beck-sottile:irrational,koeppe:irrational-barvinok}.  We use the
following result; see also \autoref{fig:2d-triang} and
\autoref{fig:2d-triang-halfopen-good}.  
\begin{lemma}~
  \label{th:exactify-identities}
  \begin{enumerate}[\rm(a)]
  \item\label{th:exactify-identities-general-part} Let
    \begin{equation}
      \label{eq:full-source-identity}
      \sum_{i\in I_1} \epsilon_i [\Co_i] + \sum_{i\in I_2} \epsilon_i [\Co_i] = 0
    \end{equation}
    be a linear identity (with rational coefficients~$\epsilon_i$) of
    indicator functions of cones~$\Co_i\subseteq\R^n$, where the cones~$\Co_i$
    are full-dimensional for $i\in I_1$ and lower-dimensional for $i\in I_2$.
    Let each cone be given as
    \begin{align}
      \Co_i &= \bigl\{\, \ve x \in\R^n : \langle \ve b^*_{i,j}, \ve x\rangle
      \leq 0 \text{ for $j\in J_i$}\,\bigr\}.
    \end{align}
    Let $\ve y\in\R^n$ be a vector such that $\langle \ve b^*_{i,j}, \ve
    y\rangle \neq 0$ for all $i\in I_1\cup I_2$, $j\in J_i$.  For $i\in I_1$,
    we define the ``half-open cone''
    \begin{equation}
      \label{eq:half-open-by-y}
      \begin{aligned}
        \tilde \Co_i = \Bigl\{\, \ve x\in\R^d : {}& \langle \ve b^*_{i,j}, \ve
        x\rangle \leq 0
        \text{ for $j\in J_i$ with $\langle \ve b^*_{i,j}, \ve y \rangle < 0$,} \\
        & \langle \ve b^*_{i,j}, \ve x\rangle < 0 \text{ for $j\in J_i$ with
          $\langle \ve b^*_{i,j}, \ve y \rangle > 0$} \,\Bigr\}.
      \end{aligned}
    \end{equation}
    Then
    \begin{equation}
      \label{eq:target-identity}
      \sum_{i\in I_1} \epsilon_i [\tilde \Co_i] = 0.
    \end{equation}
\item\label{th:exactify-identities-triang-part}
  In particular, let 
  \begin{equation}
    \label{eq:source-identity-triang}
    [\Co] = \sum_{\Co_i\in \Tr} [\Co_i] + \sum_{\Co_i\in \hat \Tr} \epsilon_i\, [\Co_i]
  \end{equation}
  be the identity corresponding to a triangulation of the cone~$\Co$, where
  $\Tr$ is the set of simplicial cones of maximal dimension and $\hat \Tr$ is
  the set of intersecting proper faces.
  Then there exists a polynomial-time algorithm to construct a vector~$\ve
  y\in\Q^n$ such that the above construction yields the identity
  \begin{equation}
    \label{eq:half-open-triang}
    [\Co] = \sum_{\Co_i\in \Tr} [\tilde \Co_i],
  \end{equation}
  which describes a partition of $\Co$ into half-open cones of maximal dimension.
  \end{enumerate}
\end{lemma}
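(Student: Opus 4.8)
The plan is to derive part~(\ref{th:exactify-identities-general-part}) by applying a single linear ``limit operator'' to the source identity~\eqref{eq:full-source-identity}, and then to obtain part~(\ref{th:exactify-identities-triang-part}) by specializing (\ref{th:exactify-identities-general-part}) to the triangulation identity~\eqref{eq:source-identity-triang} for a vector~$\ve y$ that is simultaneously generic and points into~$\Co$, constructed in polynomial time. For the fixed vector~$\ve y$, set
\begin{equation*}
  (L_{\ve y} f)(\ve x) \;:=\; \lim_{\epsilon\to 0^+} f(\ve x + \epsilon\ve y)
  \qquad\text{for } f\colon\R^n\to\R .
\end{equation*}
Because only finitely many hyperplanes $\{\ve x : \langle \ve b^*_{i,j},\ve x\rangle = 0\}$ occur and $\ve y$ satisfies $\langle\ve b^*_{i,j},\ve y\rangle\neq 0$ for all of their normals, for every fixed~$\ve x$ the value $[\Co_i](\ve x+\epsilon\ve y)$ is constant for all sufficiently small $\epsilon>0$; hence the limit exists pointwise for every finite linear combination of the~$[\Co_i]$, so $L_{\ve y}$ is a well-defined linear operator on the span of these indicator functions, and applying it to~\eqref{eq:full-source-identity} yields~\eqref{eq:target-identity} as soon as we know $L_{\ve y}$ on a single cone.

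Two short computations supply this. For a full-dimensional~$\Co_i$ I would verify, constraint by constraint, that $\ve x + \epsilon\ve y\in\Co_i$ for all small $\epsilon>0$ precisely when, for each $j\in J_i$, either $\langle\ve b^*_{i,j},\ve x\rangle<0$, or $\langle\ve b^*_{i,j},\ve x\rangle = 0$ and $\langle\ve b^*_{i,j},\ve y\rangle<0$; by~\eqref{eq:half-open-by-y} this is exactly the condition $\ve x\in\tilde\Co_i$, so $L_{\ve y}[\Co_i] = [\tilde\Co_i]$. For a lower-dimensional~$\Co_i$ I would show $L_{\ve y}[\Co_i] = 0$: since $\Co_i$ is not full-dimensional, its defining inequalities cannot all hold strictly at once (a suitable convex combination of points each satisfying a different one strictly would satisfy all of them strictly, hence be an interior point), so some $\langle\ve b^*_{i,j_0},\cdot\rangle$ vanishes identically on~$\Co_i$; then $\lin(\Co_i)\subseteq\{\ve x:\langle\ve b^*_{i,j_0},\ve x\rangle = 0\}$, and since $\langle\ve b^*_{i,j_0},\ve y\rangle\neq 0$ we get $\ve y\notin\lin(\Co_i)$, whence $\ve x + \epsilon\ve y\notin\lin(\Co_i)\supseteq\Co_i$ for every $\ve x\in\Co_i$ and $\epsilon\neq 0$ (and for $\ve x\notin\Co_i$ the point also leaves $\Co_i$ for small~$\epsilon$). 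Thus $L_{\ve y}$ turns~\eqref{eq:full-source-identity} into $\sum_{i\in I_1}\epsilon_i[\tilde\Co_i] = 0$, which is~\eqref{eq:target-identity}.

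For part~(\ref{th:exactify-identities-triang-part}) I would rewrite~\eqref{eq:source-identity-triang} as $[\Co] - \sum_{\Co_i\in\Tr}[\Co_i] - \sum_{\Co_i\in\hat\Tr}\epsilon_i[\Co_i] = 0$ and, after passing to the rational subspace~$\lin(\Co)$ (so that~$\Co$ and the~$\Co_i\in\Tr$ become full-dimensional and the faces~$\Co_i\in\hat\Tr$ lower-dimensional), apply part~(\ref{th:exactify-identities-general-part}). Here $\ve y$ must be chosen so that (i) it is generic for the facet normals of all cones in $\Tr\cup\hat\Tr\cup\{\Co\}$, and (ii) $\langle\ve b,\ve y\rangle<0$ for every facet normal~$\ve b$ of~$\Co$, so that the half-open version of~$\Co$ is~$\Co$ itself and the conclusion reads $[\Co] = \sum_{\Co_i\in\Tr}[\tilde\Co_i]$. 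The observation that keeps this polynomial, even though $\hat\Tr$ has exponentially many faces, is that every face $\cone\{\ve g_s : s\in S\}$ of a maximal simplicial cone $\cone\{\ve g_1,\dots,\ve g_{\dim\Co}\}$ is cut out inside~$\lin(\Co)$ using only that simplex's own facet normals (with equalities for the indices outside~$S$); hence condition~(i) reduces to avoiding the polynomially many hyperplanes orthogonal to the facet normals of the polynomially many maximal simplices, together with the facets of~$\Co$. A relative-interior point of~$\Co$ (for instance the sum of its extreme-ray generators) satisfies~(ii), and a small rational perturbation of it, computed in polynomial time, additionally achieves~(i). Reading $[\Co] = \sum_{\Co_i\in\Tr}[\tilde\Co_i]$ pointwise then shows that each point of~$\Co$ lies in exactly one~$\tilde\Co_i$ and nothing outside~$\Co$ lies in any, i.e., the~$\tilde\Co_i$ form a partition of~$\Co$ into half-open cones of maximal dimension, which is~\eqref{eq:half-open-triang}.

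I expect the main obstacle to be the lower-dimensional case of part~(\ref{th:exactify-identities-general-part}): turning the intuition ``the perturbation annihilates every lower-dimensional cone'' into a rigorous statement, which really amounts to deducing $\ve y\notin\lin(\Co_i)$ from genericity of~$\ve y$ with respect only to facet normals, handled by the averaging argument above. A secondary point is keeping the construction in part~(\ref{th:exactify-identities-triang-part}) polynomial despite $|\hat\Tr|$ being exponential, which is resolved by noting that only the facet normals of the maximal simplices are relevant.
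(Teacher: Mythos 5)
Your proof is correct and self-contained. The paper itself offers no argument for this lemma: it simply cites Theorem~3 and Section~2 of \cite{koeppe-verdoolaege:parametric} for parts~(a) and~(b) respectively, so there is no ``paper proof'' to compare against line by line. What you give is exactly the standard irrational-perturbation argument underlying that reference (the paper even flags the lemma as ``a refinement of the method of `irrational' perturbations''): the one-sided limit operator $L_{\ve y}$ is well-defined and linear on the span of the $[\Co_i]$ because $\ve y$ avoids every defining hyperplane; the constraint-by-constraint computation correctly identifies $L_{\ve y}[\Co_i]=[\tilde\Co_i]$ on full-dimensional cones; and the averaging argument supplies the tight inequality $\ve b^*_{i,j_0}$ needed to see $\ve y\notin\lin(\Co_i)$, hence $L_{\ve y}[\Co_i]=0$ on lower-dimensional cones. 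For part~(b), you correctly notice the two potential obstructions to a polynomial-time construction --- the exponential size of $\hat\Tr$ and the possibly large facet description of $\Co$ --- and resolve both: faces of a simplex reuse only the simplex's own facet normals (with sign flips for equalities), and choosing $\ve y$ in the relative interior of $\Co$ (e.g., a perturbed sum of the ray generators) makes all facet inequalities of $\Co$ strict without enumerating them, which both preserves $\tilde\Co=\Co$ and gives the genericity required for $\Co$'s normals. No gaps.
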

\begin{proof}
  Part~(\ref{th:exactify-identities-general-part}) is a slightly less general
  form of Theorem~3 in \cite{koeppe-verdoolaege:parametric}.
  Part~(\ref{th:exactify-identities-triang-part}) follows from the discussion
  in section~2 of  \cite{koeppe-verdoolaege:parametric}.
\end{proof}

\begin{figure}[t]
  \centering
  \ifpdf
    \input{2d-triang.pdf_t}
    \else
    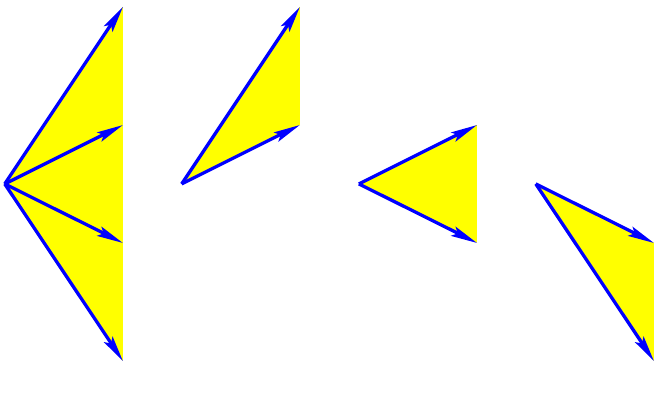
    \fi
  \caption{An identity, valid modulo lower-dimensional cones, corresponding to
    a polyhedral subdivision of a cone}
  \label{fig:2d-triang}
\end{figure}
\begin{figure}[t]
  \ifpdf
    \input{2d-triang-halfopen-good.pdf_t}
    \else
    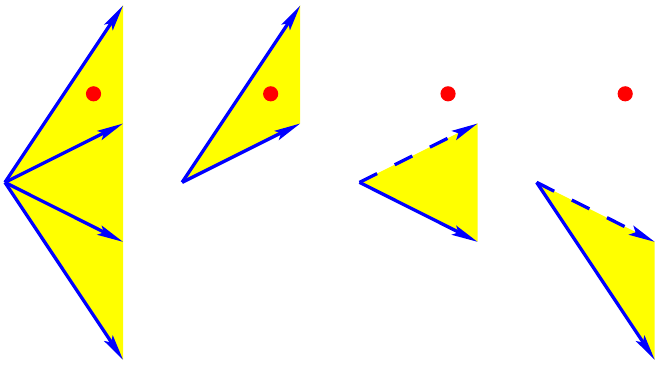
    \fi
  \caption{The technique of half-open exact decomposition.  The relative
    location of the vector $\ve y$ (represented by a dot) determines which defining
    inequalities are strict (broken lines) and which are weak (solid lines).}
  \label{fig:2d-triang-halfopen-good}
\end{figure}


Since the cones in a triangulation~$\Tr$ of all tangent cones~$\Co_\Po(\ve v)$
of our polytopes are unimodular by \autoref{lem:eluni}, we can efficiently
write the multivariate generating functions of their half-open counterparts.
\begin{lemma}[Lemma~9 in \cite{koeppe-verdoolaege:parametric}] \label{th:unimodular-formula}
  Let $\tilde \Co\subseteq\R^n$ be an $N$-dimensional half-open pointed
  simplicial affine cone with an integral apex~$\ve v\in\Z^n$ and the
  ray description 
  \begin{align}
    \label{eq:explicit}
    \tilde \Co&= \Bigl\{\, \ve v + \textstyle\sum_{j=1}^N \lambda_j \ve b_j :{} 
    \lambda_j \geq 0 \text{ for $j\in J_\leq$ and} \ 
    \lambda_j > 0 \text{ for $j\in J_<$} 
    \,\Bigr\}
  \end{align}
  where $J_\leq\cup J_< = \{1,\dots,N\}$ and $\ve b_j\in\Z^n\setminus\{\ve0\}$.
  We further assume that $\tilde \Co$ is unimodular, i.e., the vectors~$\ve
  b_j$ form a basis of the lattice $(\R\ve b_1+\dots+\R\ve b_N)\cap\Z^n$.
  Then the unique point in the fundamental parallelepiped of the half-open
  cone~$\tilde \Co$ is
  \begin{equation}
    \ve a = \ve v + \sum_{j\in J_<} \ve b_j,
  \end{equation}
  and the generating function of $\Co$ is given by
  \begin{equation}
    g_\Co(\ve{z}) = \frac{\ve{z}^{\ve a}}{\prod_{j=1}^N (1 - \ve{z}^{\ve b_j})}.
  \end{equation}
\end{lemma}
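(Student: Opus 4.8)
The plan is to enumerate the lattice points of~$\tilde\Co$ directly and then sum the resulting geometric series. First I would observe that $\tilde\Co$ is pointed: it is contained in $\cone\{\ve b_1,\dots,\ve b_N\}$, which contains no line because the $\ve b_j$ are linearly independent. Hence, as recalled at the beginning of this section (see also \cite{barvinok:99}), the formal sum $\sum_{\ve x\in\tilde\Co\cap\Z^n}\ve z^{\ve x}$ converges absolutely on a non-empty open set $U\subseteq\C^n$ to the rational function $g_{\tilde\Co}\in\Q(z_1,\dots,z_n)$, and two rational functions agreeing on a non-empty open set coincide; so it suffices to prove the claimed formula on any non-empty open subset of~$U$.

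Next I would describe $\tilde\Co\cap\Z^n$ explicitly. Since $\ve v\in\Z^n$ and, by the unimodularity hypothesis, $\ve b_1,\dots,\ve b_N$ form a $\Z$-basis of $\Lambda:=(\R\ve b_1+\dots+\R\ve b_N)\cap\Z^n$, a point $\ve v+\sum_{j=1}^N\lambda_j\ve b_j$ of the affine span lies in~$\Z^n$ precisely when $\sum_j\lambda_j\ve b_j\in\Lambda$, i.e. precisely when every $\lambda_j$ is an integer. Intersecting with the inequalities in~\eqref{eq:explicit} then yields
\begin{equation*}
  \tilde\Co\cap\Z^n=\Bigl\{\,\ve v+{\textstyle\sum_{j=1}^N} m_j\ve b_j \;:\; m_j\in\Z_{\geq 0}\ \text{for}\ j\in J_\leq,\ m_j\in\Z_{\geq 1}\ \text{for}\ j\in J_<\,\Bigr\}.
\end{equation*}
Running the same computation on the half-open fundamental parallelepiped, in which each coefficient ranges over $[0,1)$ for $j\in J_\leq$ and over $(0,1]$ for $j\in J_<$, shows that its unique lattice point is the one with $m_j=0$ for $j\in J_\leq$ and $m_j=1$ for $j\in J_<$, namely $\ve a=\ve v+\sum_{j\in J_<}\ve b_j$; this establishes the first assertion.

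Finally I would make the substitution $m_j\mapsto m_j+1$ for $j\in J_<$, a bijection between $\tilde\Co\cap\Z^n$ and $\bigl\{\,\ve a+\sum_{j=1}^N m_j\ve b_j : \ve m\in\Z_{\geq 0}^N\,\bigr\}$. On the non-empty open subset of~$U$ where $\abs{\ve z^{\ve b_j}}<1$ for all~$j$ — non-empty by linear independence of the $\ve b_j$ — the series factors as a product of geometric series:
\begin{equation*}
  g_{\tilde\Co}(\ve z)=\sum_{\ve m\in\Z_{\geq 0}^N}\ve z^{\ve a+\sum_{j=1}^N m_j\ve b_j}=\ve z^{\ve a}\prod_{j=1}^N\sum_{m_j\geq 0}\bigl(\ve z^{\ve b_j}\bigr)^{m_j}=\frac{\ve z^{\ve a}}{\prod_{j=1}^N(1-\ve z^{\ve b_j})},
\end{equation*}
which, by the opening remark, is the asserted identity of rational functions. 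The argument is essentially bookkeeping; the only point requiring care is translating the weak versus strict inequalities of~\eqref{eq:explicit} into the conditions $m_j\geq 0$ versus $m_j\geq 1$ and, accordingly, pinning down the shifted base point as exactly $\ve a=\ve v+\sum_{j\in J_<}\ve b_j$. I do not anticipate a genuine obstacle.
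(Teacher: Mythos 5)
Your proof is correct. Note that the paper does not prove this lemma itself---it is cited verbatim from \cite{koeppe-verdoolaege:parametric}---so there is no in-paper argument to compare against, but your derivation is the standard one for unimodular cones: you use the $\Z$-basis property to identify $\tilde\Co\cap\Z^n$ with $\{\ve a+\sum_j m_j\ve b_j:\ve m\in\Z_{\geq0}^N\}$ after the shift absorbing the strict inequalities, read off the unique lattice point of the half-open parallelepiped, and sum the resulting product of geometric series on a suitable open domain, invoking the identity theorem to upgrade the pointwise identity to one of rational functions. All the steps are justified (including the existence of the convergence domain, which you correctly reduce to surjectivity of $\ve c\mapsto(\inner{\ve c,\ve b_j})_j$), and this is essentially the argument one would find in the cited reference and in Barvinok's expositions.
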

\smallbreak

Taking all results together, we obtain:
\begin{corollary}
  \label{th:matroid-multiratgenfun-summary}
  Let $r$ be a fixed integer.  There exist algorithms that, given 
  \begin{enumerate}[(a)]
  \item a matroid $M$ on $n$ elements, presented by an
    evaluation oracle for its rank function~$\varphi$, which is bounded above
    by~$r$, or
  \item an evaluation oracle for an integral polymatroid rank function $\psi \colon 2^{[n]}
    \longrightarrow \mathbb{N}$, which is bounded above by~$r$,
  \end{enumerate}
  compute in time polynomial in~$n$ vectors $\ve a_i\in \Z^n$, $\ve
  b_{i,j}\in\Z^n\setminus\{\ve0\}$, and $\ve v_i\in\Z^n$ for $i\in I$ (a
  polynomial-size index set) and $j=1,\dots,N$, where $N\leq n$, such that
  the multivariate generating function of $\Po(M)$, $\Po^\In(M)$ and
  $\Po(\psi)$, respectively, is the sum of rational functions
  \begin{align}
    g_{\Po}(\ve z) &= \sum_{i\in I} \frac{\ve z^{\ve a_i}} {\prod_{j=1}^N
      (1-\ve z^{\ve b_{i,j}})}
  \end{align}
  and  the $k$-th dilation of the polytope has the multivariate rational
  generating function 
  \begin{align}
    g_{k\Po}(\ve z) &= \sum_{i\in I} \frac{\ve z^{\ve a_i+(k-1)\ve v_i}}
    {\prod_{j=1}^N (1-\ve z^{\ve b_{i,j}})}.
  \end{align}
\end{corollary}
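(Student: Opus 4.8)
The plan is to assemble the pieces proven earlier in this section into a single algorithm. The starting point is Brion--Lawrence's Theorem (\autoref{lem:bl}): the multivariate generating function $g_{\Po}(\ve z)$ is the sum $\sum_{\ve v\in V(\Po)} g_{\Co_\Po(\ve v)}(\ve z)$ over the vertices. For all three families of polytopes we first argue that the vertex set $V(\Po)$ has polynomial size and can be enumerated in polynomial time: for $\Po(M)$ and $\Po^\In(M)$ this is the discussion after \autoref{lem:adj} (the number of bases is at most $\binom{n}{r}$, and independent sets correspond to vertices of $\Po^\In(M)$), and for $\Po(\psi)$ this is \autoref{lem:polyvert} together with \autoref{lem:polyenum}. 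So it suffices to produce, for each vertex $\ve v$, a short sum of rational functions for $g_{\Co_\Po(\ve v)}(\ve z)$, and then concatenate.

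For a fixed vertex $\ve v$, the key structural fact is that its tangent cone, translated to the origin, is generated by extremal rays lying in an elementary set $R_A$ with $|A|\le r$: for matroid and independence matroid polytopes this follows from \autoref{lem:adj} (with $A$ the support of the corresponding set, which has size $\le r$ by the rank bound), and for polymatroids this is exactly \autoref{lem:polyelem} (with $A=\supp(\ve v)$, again of size $\le r$ since $\sum_{i\in\supp(\ve v)} v_i \le \psi([n]) \le r$ and the entries are positive integers). Then I would invoke \autoref{elemconepoly} (built on \autoref{thm:ept} and \autoref{lem:elsimpbnd}) to compute, in polynomial time, a triangulation $\Tr$ of $\Co_\Po(\ve v)$ into simplicial subcones, each having $\ve 0$ as a ray; by \autoref{lem:eluni} every such simplicial cone is unimodular. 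Next, apply \autoref{th:exactify-identities}\ref{th:exactify-identities-triang-part} to construct in polynomial time a vector $\ve y$ turning $\Tr$ into a half-open exact decomposition $[\Co] = \sum_{\Co_i\in\Tr}[\tilde\Co_i]$; this passes to the identity $g_{\Co}(\ve z) = \sum_{\Co_i\in\Tr} g_{\tilde\Co_i}(\ve z)$ by the valuation property, and crucially it only involves the polynomially many maximal cones, not the exponentially many proper faces (the point of the Remark preceding \autoref{th:exactify-identities}). Finally, \autoref{th:unimodular-formula} gives each $g_{\tilde\Co_i}(\ve z)$ explicitly as $\ve z^{\ve a_i}/\prod_{j=1}^N(1-\ve z^{\ve b_{i,j}})$ with $N=\dim\Co\le n$ and $\ve a_i = \ve v + \sum_{j\in J_<}\ve b_j$. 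Summing over all vertices and all subcones yields the first formula, with $I$ of polynomial size since it is a polynomial number of vertices times a polynomial number of subcones each.

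For the dilation statement, I would observe that the $k$-th dilate $k\Po$ has the same set of tangent cone directions: its vertices are $k\ve v_i$ for the original vertices $\ve v_i$, and the translated tangent cone at $k\ve v_i$ is the same cone $\Co$ as at $\ve v_i$. Hence the same triangulation and the same half-open decomposition apply verbatim, and \autoref{th:unimodular-formula} now produces the numerator exponent $k\ve v_i + \sum_{j\in J_<}\ve b_j = \ve a_i + (k-1)\ve v_i$ (recording $\ve v_i$ as the apex of the $i$-th cone, which the algorithm already knows). This gives the second formula with no additional computation.

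The main obstacle, and the reason the earlier lemmas were needed, is the tension flagged in the Remark: a naive triangulation identity for $g_\Co$ has exponentially many terms because $\hat\Tr$ is exponentially large in the varying dimension $n$. The half-open exact decomposition of \autoref{th:exactify-identities}\ref{th:exactify-identities-triang-part} is precisely what kills the lower-dimensional terms, so the real content of the proof is checking that the hypotheses of that lemma are met for our cones and that all the intermediate objects (vertex list, triangulation, vector $\ve y$, unimodular data) are computed within polynomial time; each of these is handed to us by an earlier result, so the proof itself is mostly bookkeeping of the polynomial-time bounds.
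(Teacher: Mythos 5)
Your proposal matches the paper's proof essentially step by step: Brion--Lawrence reduces to tangent cones, polynomially many vertices can be enumerated (\autoref{lem:adj}, \autoref{lem:polyvert}, \autoref{lem:polyenum}), tangent cones live in elementary sets $R_A$ (\autoref{lem:polyelem}), triangulations are polynomial-size, unimodular, and computable (\autoref{lem:elsimpbnd}, \autoref{lem:eluni}, \autoref{thm:ept}/\autoref{elemconepoly}), and the half-open decomposition of \autoref{th:exactify-identities} with \autoref{th:unimodular-formula} produces the short rational-function formula. The only thing you add beyond the paper's one-paragraph proof is the explicit derivation of the $g_{k\Po}$ formula (observing that the recession direction of the tangent cone at $k\ve v_i$ coincides with that at $\ve v_i$, so only the apex changes), which the paper's proof leaves implicit; that argument is correct.
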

\begin{proof}
    Lemma \ref{lem:bl} implies that finding the multivariate generating
    function of $\Po(M)$, $\Po^\In(M)$ or $\Po(\psi)$ can be reduced to
    finding the multivariate generating functions of their tangent cones.
    Moreover, $\Po(M)$, $\Po^\In(M)$ and $\Po(\psi)$ have only polynomially in
    $n$ many vertices as described in subsection \ref{subsec:ontangent} or
    Lemma \ref{lem:polyvert}. Enumerating their vertices can be done in
    polynomial time by Lemma \ref{lem:adj} or \ref{lem:polyenum}. 

    Given a vertex $\ve v$ of $\Po(M)$, $\Po^\In(M)$ or $\Po(\psi)$, its neighbors
    can be computed in polynomial time by Lemma \ref{lem:adj} or
    \ref{lem:polyenum}. The tangent cone $\Co(\ve v)$ at $\ve v$ is generated by
    elements in $R_A$, where $R_A$ is an elementary set for some $A \subseteq [n]$.
    See subsection \ref{subsec:ontangent} or Lemma \ref{lem:polyelem}. We also proved
    in Lemma \ref{lem:eluni} that every triangulation of $\Co(\ve v)$ generated by elements in
    $R_A$ is unimodal and Lemma \ref{lem:elsimpbnd} states that any
    triangulation of $\Co(\ve v)$ has at most a polynomial in $n$ number of top-dimensional
    simplices. Moreover, a triangulation of the cone $\Co(\ve v)$ can be computed
    in polynomial time by Lemma \ref{thm:ept}. Finally, using Lemmas
    \ref{th:exactify-identities} and \ref{th:unimodular-formula} we can write
    the polynomial sized multivariate generating function of $\Co(\ve v)$ in polynomial time.
    Therefore we can write the multivariate generating function of $\Po(M)$,
    $\Po^\In(M)$ or $\Po(\psi)$ in polynomial time.
\end{proof}

\subsection{Polynomial-time specialization of rational generating functions in
  varying dimension}

We now compute the Ehrhart polynomial $i(\Po, k) = \#(k\Po\cap\Z^n)$
from the multivariate rational generating function $g_{k\Po}(\ve z)$
of \autoref{th:matroid-multiratgenfun-summary}.  This amounts to the
problem of evaluating or \emph{specializing} a rational generating
function $g_{k\Po}(\ve z)$, depending on a parameter~$k$, at the
point~$\ve z=\ve 1$. This is a pole of each of its summands but a
regular point (removable singularity) of the function itself.  From
now on we call this the \emph{specialization problem}. We explain a
very general procedure to solve it which we hope will allow future
applications.

To this end,
let the generating function of a polytope~$\Po\subseteq\R^n$ be given in the form
\begin{equation}
  \label{eq:generating-function-0}
  g_\Po(\ve z) = \sum_{i\in I} \epsilon_i \frac{\ve
    z^{\ve a_i}}{\prod_{j=1}^{s_i} (1-\ve z^{\ve b_{ij}})}
\end{equation}
where $\epsilon_i\in\{\pm1\}$, $\ve a_i\in\Z^n$, and $\ve
b_{ij}\in\Z^n\setminus\{\ve0\}$.  Let $s = \max_{i\in I} s_i$ be the maximum
number of binomials in the denominators.  In general, if $s$ is allowed to
grow, more poles need to be considered for each summand, so the evaluation
will need more computational effort.

In previous literature, the specialization problem has been considered, but not in
sufficient generality for our purpose.
In the original paper by~\citet[Lemma~4.3]{bar}, the dimension~$n$ is
fixed, and each summand has exactly $s_i=n$ binomials in the denominator.  The
same restriction can be found in the survey by~\citet{barvinok:99}.  In the more
general algorithmic theory of monomial substitutions developed by
\citet{barvinok-woods-2003,Woods:thesis}, there is no assumption on the
dimension~$n$, but the number~$s$ of binomials in the denominators is fixed.  The
same restriction appears in the paper by~\citet[Lemma
2.15]{verdoolaege-woods-2005}.  In a recent paper, \citet[section~5]{barvinok-2006-ehrhart-quasipolynomial} 
gives a polynomial-time algorithm for the specialization problem for rational
functions of the form 
\begin{equation}
  \label{eq:generating-function-with-exponents}
  g(\ve z) = \sum_{i\in I} \epsilon_i \frac{\ve
    z^{\ve a_i}}{\prod_{j=1}^{s} {(1-\ve z^{\ve b_{ij}})}^{\gamma_{ij}}}
\end{equation}
where the dimension~$n$ is fixed, the number~$s$ of different binomials in
each denominator equals~$n$, but 
the multiplicity $\gamma_{ij}$ is varying.  

We will show that the technique from \citet[section~5]{barvinok-2006-ehrhart-quasipolynomial} can be implemented
in a way such that we obtain a polynomial-time algorithm even for the case of
a general formula~\eqref{eq:generating-function-0},
when the dimension and the number of binomials are allowed to grow.

\begin{theorem}[Polynomial-time specialization]
  \label{th:specialization-polytime-varydim}
  \begin{enumerate}[\rm(a)]
  \item There exists an algorithm for computing the
    specialization of a rational function of the form
    \begin{equation}
      \label{eq:generating-function-01}
      g_\Po(\ve z) = \sum_{i\in I} \epsilon_i \frac{\ve
        z^{\ve a_i}}{\prod_{j=1}^{s_i} (1-\ve z^{\ve b_{ij}})}
    \end{equation}
    at its removable singularity~$\ve z=\ve 1$, 
    which runs in time polynomial in the encoding size of its data
    $\epsilon_i\in\Q$, $\ve a_i\in\Z^n$ for $i\in I$ and 
    $\ve b_{ij}\in\Z^n$ for $i\in I$, $j=1,\dots,s_i$, 
    even when the dimension~$n$ and
    the numbers~$s_i$ of terms in the denominators are not fixed.
  \item In particular, there exists a polynomial-time algorithm that, given 
    data $\epsilon_i\in\Q$, $\ve a_i\in\Z^n$ for $i\in I$ and 
    $\ve b_{ij}\in\Z^n$ for $i\in I$, $j=1,\dots,s_i$ describing a rational
    function in the form~\eqref{eq:generating-function-01},  
    computes 
    a vector $\velambda\in\Q^n$ with $\inner{\velambda, \ve b_{ij}} \neq
    0$ for all $i,j$ 
    and rational weights $w_{i,l}$
    for $i\in I$ and $l=0,\dots,s_i$. 
    Then the number of integer points is given by
    \begin{equation}
      \#(\Po\cap\Z^n) 
      = \sum_{i\in I} \epsilon_i \sum_{l=0}^{s_i} w_{i,l} \inner{\velambda, \ve a_i}^l.
    \end{equation}
  \item Likewise, given a parametric rational function for the dilations of an
    integral polytope~$\Po$, 
    \begin{align}\label{eq:generating-function-dil}
      g_{k\Po}(\ve z) 
      &= \sum_{i\in I} \epsilon_i \frac{\ve z^{\ve a_i+(k-1)\ve v_i}} 
      {\prod_{j=1}^d (1-\ve z^{\ve b_{i,j}})},
    \end{align}
    the Ehrhart polynomial $i(\Po, k) = \#(k\Po\cap\Z^n)$ is given by the explicit formula
    \begin{equation}
      i(\Po, k) = 
      \sum_{m=0}^M \paren{
        \sum_{i\in I} 
        \epsilon_i 
        \inner{\velambda,\ve v_i}^m 
        \sum_{l=m}^{s_i} \binom{l}{m} w_{i,l} 
        \inner{\velambda,\ve a_i - \ve v_i}^{l-m} 
      } k^m,
    \end{equation}
    where $M = \min\{s,\dim\Po\}$.
  \end{enumerate}
\end{theorem}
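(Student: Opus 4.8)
The plan is to reduce the multivariate evaluation at $\ve z=\ve1$ to a one-variable constant-term computation along a rational curve through~$\ve1$, and then to carry out that computation summand by summand using \emph{truncated} power series, so that no Todd polynomial is ever expanded in full. Concretely, I would first produce in polynomial time a vector $\velambda\in\Z^n$ with $\inner{\velambda,\ve b_{ij}}\neq0$ for every $i\in I$ and $j=1,\dots,s_i$, and with all pairings $\inner{\velambda,\ve b_{ij}}$ and $\inner{\velambda,\ve a_i}$ of encoding size polynomial in the input. This is possible because the data lists only polynomially many vectors~$\ve b_{ij}$: for $\velambda_t:=(1,t,t^2,\dots,t^{n-1})$ the pairing $\inner{\velambda_t,\ve b}$ is, for each fixed $\ve b\in\Z^n\setminus\{\ve0\}$, a nonzero polynomial in~$t$ of degree at most $n-1$, hence vanishes for at most $n-1$ integers~$t$; so some $t\in\{1,\dots,n\sum_i s_i+1\}$ works, and we fix such a $\velambda:=\velambda_t$, whose entries $t^k$ then have polynomially many bits.

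Substitute $\ve z=\exp(\tau\velambda)$, i.e.\ $z_m=e^{\tau\lambda_m}$. Writing $\alpha_i:=\inner{\velambda,\ve a_i}$ and $\beta_{ij}:=\inner{\velambda,\ve b_{ij}}$, the $i$th summand becomes $\epsilon_i\,e^{\tau\alpha_i}\big/\prod_{j=1}^{s_i}\bigl(1-e^{\tau\beta_{ij}}\bigr)$, a function of the single variable~$\tau$ with a pole of order exactly~$s_i$ at $\tau=0$ and no other singularity near~$0$ (here $\beta_{ij}\neq0$ is used). Since $g_\Po$ is regular at~$\ve1$, the composition $\tau\mapsto g_\Po(e^{\tau\velambda})$ is regular at $\tau=0$ with value $g_\Po(\ve1)=\#(\Po\cap\Z^n)$; hence this number equals the sum over $i\in I$ of the constant terms (coefficients of~$\tau^0$) of the individual summands, their polar parts cancelling. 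With $f(x):=x/(e^x-1)=\sum_{k\ge0}(B_k/k!)x^k$ one rewrites the $i$th summand as
\[
  \epsilon_i\,\frac{(-1)^{s_i}}{\prod_{j=1}^{s_i}\beta_{ij}}\;\tau^{-s_i}\,e^{\tau\alpha_i}\prod_{j=1}^{s_i}f(\tau\beta_{ij}),
\]
so its constant term is $\epsilon_i(-1)^{s_i}\bigl(\prod_j\beta_{ij}\bigr)^{-1}$ times the coefficient of $\tau^{s_i}$ in $e^{\tau\alpha_i}\prod_j f(\tau\beta_{ij})$. Expanding the exponential and grouping by powers of~$\alpha_i$,
\[
  \bigl[\tau^{s_i}\bigr]\Bigl(e^{\tau\alpha_i}\prod_{j=1}^{s_i}f(\tau\beta_{ij})\Bigr)
  =\sum_{l=0}^{s_i}\frac{\alpha_i^l}{l!}\,\bigl[\tau^{s_i-l}\bigr]\prod_{j=1}^{s_i}f(\tau\beta_{ij}).
\]
Setting $w_{i,l}:=(-1)^{s_i}\bigl(\prod_j\beta_{ij}\bigr)^{-1}\frac{1}{l!}\,[\tau^{s_i-l}]\prod_j f(\tau\beta_{ij})$ then gives exactly $\#(\Po\cap\Z^n)=\sum_{i\in I}\epsilon_i\sum_{l=0}^{s_i}w_{i,l}\inner{\velambda,\ve a_i}^l$, which is part~(b); part~(a) follows once the $w_{i,l}$ are shown computable in polynomial time.

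For the complexity, the Bernoulli numbers $B_0,\dots,B_s$ with $s=\max_i s_i$ are computable in polynomial time and have numerators bounded by~$s!$ and, by the von Staudt--Clausen theorem, denominators dividing a product of primes at most~$s+1$; all of polynomial bit-size. For each~$i$ the truncated series $\prod_{j=1}^{s_i}f(\tau\beta_{ij})\bmod\tau^{s_i+1}$ is formed by multiplying the $s_i$ factors one at a time---$O(s_i)$ multiplications of polynomials of degree at most~$s_i$, i.e.\ $O(s_i^3)$ rational operations---and a routine induction bounds the bit-sizes of the coefficients of the partial products polynomially in~$s_i$ and in the sizes of the~$\beta_{ij}$. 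The decisive point---and the reason the argument survives when $n$ and the $s_i$ both grow---is precisely that one manipulates these truncated one-variable products rather than going through the classical route of Todd polynomials in $s_i$ variables, whose monomial expansion has exponential size; this is the refinement of the method of \citet[section~5]{barvinok-2006-ehrhart-quasipolynomial}. Summing the polynomially many contributions proves~(a); I expect this bit-size bookkeeping, together with the construction of~$\velambda$, to be the only genuinely technical ingredient, everything else being formal power-series manipulation.

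For part~(c) I would apply the same substitution to $g_{k\Po}$; the $i$th summand becomes $\epsilon_i\,e^{\tau(\alpha_i+(k-1)\nu_i)}\big/\prod_{j=1}^{s_i}\bigl(1-e^{\tau\beta_{ij}}\bigr)$ with $\nu_i:=\inner{\velambda,\ve v_i}$, and the constant-term computation above goes through verbatim with $\alpha_i$ replaced by $\alpha_i+(k-1)\nu_i$, yielding $i(\Po,k)=\sum_i\epsilon_i\sum_l w_{i,l}\bigl(\alpha_i+(k-1)\nu_i\bigr)^l$ with the \emph{same} weights~$w_{i,l}$. Writing $\alpha_i+(k-1)\nu_i=(\alpha_i-\nu_i)+k\nu_i$, expanding by the binomial theorem, and exchanging the order of the sums over~$l$ and the new index~$m$ produces the closed form asserted in~(c), the coefficient of~$k^m$ being $\sum_{i\in I}\epsilon_i\inner{\velambda,\ve v_i}^m\sum_{l=m}^{s_i}\binom{l}{m}w_{i,l}\inner{\velambda,\ve a_i-\ve v_i}^{l-m}$. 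A priori $m$ ranges up to~$s$, but since $\Po$ is integral, $i(\Po,k)=\#(k\Po\cap\Z^n)$ is a polynomial in~$k$ of degree~$\dim\Po$ (the Ehrhart theorem, cf.\ \eqref{lem:ratehr}), so every coefficient with $m>\dim\Po$ vanishes; hence the sum truncates at $M=\min\{s,\dim\Po\}$, completing the proof.
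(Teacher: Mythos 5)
Your proposal is correct and follows essentially the same route as the paper. You construct $\velambda$ via the moment curve exactly as in the paper's proof, substitute $\ve z=\exp(\tau\velambda)$, extract the constant term of each summand's Laurent expansion at $\tau=0$, and obtain the same weights $w_{i,l}$: your $[\tau^{s_i-l}]\prod_j f(\tau\beta_{ij})$ with $f(x)=x/(e^x-1)$ coincides with the paper's $\td_{s_i-l}(-\beta_{i1},\dots,-\beta_{is_i})$ because $h(-x)=f(x)$ for $h(x)=x/(1-e^{-x})$. The decisive algorithmic point — multiplying the $s_i$ truncated one-variable series one factor at a time modulo $\tau^{s_i+1}$, rather than expanding the $s_i$-variate Todd polynomial, which has exponentially many monomials — is identified correctly, and your part~(c) derivation (replace $\alpha_i$ by $\alpha_i+(k-1)\nu_i$, binomial-expand, truncate at $\min\{s,\dim\Po\}$ by the Ehrhart theorem) is the paper's. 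The only cosmetic difference is the bit-size bookkeeping: the paper proves a self-contained integer recursion for the Taylor coefficients of~$h$ (its Lemma~\ref{lemma:toddy-expansion}), whereas you invoke von Staudt--Clausen for denominators and an asymptotic numerator bound for the Bernoulli numbers; both give the needed polynomial encoding length, though your stated numerator bound ``$\leq s!$'' would deserve a citation or a one-line argument via $|B_{2n}|=2(2n)!\,\zeta(2n)/(2\pi)^{2n}$.
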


\begin{proof} [Proof of Theorem \ref{volume}] 
Corollary \ref{th:matroid-multiratgenfun-summary} and
Theorem \ref{th:specialization-polytime-varydim} imply Theorem
(\autoref{volume}) directly. 
\end{proof}

The remainder of this section contains the proof of
\autoref{th:specialization-polytime-varydim}.  We follow \cite{barvinok:99} and
recall the definition of Todd polynomials. We will prove that they can be
efficiently evaluated in rational arithmetic.
\begin{definition}\label{def:todd-poly}
  We consider the function
  \begin{displaymath}
    H(x, \xi_1,\dots,\xi_s) = \prod^s_{i=1} \frac{x\xi_i}{1-\exp\{-x\xi_i\}},
  \end{displaymath}
  a function that is analytic in a neighborhood of~$\ve 0$.
  The $m$-th ($s$-variate) \emph{Todd polynomial} is the coefficient of~$x^m$
  in the Taylor expansion 
  \begin{displaymath}
    H(x, \xi_1,\dots,\xi_s) = \sum_{m=0}^\infty \td_m(\xi_1,\dots,\xi_s) x^m.
  \end{displaymath}
\end{definition}
We remark that, when the numbers $s$ and $m$ are allowed to vary, the Todd
polynomials have an exponential number of monomials.
\begin{theorem}\label{th:todd-evaluation}
  The Todd polynomial $\td_m(\xi_1,\dots,\xi_s)$ can be evaluated for given
  rational data $\xi_1,\dots,\xi_s$ in time polynomial in~$s$, $m$, and the
  encoding length of $\xi_1,\dots,\xi_s$.
\end{theorem}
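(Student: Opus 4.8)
The plan is to avoid ever writing down the Todd polynomial symbolically---which is impossible in polynomial time, since, as remarked, it has an exponential number of monomials---and instead to evaluate it by truncated power series arithmetic in the single variable~$x$, carrying the numerical (rational) values $\xi_1,\dots,\xi_s$ along. Recall the Bernoulli generating function $\frac{t}{e^t-1}=\sum_{k\ge0}\frac{B_k}{k!}t^k$, which after $t\mapsto -t$ gives $\frac{t}{1-e^{-t}}=\sum_{k\ge0}\frac{(-1)^kB_k}{k!}t^k$. Hence each factor of $H$ expands as $\frac{x\xi_i}{1-\exp\{-x\xi_i\}}=\sum_{k\ge0}\frac{(-1)^kB_k}{k!}\xi_i^k x^k$, and, since terms of degree exceeding~$m$ do not influence the coefficient of~$x^m$ in a product, one has
\[
  \td_m(\xi_1,\dots,\xi_s)=[x^m]\prod_{i=1}^s f_i(x),\qquad
  f_i(x):=\sum_{k=0}^m \frac{(-1)^kB_k}{k!}\,\xi_i^k\,x^k .
\]
The algorithm is therefore: (i) compute $B_0,\dots,B_m$ by the recurrence $B_k=-\frac1{k+1}\sum_{j=0}^{k-1}\binom{k+1}{j}B_j$; (ii) form the coefficient vectors of $f_1,\dots,f_s$; (iii) multiply them one at a time, each time reducing modulo~$x^{m+1}$; (iv) output the coefficient of~$x^m$ of the resulting polynomial.

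The operation count is immediate: step~(i) uses $O(m^2)$ rational operations, and step~(iii) is $s-1$ multiplications of polynomials of degree at most~$m$, i.e.\ $O(sm^2)$ rational operations. The substantive point is that every rational number produced along the way has bit-size bounded polynomially in $s$, $m$, and the encoding lengths of the~$\xi_i$. For the Bernoulli numbers this is classical: the recurrence, together with the identity $|B_{2n}|=2(2n)!\,\zeta(2n)/(2\pi)^{2n}$ and the von Staudt--Clausen theorem for the denominators, shows that $B_k$ has bit-size $O(k\log k)$ and, in particular, $|B_k|/k!\le1$ for all~$k\ge0$. For the coefficients of the truncated product, the coefficient of~$x^d$ in $\prod_{i=1}^s f_i$ equals $\sum_{k_1+\dots+k_s=d}\prod_{i=1}^s\frac{(-1)^{k_i}B_{k_i}}{k_i!}\xi_i^{k_i}$; its denominator divides $\prod_{i=1}^s\bigl(d!\,\mathrm{denom}(B_d)\,\mathrm{denom}(\xi_i)^d\bigr)$, while, with $\Xi:=\max_i\max(1,|\xi_i|)$, its absolute value is at most $\binom{d+s-1}{s-1}\Xi^d$. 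Since $d\le m$ and $\log\binom{m+s-1}{s-1}\le\min(m,s)\log(m+s)$, both numerator and denominator have polynomially bounded bit-size, and hence so do all intermediate quantities in steps (i)--(iv).

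The main obstacle is exactly this size control: a symbolic implementation in the~$\xi_i$ would blow up for the reason noted above, and even working numerically one must rule out super-polynomial growth of the denominators across the $s$ successive truncated multiplications---which is what the common-denominator bound $\prod_i\bigl(d!\,\mathrm{denom}(B_d)\,\mathrm{denom}(\xi_i)^d\bigr)$ is designed to do. Granting this bookkeeping, the claimed running time follows and Theorem~\ref{th:todd-evaluation} is proved; this evaluation routine is then the computational engine behind the specialization procedure of Theorem~\ref{th:specialization-polytime-varydim}.
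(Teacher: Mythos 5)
Your proposal is correct and follows essentially the same route as the paper: both observe that the naive composition formula \eqref{eq:evaluation-by-compositions} is exponential in~$s$, and both instead evaluate the $s$ truncated Taylor series of the factors $h(x\xi_j)$ and multiply them one at a time modulo~$x^{m+1}$, verifying along the way that all intermediate coefficients have polynomially bounded encoding length. The only cosmetic difference is that you invoke the classical Bernoulli-number framework (with von Staudt--Clausen for denominators) whereas the paper derives an equivalent custom recursion for the integers $c_n = (-1)^n (n+1)!\,B_n$; one small slip in your denominator bound is that $\operatorname{denom}(B_d)$ should be replaced by $\operatorname{lcm}_{k\le d}\operatorname{denom}(B_k)$, since all lower-index Bernoulli numbers appear in the composition sum (e.g.\ $B_3=0$ has trivial denominator but $B_2=1/6$ contributes when $d=3$), though this does not affect the polynomial-size conclusion.
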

The proof makes use of the following lemma.
\begin{lemma}\label{lemma:toddy-expansion}
  The function $h(x) = x/(1-\mathrm{e}^{-x})$ is a function that is analytic in a
  neighborhood of~$0$.  Its Taylor series about $x=0$ is of the form 
  \begin{equation}
    \label{eq:toddy-coefficients-format}
    h(x) = \sum_{n=0}^\infty b_n x^n \quad\text{where}\quad
    b_n = \frac1{n!\, (n+1)!} c_n
  \end{equation}
  with integer coefficients $c_n$ that have a binary encoding length of
  $\mathrm O(n^2 \log n)$.
  The coefficients $c_n$ can be computed from the recursion
  \begin{equation}
    \label{eq:recursion-toddy}
    \begin{aligned}
      c_0 &= 1\\
      c_n &= \sum_{j=1}^n (-1)^{j+1} \binom{n+1}{j+1} \frac{n!}{(n-j+1)!} c_{n-j}
      && \text{for $n=1,2,\dots$.}
    \end{aligned}
  \end{equation}
\end{lemma}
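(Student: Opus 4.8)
The plan is to extract a linear recurrence for the Taylor coefficients $b_n$ of $h$ from the functional equation $h(x)\cdot\frac{1-\mathrm e^{-x}}{x}=1$, rescale it to the integer sequence claimed in~\eqref{eq:toddy-coefficients-format}, and then bound the growth of that sequence. First I would observe that $\frac{1-\mathrm e^{-x}}{x}=\sum_{m=0}^\infty\frac{(-1)^m}{(m+1)!}x^m$ is entire with constant term $1$, hence nonvanishing and invertible as a power series near $0$; so $h(x)=x/(1-\mathrm e^{-x})$ is analytic in a neighbourhood of $0$ with $h(0)=1$. Writing $h(x)=\sum_{n\ge0}b_n x^n$ and comparing the coefficient of $x^n$ on both sides of $h(x)\cdot\frac{1-\mathrm e^{-x}}{x}=1$ yields $b_0=1$ and, for $n\ge1$,
\[
  b_n=\sum_{j=1}^{n}(-1)^{j+1}\frac{b_{n-j}}{(j+1)!}.
\]
Substituting the Ansatz $b_n=c_n/(n!\,(n+1)!)$ and clearing denominators gives
\[
  c_n=\sum_{j=1}^{n}(-1)^{j+1}\frac{n!\,(n+1)!}{(j+1)!\,(n-j)!\,(n-j+1)!}\,c_{n-j},
\]
and the elementary factorisation $\frac{n!\,(n+1)!}{(j+1)!\,(n-j)!\,(n-j+1)!}=\binom{n+1}{j+1}\cdot\frac{n!}{(n-j+1)!}$ identifies this with the recursion~\eqref{eq:recursion-toddy}; since $c_0=b_0\cdot0!\cdot1!=1$, this simultaneously justifies the normal form~\eqref{eq:toddy-coefficients-format}, fixes the base case, and determines the $c_n$.

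Next I would establish integrality of $c_n$ by induction on $n$: the coefficient $\binom{n+1}{j+1}$ is an integer, and for $1\le j\le n$ the quantity $\frac{n!}{(n-j+1)!}=n(n-1)\cdots(n-j+2)$ is a falling factorial (an empty product equal to $1$ when $j=1$), hence an integer; so every term of the recursion is an integer and $c_n\in\mathbb Z$. For the encoding length I would bound $|c_n|$ directly from the recursion, using $\binom{n+1}{j+1}\le 2^{n+1}$, $\frac{n!}{(n-j+1)!}\le n!$, and the fact that there are at most $n$ summands, to get $|c_n|\le n\,2^{n+1}\,n!\cdot\max_{0\le k<n}|c_k|$; iterating and applying Stirling's estimate $\log_2(k!)=O(k\log k)$ gives $\log_2|c_n|\le\sum_{k=1}^{n}\bigl(\log_2 k+(k+1)+\log_2(k!)\bigr)=O(n^2\log n)$. (One could sharpen this: the singularities of $h$ nearest the origin are at $\pm2\pi\mathrm i$, so Cauchy's inequalities give $|b_n|=O(\rho^{-n})$ for every $\rho<2\pi$ and hence $|c_n|\le n!\,(n+1)!$ for large~$n$, i.e.\ $\log_2|c_n|=O(n\log n)$; but the stated bound $O(n^2\log n)$ is all that is needed.)

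The only step I expect to require real care is this growth estimate — tracking how the factorial factors compound through the recursion so as to land at the claimed $O(n^2\log n)$ without a circular use of the bound being proved. The derivation of the $b_n$-recurrence is a routine Cauchy-product computation, the passage to~\eqref{eq:recursion-toddy} is the single binomial identity displayed above, and integrality is an immediate induction; so apart from the size bound the lemma is essentially bookkeeping.
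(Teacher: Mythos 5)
Your proof follows essentially the same route as the paper's: extract the $b_n$-recurrence from $h(x)\cdot(1-\mathrm e^{-x})/x=1$, substitute $b_n = c_n/(n!\,(n+1)!)$ and simplify via the same binomial identity, deduce integrality by induction, and bound $|c_n|$ by a crude term-by-term estimate that compounds factorially to give $\mathrm O(n^2\log n)$ bits. The only cosmetic difference is that you carry a running $\max_{k<n}|c_k|$ where the paper writes the slightly sloppier $|c_n|\leq ((n+1)!)^2|c_{n-1}|$, but the resulting bound is the same.
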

\begin{proof}
  The reciprocal function $h^{-1}(x) = (1-\mathrm{e}^{-x})/x$ has the Taylor
  series
  \begin{displaymath}
    h^{-1}(x) = \sum_{i=0}^\infty a_n x^n
    \quad\text{with}\quad
    a_n = \frac{(-1)^n}{(n+1)!}.
  \end{displaymath}
  Using the identity $h^{-1}(x)h(x) = \bigl( \sum_{n=0}^\infty a_n x^n \bigr)
  \bigl( \sum_{n=0}^\infty b_n x^n \bigr) = 1$, we obtain the recursion
  \begin{equation}
    \begin{aligned}
      b_0 &= \tfrac1{a_0} = 1\\
      b_n &= -(a_1 b_{n-1} + a_2 b_{n-2} + \dots + a_n b_0)
      && \text{for $n=1,2,\dots$.}
    \end{aligned}
  \end{equation}
  We prove~\eqref{eq:toddy-coefficients-format} inductively.  Clearly $b_0 =
  c_0 = 1$.  For $n=1,2,\dots$, we have
  \begin{align*}
    c_n &= n! \, (n + 1)! \, b_n \\
    &= -n! \, (n + 1)! \, (a_1 b_{n-1} + a_2 b_{n-2} + \dots + a_n b_0) \\
    &= n! \, (n + 1)! \, \sum_{j=1}^n \frac{(-1)^{j+1}}{(j+1)!} \cdot
    \frac{1}{(n-j)!\, (n-j+1)!} c_{n-j} \\
    &= \sum_{j=1}^n (-1)^{j+1} \frac{(n+1)!}{(j+1)!\, (n-j)!} \cdot 
    \frac{n!}{(n-j+1)!} c_{n-j}.
  \end{align*}
  Thus we obtain the recursion formula~\eqref{eq:recursion-toddy}, which also
  shows that all $c_n$ are integers.  A rough estimate shows that 
  \begin{displaymath}
    \abs{c_n} \leq n (n+1)!\, n!\, \abs{c_{n-1}} \leq \paren[big]{(n+1)!}^2 \abs{c_{n-1}},
  \end{displaymath}
  thus $\abs{c_n}\leq \paren[big]{(n+1)!}^{2n}$, so $c_n$ has a binary
  encoding length of $\mathrm O(n^2 \log n)$. 
\end{proof}

\begin{proof}[Proof of \autoref{th:todd-evaluation}]
  By definition, we have
  \begin{displaymath}
    H(x,\xi_1,\dots,\xi_s) = \sum_{m=0}^\infty \td_m(\xi_1,\dots,\xi_s) x^m 
    = \prod_{j=1}^s h(x\xi_j).
  \end{displaymath}
  From \autoref{lemma:toddy-expansion} we have
  \begin{equation}
    \label{eq:toddy-expansion-trunc}
    h(x\xi_j) = \sum_{n=0}^m \beta_{j,n} x^n  + o(x^m)
    \quad\text{where}\quad
    \beta_{j,n} = \frac{\xi_j^n}{n!\,(n+1)!} c_n 
  \end{equation}
  with integers~$c_n$ given by the recursion~\eqref{eq:recursion-toddy}.
  Thus we can evaluate $\td_m(\xi_1,\dots,\xi_s)$  by summing over all the
  possible compositions $n_1 + \dots + n_{s} = m$ of the order~$m$
  from the orders~$n_j$ of the factors:
  \begin{equation}\label{eq:evaluation-by-compositions}
    \td_m(\xi_1,\dots,\xi_s)
    = \sum_{\substack{(n_1,\dots,n_{s})\in\Z_+^{s}\\
        n_1 + \dots + n_{s} = m}} \!\!\!\! \beta_{1,n_1} \dots
    \beta_{s,n_s}
  \end{equation}
  We remark that the length of the above sum is equal to the number of compositions 
  of~$m$ into $s$~non-negative parts, 
  \begin{align*}
    C'_s(m) 
    &= \binom{m + s - 1}{s - 1}\\
    &= \frac{(m+s-1)(m+s-2)\dots(m+s-(s-1))} {(s-1) (s-2) \dots 2\cdot 1}\\
    &= \Omega\paren{\paren[big]{1 + \tfrac{m}{s-1}}^{s}},
  \end{align*}
  which is \emph{exponential} in~$s$ (whenever $m\geq s$).  Thus we cannot evaluate the
  formula~\eqref{eq:evaluation-by-compositions}
  efficiently when $s$ is allowed to grow.  

  However, we show that we can evaluate $\td_m(\xi_1,\dots,\xi_s)$ more efficiently. 
  To this end, we multiply up the $s$ truncated Taylor
  series~\eqref{eq:toddy-expansion-trunc}, one
  factor at a time, truncating after order~$m$.  Let us denote
  \begin{align*}
    H_1(x) &= h(x\xi_1) \\
    H_2(x) &= H_1(x) \cdot h(x\xi_2) \\
    &\vdots \\
    H_s(x) &= H_{s-1}(x) \cdot h(x\xi_s) = H(x,\xi_1,\dots,\xi_s).
  \end{align*}
  Each multiplication
  can be implemented in $\mathrm O(m^2)$ elementary rational operations.  
  We finally show that all numbers appearing in the calculations have polynomial
  encoding size.  Let $\Xi$ be the largest binary encoding size of any of the rational
  numbers~$\xi_1,\dots,\xi_s$.  Then every $\beta_{j,n}$ given
  by~\eqref{eq:toddy-expansion-trunc} has a binary encoding 
  size $\mathrm O(\Xi n^5 \log^3 n)$.  Let $H_j(x)$ have the truncated Taylor
  series $\sum_{n=0}^m \alpha_{j,n} x^n + o(x^m)$ and let $A_j$ denote the
  largest binary encoding length of any $\alpha_{j,n}$ for $n\leq m$.  Then 
  \begin{displaymath}
    H_{j+1}(x) = \sum_{n=0}^m \alpha_{j+1,n} x^n + o(x^m)
    \quad\text{with}\quad
    \alpha_{j+1,n} =  \sum_{l=0}^n \alpha_{j,l} \beta_{j, n-l} .
  \end{displaymath}
  Thus the binary encoding size of $\alpha_{j+1,n}$ (for $n\leq m$) is bounded
  by $A_j + \mathrm O(\Xi m^5 \log^3 m)$. 
  Thus, after $s$ multiplication steps, the encoding size of the coefficients
  is bounded by $\mathrm O(s \Xi m^5 \log^3 m)$, a polynomial quantity.
\end{proof}

\begin{proof}[Proof of \autoref{th:specialization-polytime-varydim}]
\emph{Parts (a) and (b).}
  We recall the technique of \citet[Lemma~4.3]{bar}, refined by
  \citet[section~5]{barvinok-2006-ehrhart-quasipolynomial}.
  
  We first construct a rational vector $\velambda\in\Z^n$ such that
  $\inner{\velambda, \ve b_{ij}}\neq0$ for all $i,j$.  One such construction
  is to consider the \emph{moment curve} $\velambda(\xi) = (1, \xi, \xi^2,
  \dots, \xi^{n-1})\in\R^n$.  For each exponent vector $\ve b_{ij}$ occuring
  in a denominator of~\eqref{eq:generating-function-0}, the function
  $f_{ij}\colon \xi\mapsto \inner{\velambda(\xi), \ve b_{ij}}$ is a
  polynomial function of degree at most $n-1$.  Since $\ve b_{ij}\neq\ve 0$,
  the function $f_{ij}$ is not identically zero.  Hence $f_{ij}$ has at most
  $n-1$ zeros.  By evaluating all functions $f_{ij}$ for $i\in I$ and
  $j=1,\dots,s_i$ at $M = (n-1)s|I|+1$ different values for~$\xi$, for
  instance at the integers $\xi=0,\dots, M$, we can find one~$\xi=\bar\xi$
  that is not a zero of any~$f_{ij}$.  Clearly this search can be implemented
  in polynomial time, even when the dimension~$n$ and the number~$s$ of terms
  in the denominators are not fixed.  We set $\velambda =
  \velambda(\bar\xi)$.
  
  For $\tau>0$, let us consider the points $\ve z_\tau = \ve e^{\tau
    \velambda} = (\exp\{ \tau\lambda_1 \}, \dots, \exp\{ \tau\lambda_n
  \})$. We have 
  \begin{displaymath}
    \ve z_\tau^{\ve b_{ij}} 
    = \prod_{l=1}^n \exp\{ \tau\lambda_l b_{ijl} \} = \exp\{ \tau
    \inner{\velambda, \ve b_{ij}} \};
  \end{displaymath}
  since $\inner{\velambda, \ve b_{ij}}\neq0$ for all $i,j$, all the
  denominators $1-\ve z_\tau^{\ve b_{ij}}$ are nonzero.  
  Hence for every
  $\tau>0$, the point $\ve z_\tau$ is a regular point not only of $g(\ve z)$
  but also of the individual summands of~\eqref{eq:generating-function-0}. 
  We have
  \begin{align*}
    g(\ve 1) 
    &= \lim_{\tau\to0^+} \sum_{i\in I} \epsilon_i \frac{\ve
      z_\tau^{\ve a_i}}{\prod_{j=1}^{s_i} (1-\ve z_\tau^{\ve b_{ij}})} 
\displaybreak[0]\\
    &= \lim_{\tau\to0^+} \sum_{i\in I} \epsilon_i 
    \frac{\exp\{ \tau \inner{\velambda, \ve a_{i}} \}}
    {\prod_{j=1}^{s_i} (1-\exp\{ \tau \inner{\velambda, \ve b_{ij}} \} ) } 
\displaybreak[0]\\
    &= \lim_{\tau\to0^+} \sum_{i\in I} \epsilon_i \,
    \tau^{-s_i}
    \exp\{ \tau \inner{\velambda, \ve a_{i}} \}
    \prod_{j=1}^{s_i}
    \frac{\tau} 
    {1-\exp\{ \tau \inner{\velambda, \ve b_{ij}} \} } 
\displaybreak[0]\\
    &= \lim_{\tau\to0^+} \sum_{i\in I} \epsilon_i \,
    \tau^{-s_i}
    \exp\{ \tau \inner{\velambda, \ve a_{i}} \}
    \prod_{j=1}^{s_i}
    \frac{-1}{\inner{\velambda, \ve b_{ij}}} h(-\tau\inner{\velambda, \ve
      b_{ij}}) 
\displaybreak[0]\\
    &= \lim_{\tau\to0^+} \sum_{i\in I} \epsilon_i\,  \frac{(-1)^{s_i}}{\prod_{j=1}^{s_i}
      \inner{\velambda, \ve b_{ij}}}  
    \,    \tau^{-s_i}
    \exp\{ \tau \inner{\velambda, \ve a_{i}} \}
    H(\tau, -\inner{\velambda, \ve b_{i1}}, \dots, -\inner{\velambda, \ve b_{is_i}})
  \end{align*}
  where $H(x, \xi_1,\dots,\xi_{s_i})$ is the function
  from \autoref{def:todd-poly}. 
  We will compute the limit 
  by computing the constant term of the Laurent expansion of each summand
  about~$\tau=0$. 
  Now the function $\tau\mapsto \exp\{ \tau \inner{\velambda, \ve a_{i}} \}$
  is holomorphic and has the Taylor series
  \begin{equation}
    \exp\{ \tau \inner{\velambda, \ve a_{i}} \}
    = \sum_{l = 0}^{s_i} \alpha_{i,l} \tau^l + o(\tau^{s_i})
    \quad\text{where}\quad
    \alpha_{i,l} = \frac{\inner{\velambda, \ve a_{i}}^l}{l!}, 
    \label{eq:numerator-series}
  \end{equation}
  and $H(\tau, \xi_1,\dots,\xi_{s_i})$ has the Taylor series
  \begin{displaymath}
    H(\tau, \xi_1,\dots,\xi_s) = \sum_{m=0}^{s_i} \td_m(\xi_1,\dots,\xi_s)
    \tau^m + o(\tau^{s_i}). 
  \end{displaymath}
  Because of
  the factor $\tau^{-s_i}$, which gives rise to a pole of order~$s_i$ in the
  summand, we can compute the constant term of the
  Laurent expansion by summing over all the possible compositions $s_i = l +
  (s_i - l)$ of the order~$s_i$:
  \begin{equation}
    g(\ve 1) 
    = \sum_{i\in I} \epsilon_i 
    \frac{(-1)^{s_i}}{\prod_{j=1}^{s_i} \inner{\velambda, \ve b_{ij}}}
    \sum_{l=0}^{s_i} \frac{\inner{\velambda, \ve a_i}^l}{l!} 
    \td_{s_i-l}(-\inner{\velambda, \ve b_{i1}}, \dots, -\inner{\velambda, \ve b_{is_i}}).
  \end{equation}
  We use the notation
  \begin{displaymath}
    w_{i,l}  =  (-1)^{s_i} \frac{\td_{s_i-l}( -\langle \velambda,
      \ve b_{i,1} \rangle, \dots, -\langle \velambda, \ve b_{i,s_i}\rangle)}
    {l!\cdot \langle \velambda,
      \ve b_{i,1} \rangle \cdots \langle \velambda, \ve b_{i,s_i}\rangle}
    \quad\text{for $i\in I$ and $l=0,\dots,s_i$};
  \end{displaymath}
  these rational numbers can be computed in polynomial time using
  \autoref{th:todd-evaluation}.  
  We now obtain the formula of the claim, 
  \begin{displaymath}
    g(\ve 1) 
    = \sum_{i\in I} \epsilon_i \sum_{l=0}^{s_i} w_{i,l} \inner{\velambda, \ve a_i}^l.
  \end{displaymath}
  \smallbreak

\noindent\emph{Part~(c).}
Applying the same technique to the parametric rational function~\eqref{eq:generating-function-dil}, we obtain
\begin{align*}
  \#(k\Po\cap\Z^n) &= g_{k\Po}(\ve1)\\
  &= \sum_{i\in I} \epsilon_i \sum_{l=0}^{s_i} w_{i,l} \inner{\velambda, \ve a_i+(k-1)\ve v_i}^l, 
\displaybreak[0]\\
  &= \sum_{i\in I} \epsilon_i \sum_{l=0}^{s_i} w_{i,l} \sum_{m=0}^l
  \binom{l}{m} \inner{\velambda,\ve a_i - \ve v_i}^{l-m} k^m \inner{\velambda,\ve
    v_i}^m 
\displaybreak[0]\\
  &= \sum_{m=0}^s \paren{
    \sum_{i\in I} 
    \epsilon_i 
    \inner{\velambda,\ve v_i}^m 
    \sum_{l=m}^{s_i} \binom{l}{m} w_{i,l} 
    \inner{\velambda,\ve a_i - \ve v_i}^{l-m} 
  } k^m,
\end{align*}
an explicit formula for the Ehrhart polynomial.  We remark
that, since the Ehrhart polynomial is of degree equal to the
dimension of~$\Po$, all coefficients of $k^m$ for $m > \dim\Po$ must vanish.
Thus we obtain the formula of the claim, where we sum only up to $\min\{s,
\dim\Po\}$ instead of~$s$. 
\end{proof}


\section{Algebraic Properties of $h^*$-vectors and Ehrhart polynomials of Matroid Polytopes} \label{sec:hstar}

Using the programs {\tt cdd+} \cite{cdd}, {\tt LattE} \cite{lattesoft} and {\tt LattE macchiato} \cite{latte-macchiato} we explored patterns for the
Ehrhart polynomials of matroid polytopes. Since previous authors
proposed other invariants of a matroid (e.g., the Tutte polynomials
and the invariants of \cite{SpeyerA-matroid-invar, billera-2006}) we wished to know how well does the Ehrhart
polynomial distinguish non-isomorphic matroids. It is natural to
compare it with other known invariants. Some straightforward
properties are immediately evident.  For example, the Ehrhart
polynomial of a matroid and that of its dual are equal. Also the
Ehrhart polynomial of a direct sum of matroids is the product of their
Ehrhart polynomials.

\let\maybemidrule=\relax
\begin{table}[htbp]
\caption{Coefficients of the Ehrhart polynomials of selected matroids in \cite{billera-2006, SpeyerA-matroid-invar}}
 \label{tab:belspeyer}
\begin{centering}
\def\arraystretch{1.2}
\begin{tabular}{ll}
\toprule
& Ehrhart Polynomial \\
\midrule
Speyer1 & $1   , \frac{21}{5}   , \frac{343}{45}   , \frac{63}{8}   , \frac{91}{18}   , \frac{77}{40}   , \frac{29}{90}  $ \\
\maybemidrule
Speyer2 & $1   , \frac{135}{28}   , \frac{3691}{360}   , \frac{1511}{120}   , \frac{88}{9}   , \frac{39}{8}   , \frac{529}{360}   , \frac{89}{420}  $\\
\maybemidrule
BJR1 & $1  , \frac{109}{30}   , \frac{23}{4}   , \frac{59}{12}   , \frac{9}{4}   , \frac{9}{20}   $\\
\maybemidrule
BJR2 & $ 1  , \frac{211}{60}   , \frac{125}{24}   , \frac{33}{8}   , \frac{43}{24}   , \frac{43}{120}   $\\
\maybemidrule
BJR3 &  $1   , \frac{83}{20}   , \frac{2783}{360}   , \frac{199}{24}   , \frac{391}{72}   , \frac{247}{120}   , \frac{61}{180}  $\\
\maybemidrule
BJR4 & $1   , \frac{25}{6}   , \frac{353}{45}   , \frac{101}{12}   , \frac{193}{36}   , \frac{23}{12}   , \frac{53}{180}  $\\
\bottomrule
\end{tabular}
\end{centering}
\end{table}

We call the last two matroids in Figure $2$ in \cite{SpeyerA-matroid-invar} {\it Speyer1} and {\it Speyer2} and the  matroids of Figure $2$ in \cite{billera-2006} {\it BJR1}, {\it BJR2}, {\it BJR3}, and {\it BJR4}, and list their Ehrhart polynomials in Table \ref{tab:belspeyer}.
We note that {\it BJR3} and {\it BJR4} 
have the same Tutte polynomial, yet their Ehrhart polynomials are
different. This proves that the Ehrhart polynomial cannot be computed
using deletion and contractions, like is the case for the Tutte
polynomial.  Examples {\it BJR1} and {\it BJR2} show that the Ehrhart
polynomial of a matroid may be helpful on distinguishing
non-isomorphic matroids: These two matroids are not isomorphic yet
they have the same Tutte polynomials and the same quasi-symmetric
function studied in \cite{billera-2006}. Although they share some
properties, there does not seem to be an obvious relation to Speyer's
univariate polynomials introduced in \cite{SpeyerA-matroid-invar};
examples {\it Speyer1} and {\it Speyer2} show they are relatively
prime with their corresponding Ehrhart polynomials.

Our experiments included, among others, many examples coming from
small graphical matroids, random realizable matroids over fields of
small positive characteristic, and the classical examples listed in
the Appendix of \cite{Oxley1992Matroid-Theory} for which we list the
results in Table \ref{tab:hvec}. For a comprehensive list of all our
calculations visit \cite{HawsMatroid-Polytop}.  Soon it became evident
that all instances verified both parts of our \autoref{unimodalconj}.

\begin{table}[htbp]
\caption{Coefficients of the Ehrhart polynomials and $h^*$-vectors of selected
  matroids in \cite{Oxley1992Matroid-Theory}} 
 \label{tab:hvec}
\begin{centering}
\tiny
\def\arraystretch{1.6}
\begin{tabular}{lccll}
\toprule
& \normalsize$n$ & \normalsize$r$ & \normalsize$h^*$-vector & \normalsize Ehrhart Polynomial \\
\midrule
 $K_4$ & $6$ & $3$ & $1, 10, 20, 10, 1$                            & $  1   , \frac{107}{30}   , \frac{21}{4}   , \frac{49}{12}   , \frac{7}{4}   , \frac{7}{20}   $ \\
 \maybemidrule                                                             
 $W^3$ & $6$ & $3$ & $1, 11, 24, 11, 10$                            & $  1   , \frac{18}{5}   , \frac{11}{2}   , \frac{9}{2}   , 2   , \frac{2}{5}   $ \\
 \maybemidrule                                                             
 $Q_6$ & $6$ & $3$ & $1, 12, 28, 12, 1$                            & $  1   , \frac{109}{30}   , \frac{23}{4}   , \frac{59}{12}   , \frac{9}{4}   , \frac{9}{20}   $ \\
 \maybemidrule                                                             
 $P_6$ & $6$ & $3$ & $1, 13, 32, 13, 1$                            & $  1   , \frac{11}{3}   , 6   , \frac{16}{3}   , \frac{5}{2}   , \frac{1}{2}   $ \\
 \maybemidrule                                                             
 $R_6$ & $6$ & $3$ & $1, 12, 28, 12, 1$                            & $  1   , \frac{109}{30}   , \frac{23}{4}   , \frac{59}{12}   , \frac{9}{4}   , \frac{9}{20}   $ \\
 \maybemidrule                                                             
 $F_7$ & $7$ & $3$ & $21, 98, 91, 21, 1$                           & $  1   , \frac{21}{5}   , \frac{343}{45}   , \frac{63}{8}   , \frac{91}{18}   , \frac{77}{40}   , \frac{29}{90}   $ \\
 \maybemidrule                                                             
 $F_7^-$ & $7$ & $3$ & $21, 101, 97, 22, 1$                        & $  1   , \frac{253}{60}   , \frac{2809}{360}   , \frac{33}{4}   , \frac{193}{36}   , \frac{61}{30}   , \frac{121}{360}   $ \\
 \maybemidrule                                                             
 $P^7$ & $7$ & $3$ & $21, 104, 103, 23, 1$                         & $  1   , \frac{127}{30}   , \frac{479}{60}   , \frac{69}{8}   , \frac{17}{3}   , \frac{257}{120}   , \frac{7}{20}   $ \\
 \maybemidrule                                                             
 $AG(3,2)$ & $8$ & $4$ & $1, 62, 561, 1014, 449, 48, 1$            & $  1   , \frac{209}{42}   , \frac{1981}{180}   , \frac{881}{60}   , \frac{119}{9}   , \frac{95}{12}   , \frac{499}{180}   , \frac{89}{210}   $ \\
 \maybemidrule                                                             
 $AG'(3,2)$ & $8$ & $4$ & $1, 62, 562, 1023, 458, 49, 1$           & $  1   , \frac{299}{60}   , \frac{4007}{360}   , \frac{5401}{360}   , \frac{122}{9}   , \frac{2911}{360}   , \frac{1013}{360}   , \frac{77}{180}   $ \\
 \maybemidrule                                                             
 $R_8$ & $8$ & $4$ & $1, 62, 563, 1032, 467, 50, 1$                & $  1   , \frac{524}{105}   , \frac{1013}{90}   , \frac{1379}{90}   , \frac{125}{9}   , \frac{743}{90}   , \frac{257}{90}   , \frac{136}{315}   $ \\
 \maybemidrule                                                             
 $F_8$ & $8$ & $4$ & $1, 62, 563, 1032, 467, 50, 1$                & $  1   , \frac{524}{105}   , \frac{1013}{90}   , \frac{1379}{90}   , \frac{125}{9}   , \frac{743}{90}   , \frac{257}{90}   , \frac{136}{315}   $ \\
 \maybemidrule                                                             
 $Q_8$ & $8$ & $4$ & $1, 62, 564, 1041, 476, 51, 1$                & $  1   , \frac{2099}{420}   , \frac{4097}{360}   , \frac{1877}{120}   , \frac{128}{9}   , \frac{337}{40}   , \frac{1043}{360}   , \frac{61}{140}   $ \\
 \maybemidrule                                                             
 $S_8$ & $8$ & $4$ & $1, 44, 337, 612, 305, 40, 1$                 & $  1   , \frac{1021}{210}   , \frac{377}{36}   , \frac{475}{36}   , \frac{193}{18}   , \frac{511}{90}   , \frac{65}{36}   , \frac{67}{252}   $ \\
 \maybemidrule                                                             
 $V_8$ & $8$ & $4$ & $1, 62, 570, 1095, 530, 57, 1$                & $  1   , \frac{2117}{420}   , \frac{4367}{360}   , \frac{2107}{120}   , \frac{146}{9}   , \frac{1133}{120}   , \frac{1133}{360}   , \frac{193}{420}   $ \\
 \maybemidrule                                                             
 $T_8$ & $8$ & $4$ & $1, 62, 564, 1041, 476, 51, 1$                & $  1   , \frac{2099}{420}   , \frac{4097}{360}   , \frac{1877}{120}   , \frac{128}{9}   , \frac{337}{40}   , \frac{1043}{360}   , \frac{61}{140}   $ \\
 \maybemidrule                                                             
 $V_8^+$ & $8$ & $4$ & $1, 62, 569, 1086, 521, 56, 1$              & $  1   , \frac{151}{30}   , \frac{2161}{180}   , \frac{3103}{180}   , \frac{143}{9}   , \frac{1669}{180}   , \frac{559}{180}   , \frac{41}{90}   $ \\
 \maybemidrule                                                             
 $L_8$ & $8$ & $4$ & $1, 62, 567, 1068, 503, 54, 1$                & $  1   , \frac{527}{105}   , \frac{529}{45}   , \frac{83}{5}   , \frac{137}{9}   , \frac{134}{15}   , \frac{136}{45}   , \frac{47}{105}   $ \\
 \maybemidrule                                                             
 $J$ & $8$ & $4$ & $1, 44, 339, 630, 323, 42, 1$                   & $  1   , \frac{512}{105}   , \frac{193}{18}   , \frac{83}{6}   , \frac{205}{18}   , \frac{361}{60}   , \frac{17}{9}   , \frac{23}{84}   $ \\
 \maybemidrule                                                             
 $P_8$ & $8$ & $4$ & $1, 62, 565, 1050, 485, 52, 1$                & $  1   , \frac{1051}{210}   , \frac{2071}{180}   , \frac{2873}{180}   , \frac{131}{9}   , \frac{1547}{180}   , \frac{529}{180}   , \frac{277}{630}   $ \\
 \maybemidrule                                                             
 $W_4$ & $8$ & $4$ & $1, 38, 262, 475, 254, 37, 1$                 & $  1   , \frac{135}{28}   , \frac{3691}{360}   , \frac{1511}{120}   , \frac{88}{9}   , \frac{39}{8}   , \frac{529}{360}   , \frac{89}{420}   $ \\
 \maybemidrule                                                             
 $W^4$ & $8$ & $4$ & $1, 38, 263, 484, 263, 38, 1$                 & $  1   , \frac{169}{35}   , \frac{467}{45}   , \frac{581}{45}   , \frac{91}{9}   , \frac{227}{45}   , \frac{68}{45}   , \frac{68}{315}   $ \\
 \maybemidrule                                                             
 $K_{3,3}$ & $9$ & $5$ & $78, 1116, 3492, 3237, 927, 72, 1$        & $  1   , \frac{307}{56}   , \frac{137141}{10080}   , \frac{3223}{160}   , \frac{37807}{1920}   , \frac{211}{16}   , \frac{5743}{960}   , \frac{1889}{1120}   , \frac{8923}{40320}   $ \\
 \maybemidrule                                                             
 $AG(2,3)$ & $9$ & $3$ & $1, 147, 1230, 1885, 714, 63, 1$          & $  1   , \frac{1453}{280}   , \frac{41749}{3360}   , \frac{581}{32}   , \frac{34069}{1920}   , \frac{927}{80}   , \frac{4541}{960}   , \frac{239}{224}   , \frac{449}{4480}   $ \\
 \maybemidrule                                                             
 Pappus & $9$ & $3$ & $1, 147, 1230, 1915, 744, 66, 1$             & $  1   , \frac{729}{140}   , \frac{3573}{280}   , \frac{381}{20}   , \frac{1499}{80}   , \frac{243}{20}   , \frac{49}{10}   , \frac{153}{140}   , \frac{57}{560}   $ \\
 \maybemidrule                                                             
 Non-Pappus & $9$ & $3$ & $1, 147, 1230, 1925, 754, 67, 1$         & $  1   , \frac{4379}{840}   , \frac{25951}{2016}   , \frac{9287}{480}   , \frac{21967}{1152}   , \frac{987}{80}   , \frac{2855}{576}   , \frac{3701}{3360}   , \frac{275}{2688}   $ \\
 \maybemidrule                                                             
 $Q_3(\mathit{GF}(3)^*)$ & $9$ & $3$ & $1, 147, 1098, 1638, 632, 59, 1$      & $  1   , \frac{433}{84}   , \frac{3079}{252}   , \frac{4193}{240}   , \frac{5947}{360}   , \frac{167}{16}   , \frac{601}{144}   , \frac{787}{840}   , \frac{149}{1680}   $ \\
 \maybemidrule                                                             
 $R_9$ & $9$ & $3$ & $1, 147, 1142, 1717, 656, 60, 1$              & $  1   , \frac{723}{140}   , \frac{49}{4}   , \frac{88}{5}   , \frac{24217}{1440}   , \frac{1291}{120}   , \frac{625}{144}   , \frac{821}{840}   , \frac{133}{1440}   $ \\
\bottomrule
\end{tabular}
\end{centering}
\end{table}

By far the most comprehensive study we made was for the family of
uniform matroids. In this case we based our computations on the theory
of Veronese algebras as developed by M. Katzman in
\cite{Katzman:math0408038}. There, Katzman gives an explicit
equation for the $h^*$-vector of uniform matroid polytopes (again,
using the language of Veronese algebras). For this family we were
able to verify computationally the conjecture is true for all uniform
matroids up to $75$ elements and to prove partial unimodality as explained
in the introduction.

\begin{lemma} \label{lem:conjb}
The coefficients of the Ehrhart polynomial of the matroid polytope of the uniform matroid $U^{2,n}$ are positive.
\end{lemma}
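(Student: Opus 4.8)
The plan is to write the Ehrhart polynomial of $\Po(U^{2,n})$ in closed form and then check that its coefficients are positive after expanding it in the monomial basis. The bases of $U^{2,n}$ are the two-element subsets of $[n]$, and its rank function is $\varphi(A)=\min\{|A|,2\}$. Since $\Po(U^{2,n})$ is the face of the polymatroid $\Po(\varphi)=\{\,\ve x\ge\ve 0:\sum_{i\in A}x_i\le\varphi(A)\ \forall A\,\}$ cut out by $\sum_i x_i=\varphi([n])=2$, and the only active constraints among $\sum_{i\in A}x_i\le\min\{|A|,2\}$ are $x_i\le 1$ (those with $|A|\ge 2$ being implied by $\sum_i x_i=2$ and $\ve x\ge\ve 0$), we get that $\Po(U^{2,n})$ is the second hypersimplex
\[
\Po(U^{2,n})=\{\,\ve x\in\R^n:0\le x_i\le 1,\ \textstyle\sum_{i=1}^n x_i=2\,\}.
\]
Hence $t\,\Po(U^{2,n})\cap\Z^n=\{\,\ve x\in\Z^n:0\le x_i\le t,\ \sum_i x_i=2t\,\}$, and since no two coordinates of such a vector can both exceed $t$, inclusion--exclusion over the events ``$x_i\ge t+1$'' gives
\[
i(\Po(U^{2,n}),t)=\binom{2t+n-1}{n-1}-n\binom{t+n-2}{n-1},
\]
a polynomial of degree $n-1$ (for $n=2$ it is the constant $1$, so I assume $n\ge 3$).

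Next I would pass to the monomial basis using the unsigned Stirling numbers of the first kind $c(m,k)$ (the number of permutations of $[m]$ with $k$ cycles), which appear as the coefficients of the rising factorial $\prod_{i=1}^{m-1}(u+i)=\sum_{j\ge 0}c(m,j+1)u^j$. With $u=2t$ this yields $\binom{2t+n-1}{n-1}=\frac1{(n-1)!}\sum_{j=0}^{n-1}2^j c(n,j+1)t^j$, and the same identity gives $\binom{t+n-2}{n-1}=\frac1{(n-1)!}\sum_{j=0}^{n-1}c(n-1,j)t^j$. Thus the coefficient of $t^j$ in the Ehrhart polynomial equals $\frac1{(n-1)!}\bigl(2^j c(n,j+1)-n\,c(n-1,j)\bigr)$, and it remains to prove this is positive for $0\le j\le n-1$. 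Applying the Stirling recurrence $c(n,j+1)=c(n-1,j)+(n-1)\,c(n-1,j+1)$ turns the bracketed quantity into $(2^j-n)\,c(n-1,j)+2^j(n-1)\,c(n-1,j+1)$; for $j=0$ it equals $(n-1)!$ and for $j=n-1$ it equals $2^{n-1}-n>0$, so the only remaining case is $1\le j\le n-2$.

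For $1\le j\le n-2$ I would invoke Newton's inequalities. Write $c(n-1,j+1)=(n-2)!\,e_j$, where $e_j:=e_j\bigl(1,\tfrac12,\dots,\tfrac1{n-2}\bigr)$ is the $j$-th elementary symmetric function of $n-2$ positive reals. Newton's inequalities give $p_j^2\ge p_{j-1}p_{j+1}$ for the normalized quantities $p_j:=e_j/\binom{n-2}{j}$, so the ratios $p_j/p_{j-1}$ are non-increasing and hence $p_j/p_{j-1}\ge p_{n-2}/p_{n-3}=\tfrac2{n-1}$, the value of the last ratio being a direct computation from $e_{n-2}=\tfrac1{(n-2)!}$ and $e_{n-3}=\tfrac{n-1}{2(n-3)!}$. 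Therefore
\[
\frac{c(n-1,j+1)}{c(n-1,j)}=\frac{e_j}{e_{j-1}}=\frac{p_j}{p_{j-1}}\cdot\frac{n-1-j}{j}\ \ge\ \frac{2(n-1-j)}{(n-1)\,j},
\]
and substituting this lower bound,
\[
(2^j-n)\,c(n-1,j)+2^j(n-1)\,c(n-1,j+1)\ \ge\ \frac{c(n-1,j)}{j}\bigl(2^j(2n-2-j)-nj\bigr)\ \ge\ \frac{n}{j}\,c(n-1,j)\,(2^j-j)\ >\ 0,
\]
where the middle inequality uses $2n-2-j\ge n$ (because $j\le n-2$) and the last uses $2^j>j$ for $j\ge 1$. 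This establishes that every coefficient of $i(\Po(U^{2,n}),t)$ is positive.

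The step I expect to be the real obstacle is the last one: once the Ehrhart polynomial is written in the monomial basis its positivity is not at all evident, and the crude comparison $2^j c(n,j+1)\ge n\,c(n-1,j)$ fails term by term for small~$j$. What makes it go through is feeding the comparison through the Stirling recurrence and then controlling the ratio of consecutive Stirling numbers sharply enough --- which is precisely the ultra-log-concavity (Newton's inequalities) of the coefficient sequence of the real-rooted polynomial $u(u+1)\cdots(u+n-2)$. The remaining ingredients --- the hypersimplex description, the inclusion--exclusion count, and the Stirling expansion --- are routine.
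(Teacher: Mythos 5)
Your proof is correct, and it takes a genuinely different route from the paper's. Both arguments begin from the same closed form
\[
i(\Po(U^{2,n}),k)=\binom{2k+n-1}{n-1}-n\binom{k+n-2}{n-1},
\]
which the paper cites from Katzman and you rederive cleanly by inclusion--exclusion on the second hypersimplex. From there the two proofs diverge. The paper writes the $t^{j}$-coefficient directly as a difference of sums of reciprocal products (equivalently, elementary symmetric functions of $1,\tfrac12,\dots$), factors out a common inner product from both sums, and shows the remaining bracket is positive by the very crude estimate $2^{\text{something}\ge 1}\cdot(\text{inner sum}\ge 1) > 1 + \tfrac1{n-1}$; no log-concavity or ratio bounds are needed. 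You instead expand in the basis of unsigned Stirling numbers of the first kind $c(\cdot,\cdot)$, pass through the recurrence $c(n,j+1)=c(n-1,j)+(n-1)c(n-1,j+1)$, and then control the ratio $c(n-1,j+1)/c(n-1,j)$ by Newton's inequalities for $e_j(1,\tfrac12,\dots,\tfrac1{n-2})$ --- i.e., by the ultra-log-concavity of the coefficient sequence of the real-rooted polynomial $u(u+1)\cdots(u+n-2)$. Your approach invests noticeably more machinery, but it isolates a clean structural reason (monotonicity of $p_j/p_{j-1}$, with the sharp endpoint ratio $2/(n-1)$) for why the crude termwise comparison $2^{j}c(n,j+1)\ge n\,c(n-1,j)$ --- which fails --- can be repaired. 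The paper's factorization trick is shorter and more elementary; yours is more systematic and would generalize more readily to situations where the simple ``$2\cdot 1 - 2$'' cancellation is not available. Both are valid proofs, and your case split ($j=0$ giving $(n-1)!$, $j=n-1$ giving $2^{n-1}-n$, and $1\le j\le n-2$ via Newton) is complete and each step checks out numerically.
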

\begin{proof}
We begin with the expression in Corollary 2.2 in \cite{Katzman:math0408038} which explicitly gives the Ehrhart polynomial of $\Po(U^{r,n})$ as
\begin{equation} \label{eq:uniehrhart}
 i(\Po(U^{r,n}), k) = \sum_{s=0}^{r-1} (-1)^s {n \choose s}{k(r-s) -s +n -1 \choose n-1}.
\end{equation}
Letting $r=2$, Equation \eqref{eq:uniehrhart} becomes
\begin{equation*}
\frac{(2k+n-1)(2k+n-2) \cdots (2k+1)}{(n-1)!} - n \frac{(k+n-2)(k+n-3) \cdots (k)}{(n-1)!}
\end{equation*}
We next consider the coefficient of $k^{n-p-1}$ for $0 \leq p \leq n -1$, which can be written as
\begin{align} 
&2^{n-p-1} \sum_{\substack{1 \leq j_1 < \cdots < j_p \leq n-1 \\ j_q \in \Z }} \frac{1}{(n-j_1) \cdots (n-j_p)} \notag\\
&\qquad - \frac{n}{n-1} \sum_{\substack{1 \leq j_1 < \cdots < j_{p-1} \leq n-2 \\ j_q \in \Z }} \frac{1}{(n-j_1) \cdots(n-j_{p-1})}
\notag
\displaybreak[0]\\
&= 2^{n-p-1} \sum_{\substack{1 \leq j_1 < \cdots < j_{p-1} \leq n-2 \\ j_q \in \Z }} \biggl( \frac{1}{(n-j_1) \cdots (n-j_{p-1})}  \sum_{\substack{j_p=1+j_{p-1}}}^{n-1} \frac{1}{n-j_p} \biggr) \notag\\
&\qquad - \frac{n}{n-1} \sum_{\substack{1 \leq j_1 < \cdots < j_{p-1} \leq n-2 \\ j_q \in \Z }} \frac{1}{(n-j_1) \cdots(n-j_{p-1})}\notag
\displaybreak[0]\\
\label{eq:d2e3}
&=  \sum_{\substack{1 \leq j_1 < \cdots < j_{p-1} \leq n-2 \\ j_q \in \Z }} \biggl( \frac{1}{(n-j_1) \cdots (n-j_{p-1})} \biggl[ 2^{n-p-1} \sum_{\substack{j_p=1 + j_{p-1}}}^{n-1} \frac{1}{n-j_p} - \frac{n}{n-1} \biggr] \biggr).
\end{align}
It is known that the constant in any Ehrhart polynomial is $1$
\cite{Stanley1996Combinatorics-a}, thus we only need to show that Equation
\eqref{eq:d2e3} is positive for $0 \leq p \leq n-2$.  It is sufficient to show
that the square-bracketed term of~\eqref{eq:d2e3},  
\begin{equation} \label{eq:d2e4}
2^{n-p-1} \sum_{\substack{j_p=1 + j_{p-1}}}^{n-1} \frac{1}{n-j_p} - \left( 1 + \frac{1}{n-1} \right),
\end{equation}
is positive for $0 \leq p \leq n-2$ and all $1 \leq j_1 \leq \cdots \leq j_{p-1} \leq n-2$. We can see that $\sum_{\substack{j_p=1 + j_{p-1}}}^{n-1} \frac{1}{n-j_p} \geq 1$. Moreover, since $0 \leq p \leq n-2$ then $2^{n-p-1} \geq 2$ and $1 + \frac{1}{n-1} \leq 2$ since $n \geq 2$,  proving the result.
\end{proof}

To present our results about the $h^*$-vector we begin explaining the
details with the following numbers introduced in
\cite{Katzman:math0408038}, which we refer to as the \emph{Katzman
coefficients}:
\begin{definition} \label{def:kat}
    For any positive integers $n$ and $r$ define the coefficients $A_i^{n,r}$ by
    $$ \sum_{i=0}^{n(r-1)} A_i^{n,r}T^i = (1 + T + \cdots + T^{r-1})^n.$$
    We also define the vector $\ve A_\cdot^{n,r}$ as 
    $\left( A_0^{n,r},A_1^{n,r},\ldots ,A_{n(r-1)}^{n,r} \right)$.
\end{definition}

Looking at the definition of the Katzman coefficient, we see that
$A_j^{n,2} = {n \choose j}$ and $A_j^{n,1} = 0$ unless $j=0$, in which
case we have $A_0^{n,1} = 1$. \\ \indent Below we derive some new and
useful equalities and prove symmetry and unimodality for the Katzman
coefficients. Katzman \cite{Katzman:math0408038} gave an explicit
equations for the $h^*$-vector of uniform matroid polytopes and the
coefficients of their Ehrhart polynomials, although he did not use the same
language. We restate it here for our purposes:

\begin{lemma}[See Corollary 2.9 in \cite{Katzman:math0408038}] \label{katlemma}
    Let $\Po(U^{r,n})$ be the matroid polytope 
    of the uniform matroid of rank $r$ on $n$ elements. Then 
    the $h^*$-polynomial of $\Po(U^{r,n})$ is given by

    \begin{equation} \label{eq:hvec2}
        \sum_{s=0}^{r-1} \sum_{j=0}^s \sum_{k=0}^j \sum_{l \geq k} (-1)^{s +j +k} \left[ {n \choose s} {s \choose j} {j \choose k} A_{(l-k)(r-s)}^{n-j,r-s} \right] T^{l}.
    \end{equation}
    That is, for $\ve h^*(\Po(U^{n,r)})) = (h^*_0,\ldots, h^*_r)$, 
    \begin{equation*} \label{eq:hvec2coeff}
        h^*_l = \sum_{s=0}^{r-1} \sum_{j=0}^s \sum_{k=0}^j(-1)^{s +j +k} {n \choose s} {s \choose j} {j \choose k} A_{(l-k)(r-s)}^{n-j,r-s}.
    \end{equation*} 
    For $r=2$ the $h^*$-polynomial of $\Po(U)^{n,2}$ is
    \begin{equation} \label{eq:dim2}
        \left( \sum_{l \geq 0} {n \choose 2l} T^l \right) - nT .
    \end{equation}
\end{lemma}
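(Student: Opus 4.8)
The plan is to start from the closed form for the Ehrhart polynomial of $\Po(U^{r,n})$ recorded in~\eqref{eq:uniehrhart} (Corollary~2.2 of \cite{Katzman:math0408038}), form the Ehrhart series $\sum_{k\ge0}i(\Po(U^{r,n}),k)\,t^k$, and rewrite it in the shape $h^*(t)/(1-t)^n$ required by~\eqref{lem:ratehr}. Since $\Po(U^{r,n})$ is the hypersimplex $\Delta_{r,n}$, of dimension $n-1$ when $1\le r\le n-1$ (the cases $r\in\{0,n\}$ give a single point, where the claim is immediate), the numerator obtained this way is \emph{the} $h^*$-polynomial, and reading off its coefficients yields~\eqref{eq:hvec2} and hence the coefficient formula for $h^*_l$.

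First I would remove the inhomogeneous shift from $\binom{(r-s)k-s+n-1}{n-1}$. Writing $\binom{m+N-1}{N-1}=[x^m](1-x)^{-N}$ and expanding $x^s=\bigl(1-(1-x)\bigr)^s=\sum_{j=0}^s(-1)^j\binom{s}{j}(1-x)^j$ gives, for all $k\ge0$,
\begin{equation*}
  \binom{(r-s)k-s+n-1}{n-1}=\sum_{j=0}^s(-1)^j\binom{s}{j}\binom{(r-s)k+(n-j)-1}{(n-j)-1}.
\end{equation*}
The second and main ingredient is the identity, for positive integers $c,N$,
\begin{equation*}
  \sum_{k\ge0}\binom{ck+N-1}{N-1}t^k=\frac{\sum_{l\ge0}A^{N,c}_{lc}\,t^l}{(1-t)^N},
\end{equation*}
which I expect to prove via the coefficientwise form $\sum_{l\ge0}A^{N,c}_{lc}\binom{k-l+N-1}{N-1}=\binom{ck+N-1}{N-1}$: a composition of $ck$ into $N$ non-negative parts $m_1,\dots,m_N$ corresponds bijectively to data $(a_i,b_i)$ with $m_i=a_i+cb_i$, $0\le a_i<c$, $b_i\ge0$, and then $\sum_i a_i=(k-\sum_i b_i)c$ is automatically a multiple of $c$, which turns the count into the left-hand side; alternatively one can argue by roots of unity, using $A^{N,c}_{lc}=[x^{lc}]\bigl((1-x^c)/(1-x)\bigr)^N$. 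This is the technical heart of the proof.

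Combining the two ingredients, I would substitute the first identity into the Ehrhart series, apply the second with $N=n-j$ and $c=r-s$, expand $(1-t)^j=\sum_{m}(-1)^m\binom{j}{m}t^m$ to bring everything over the common denominator $(1-t)^n$, re-index the exponent of $t$, and sum $(-1)^s\binom{n}{s}$ times the result over $s=0,\dots,r-1$; this produces precisely the four-fold sum of~\eqref{eq:hvec2}, and since the series is then a polynomial over $(1-t)^n$ while $\dim\Po(U^{r,n})=n-1$, that polynomial is the $h^*$-polynomial (in particular it automatically has degree at most $n-1$). For~\eqref{eq:dim2} I would set $r=2$ in this sum and use $A^{n,2}_i=\binom{n}{i}$ together with $A^{m,1}_0=1$ and $A^{m,1}_i=0$ for $i\ge1$: the $s=0$ term contributes $\sum_{l\ge0}\binom{n}{2l}T^l$, the $s=1$, $j=0$ and $s=1$, $j=1$, $k=0$ terms cancel, and the $s=1$, $j=1$, $k=1$ term contributes $-nT$, giving $\bigl(\sum_{l\ge0}\binom{n}{2l}T^l\bigr)-nT$. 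The obstacles I anticipate are the multisection identity above and keeping the signs and summation ranges straight in the assembly step.
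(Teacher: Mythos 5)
The paper gives no proof of this lemma at all: it is stated as a restatement of Corollary 2.9 in Katzman's preprint \cite{Katzman:math0408038} and simply cited. So there is no paper proof to compare against; your job was effectively to reconstruct Katzman's argument, and you did so correctly.

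Your derivation is sound. Starting from~\eqref{eq:uniehrhart}, the binomial-shift identity $\binom{ck-s+n-1}{n-1}=\sum_{j=0}^s(-1)^j\binom{s}{j}\binom{ck+n-j-1}{n-j-1}$ follows from $x^s=\sum_j(-1)^j\binom{s}{j}(1-x)^j$ applied inside $[x^{ck}]\,x^s(1-x)^{-n}$, as you say. The multisection identity
\begin{equation*}
\sum_{k\ge0}\binom{ck+N-1}{N-1}\,t^k=\frac{\sum_{l\ge0}A^{N,c}_{lc}\,t^l}{(1-t)^N}
\end{equation*}
is the technical heart, and your combinatorial proof of the coefficientwise form works: writing each part $m_i$ of a weak composition of $ck$ into $N$ parts uniquely as $m_i=a_i+cb_i$ with $0\le a_i<c$ forces $\sum a_i=c\bigl(k-\sum b_i\bigr)$, the $(a_i)$-choices are counted by $A^{N,c}_{lc}$ by the very definition of the Katzman coefficients, and the $(b_i)$-choices contribute $\binom{k-l+N-1}{N-1}$. (Equivalently, $(1-x)^{-N}=(1-x^c)^{-N}\bigl((1-x^c)/(1-x)\bigr)^N$ and extract the powers $x^{ck}$.) Bringing the terms over the common denominator $(1-t)^n$, expanding $(1-t)^j=\sum_k(-1)^k\binom{j}{k}t^k$, and shifting $l\mapsto l+k$ yields exactly the four-fold sum of~\eqref{eq:hvec2}; since $\dim\Po(U^{r,n})=n-1$, the numerator over $(1-t)^n$ is by definition the $h^*$-polynomial. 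Your $r=2$ specialization also checks: the $s=1,j=0$ and $s=1,j=1,k=0$ terms (each $\pm n$ at $l=0$) cancel, leaving $\sum_{l\ge0}\binom{n}{2l}T^l - nT$. This is a complete proof of what the paper only cites.
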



The
following lemma is a direct consequence of Corollary 2.9 in
\cite{Katzman:math0408038}:

\begin{lemma}
    Let $\Po(U^{2,n})$ be the matroid polytope 
    of the uniform matroid of rank $2$ on $n$ elements. 
    The $h^*$-vector of $\Po(U^{2,n})$ is unimodal.
\end{lemma}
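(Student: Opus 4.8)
The plan is to read off the $h^*$-vector of $\Po(U^{2,n})$ explicitly from Lemma~\ref{katlemma} and then reduce unimodality to the classical unimodality of a single row of Pascal's triangle. By equation~\eqref{eq:dim2}, the $h^*$-polynomial of $\Po(U^{2,n})$ equals $\bigl(\sum_{l\ge 0}{n\choose 2l}T^l\bigr)-nT$, so that $h^*_0={n\choose 0}=1$, $h^*_1={n\choose 2}-n$, and $h^*_l={n\choose 2l}$ for every $l\ge 2$ (with ${n\choose 2l}=0$ once $2l>n$). In other words, $\ve h^*(\Po(U^{2,n}))$ is obtained from the sequence $u_l:={n\choose 2l}$, $l=0,1,2,\dots$, by lowering only the entry at index $l=1$, and lowering it by~$n$. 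The cases $n\le 3$ are disposed of by inspection: for $n=2$ the polytope is a point, and for $n=3$ the $h^*$-polynomial is the constant~$1$, so in both cases $\ve h^*$ is trivially unimodal. Henceforth assume $n\ge 4$; then $h^*_1={n\choose 2}-n\ge 2>1=h^*_0$.

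Next I would record that the sequence $(u_l)_{l\ge 0}=({n\choose 2l})_{l\ge 0}$ is unimodal. This follows from the classical unimodality of the row ${n\choose 0},{n\choose 1},\dots,{n\choose n}$ (non-decreasing up to $k=\lfloor n/2\rfloor$, non-increasing afterwards), together with the elementary fact that any sub-sequence of a unimodal sequence is again unimodal; here one takes the sub-sequence indexed by the even integers. Fix an index $q$ at which $(u_l)_{l\ge 0}$ attains its maximum; since $u_0=1\le{n\choose 2}=u_1$ we have $q\ge 1$.

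Finally I would transfer unimodality to $\ve h^*$ by a short case check, using that $h^*_l=u_l$ for all $l\neq 1$ and $h^*_1=u_1-n\le u_1$. If $q\ge 2$, then $u_1\le u_2$, hence $h^*_0\le h^*_1\le h^*_2$, while $h^*_{l-1}\le h^*_l$ for $3\le l\le q$ and $h^*_l\ge h^*_{l+1}$ for $l\ge q$; thus $\ve h^*$ is unimodal with peak at~$q$. If $q=1$, then $h^*_2\ge h^*_3\ge\cdots$, and $\ve h^*=(1,h^*_1,h^*_2,\dots)$ is unimodal with peak at index~$1$ when $h^*_1\ge h^*_2$ and with peak at index~$2$ when $h^*_1<h^*_2$, in either case using $h^*_0\le h^*_1$. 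I expect the main (indeed essentially the only) subtle point to be exactly this last step: lowering an \emph{interior} entry of a unimodal sequence can in general open a valley and destroy unimodality, so everything hinges on the fact that the lowered entry $h^*_1$ still exceeds $h^*_0=1$ --- an inequality which fails precisely for the values $n\le 3$ that were treated separately.
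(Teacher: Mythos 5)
Your proof is correct, and it supplies exactly the verification that the paper leaves implicit: the paper simply asserts that the lemma ``is a direct consequence of Corollary~2.9 in Katzman'' (i.e.\ of equation~\eqref{eq:dim2}) without spelling out why, so there is no written argument to compare against. Your route is the natural one and it works: read off $h^*_0=1$, $h^*_1=\binom n2-n$, $h^*_l=\binom n{2l}$ for $l\ge2$; observe that the even-indexed binomials $\bigl(\binom n{2l}\bigr)_l$ form a unimodal sequence (a sub-sequence of a unimodal row of Pascal's triangle is unimodal); dispose of $n\le3$ by inspection; and for $n\ge4$ use $h^*_1=\binom n2-n=\tfrac{n(n-3)}2\ge2>1=h^*_0$ to see that the sole lowered entry still does not open a valley. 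You are also right to flag the lowered-interior-entry issue as the only delicate point, and your two-case check at the peak $q$ handles it cleanly. One small bookkeeping remark you could add for completeness: for $n\ge3$ the full $h^*$-vector has length $n$ (since $\dim\Po(U^{2,n})=n-1$), so there is a tail of zero entries beyond index $\lfloor n/2\rfloor$; since the last nonzero entry is $\binom n{2\lfloor n/2\rfloor}\ge1$, appending zeros preserves unimodality, so nothing breaks.
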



The rank 2 case is an interesting example already. Although the 
$h^*$-vector is unimodal, it is not always symmetric. Next we
present some useful lemmas, the first a combinatorial
description of the Katzman coefficients.

\begin{lemma}
  For $i=0,\dots,n(r-1)$ we have
    \begin{equation} \label{eq:katcomb}
        A_i^{n,r} =\sum_{\substack{0a_0 + 1a_1 + \dots + (r-1)a_{r-1} = i\\ a_0 + a_1 + \dots + a_{r-1} = n }} { n \choose a_0,a_1, \cdots ,a_{r-1}} 
    \end{equation}
    where $a_0,\dots,a_{r-1}$ run through non-negative integers.
\end{lemma}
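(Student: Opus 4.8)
The plan is to read the identity off directly from the multinomial theorem; no auxiliary machinery is needed. Starting from Definition~\ref{def:kat}, I would write the generating polynomial as $1 + T + \cdots + T^{r-1} = \sum_{j=0}^{r-1} T^j$ and raise it to the $n$-th power. The multinomial theorem then gives
\begin{equation*}
  (1 + T + \cdots + T^{r-1})^n = \biggl( \sum_{j=0}^{r-1} T^j \biggr)^{\!n}
  = \sum_{\substack{a_0,\dots,a_{r-1}\geq 0\\ a_0 + a_1 + \cdots + a_{r-1} = n}}
  \binom{n}{a_0,a_1,\dots,a_{r-1}} \prod_{j=0}^{r-1} \bigl(T^j\bigr)^{a_j}.
\end{equation*}
Since $\prod_{j=0}^{r-1} (T^j)^{a_j} = T^{\,0 a_0 + 1 a_1 + \cdots + (r-1) a_{r-1}}$, I would then collect the coefficient of $T^i$ on the right-hand side: a tuple $(a_0,\dots,a_{r-1})$ with $\sum_j a_j = n$ contributes to the $T^i$-term precisely when $0 a_0 + 1 a_1 + \cdots + (r-1) a_{r-1} = i$. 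Comparing with the left-hand side, whose $T^i$-coefficient is $A_i^{n,r}$ by definition, yields exactly \eqref{eq:katcomb}.

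Equivalently, and this is the same computation phrased combinatorially, one can interpret $A_i^{n,r}$ as the number of words of length $n$ over the alphabet $\{0,1,\dots,r-1\}$ whose digit sum equals $i$; grouping such words by the multiplicity vector $(a_0,\dots,a_{r-1})$ recording how often each letter $j$ occurs, and counting the words with a prescribed multiplicity vector by $\binom{n}{a_0,\dots,a_{r-1}}$, gives the same formula. I would include whichever of the two phrasings reads more cleanly alongside the surrounding material.

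There is no genuine obstacle here: the only point requiring (minimal) care is the bookkeeping of exponents in the product $\prod_j (T^j)^{a_j}$, and the statement is in essence a restatement of the multinomial theorem. Accordingly I would keep the proof to these few lines.
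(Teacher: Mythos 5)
Your proof is correct and matches the paper's argument exactly: both expand $(1+T+\cdots+T^{r-1})^n$ via the multinomial theorem and read off the coefficient of $T^i$ by collecting tuples $(a_0,\dots,a_{r-1})$ with $\sum_j a_j = n$ and $\sum_j j a_j = i$. The combinatorial rephrasing you offer is a harmless (and pleasant) alternative gloss, but the underlying computation is identical.
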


\begin{proof}
    Using the multinomial formula \cite{Stanley1997Enumerative-Com} we have

    \begin{align*}
     \sum_{i=0}^{n(r-1)} A_i^{n,r}T^i = & (1 + T + \cdots + T^{r-1})^n  \\
    = &\sum_{\substack{ a_0 + a_1 + \dots + a_{r-1} = n}} { n \choose a_0,a_1, \cdots ,a_{r-1}} 1^{a_0}T^{a_1}T^{2a_2}\dots T^{(r-1)a_{r-1}} \\
    = &\sum_{\substack{ a_0 + a_1 + \dots + a_{r-1} = n}} { n \choose a_0,a_1, \cdots ,a_{r-1}} T^{0a_0 + 1a_1 + \cdots + (r-1)a_{r-1}}.
    \end{align*}
    By grouping the powers of $T$ we get equation \eqref{eq:katcomb}.
\end{proof}

Next we present a generalization of a property of the binomial
coefficients. The following lemma relates the Katzman coefficients to
Katzman coefficients with one less element.
\begin{lemma} \label{lem:dimrel}
    \begin{equation} \label{eq:dimrel}
        A_i^{n,r} = \sum_{k=i-r+1}^i A_k^{n-1,r}
    \end{equation}
    where we define $A_p^{n-1,r} := 0$ when $p < 0$ or $p > (n-1)(r-1)$.
\end{lemma}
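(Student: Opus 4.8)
The plan is to exploit the multiplicativity of the defining generating function in Definition~\ref{def:kat}. First I would peel off one factor of $1+T+\cdots+T^{r-1}$ from the $n$-fold product and recognize the remaining $(n-1)$-fold product by applying Definition~\ref{def:kat} with $n-1$ in place of $n$:
\[
  \sum_{i=0}^{n(r-1)} A_i^{n,r} T^i
  \;=\; (1+T+\cdots+T^{r-1})^n
  \;=\; (1+T+\cdots+T^{r-1})\cdot \sum_{k=0}^{(n-1)(r-1)} A_k^{n-1,r} T^k .
\]

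Next I would expand the product on the right-hand side as $\sum_{j=0}^{r-1}\sum_{k} A_k^{n-1,r} T^{k+j}$ and collect the coefficient of $T^i$. Substituting $k=i-j$, that coefficient equals $\sum_{j=0}^{r-1} A_{i-j}^{n-1,r} = \sum_{k=i-r+1}^{i} A_k^{n-1,r}$, where $A_p^{n-1,r}$ is read as $0$ whenever $p<0$ or $p>(n-1)(r-1)$ — precisely the boundary convention in the statement. Comparing this with the coefficient of $T^i$ on the left-hand side, which is $A_i^{n,r}$, yields \eqref{eq:dimrel}.

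The whole argument is a one-line coefficient comparison, so I do not anticipate any real obstacle; the only point needing a little care is checking that the stated boundary convention is exactly what makes the index shift $k\mapsto i-j$ legitimate for all $i$ with $0\le i\le n(r-1)$, including when $i-r+1<0$ or $i>(n-1)(r-1)$. Since those out-of-range terms are declared to be zero, no separate case analysis is needed, and the two sides also agree trivially at the extreme values $i=0$ and $i=n(r-1)$. This lemma is simply the $\ve A^{n,r}_\cdot$-analogue of the Pascal-type recursion $\binom{n}{j}=\binom{n-1}{j-1}+\binom{n-1}{j}$, which is the special case $r=2$.
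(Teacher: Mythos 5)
Your proof is correct and follows essentially the same route as the paper's: factor the generating function as $(1+T+\cdots+T^{r-1})^{n-1}\cdot(1+T+\cdots+T^{r-1})$, expand the product, and read off the coefficient of $T^i$ using the out-of-range convention. Nothing to add.
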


\begin{proof}
    \begin{equation*}
    \begin{split}
    & \sum_{i = 0}^{n(r-1)} A_i^{n,r} T^i  \\
    & = (1 + T + \dots + T^{r-1})^n = (1 + T + \dots + T^{r-1})^{n-1}(1 + T + \dots + T^{r-1})   \\
    & = \left( \sum_{i = 0}^{(n-1)(r-1)} A_i^{n-1,r} T^i \right)  (1 + T + \dots + T^{r-1})  \\
    & = \sum_{i = 0}^{(n-1)(r-1)} A_i^{n-1,r} T^i + \sum_{i = 0}^{(n-1)(r-1)} A_i^{n-1,r} T^{i+1} + \cdots + \sum_{i = 0}^{(n-1)(r-1)} A_i^{n-1,r} T^{i + r-1} \\
    & = \sum_{i = 0}^{(n-1)(r-1)} A_i^{n-1,r} T^i + \sum_{i = 1}^{(n-1)(r-1) +1} A_{i-1}^{n-1,r} T^{i} + \cdots + \sum_{i = r-1}^{(n-1)(r-1) + r-1} A_{i-r+1}^{n-1,r} T^{i} \\
    & = \sum_{i=0}^{(n-1)(r-1)+r-1} \left( A_i^{n-1,r} + A_{i-1}^{n-1,r} + \cdots + A_{i-r+1}^{n-1,r} \right) T^i.
    \end{split}
    \end{equation*}
    Thus we get equation \eqref{eq:dimrel}.
\end{proof}

The following lemma relates the Katzman coefficients of rank~$r$
with those of rank~$r-1$.
\begin{lemma} \label{lem:rankrel}
    \begin{displaymath}
    \sum A_i^{n,r}T^i = \sum_{k=0}^n { n \choose k} T^k \left( \sum_{l=0}^{k(r-2)} A_l^{k,r-1}T^l \right)
    \end{displaymath}
    or in other words
    \begin{equation*} \label{eq:rankrel}
    A_i^{n,r} \; = \; \sum_{\substack{k+l=i \\ 0 \leq k \leq n \\ 0 \leq l \leq k(r-2)}} {n \choose k}  A_l^{k,r-1}.
    \end{equation*}
\end{lemma}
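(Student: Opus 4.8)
The plan is to exploit the elementary factorization of the geometric-sum polynomial that appears in \autoref{def:kat}. First I would write
\begin{equation*}
  1 + T + \cdots + T^{r-1} = 1 + T\bigl(1 + T + \cdots + T^{r-2}\bigr),
\end{equation*}
which is just a regrouping of the terms of the finite geometric series.

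Next I would raise both sides to the $n$-th power and apply the binomial theorem to the right-hand side, obtaining
\begin{equation*}
  \sum_{i=0}^{n(r-1)} A_i^{n,r} T^i
  = \bigl(1 + T + \cdots + T^{r-1}\bigr)^n
  = \sum_{k=0}^n \binom{n}{k} T^k \bigl(1 + T + \cdots + T^{r-2}\bigr)^k.
\end{equation*}
Here I use \autoref{def:kat} on the left to recognize the coefficients $A_i^{n,r}$. The remaining step is to identify the inner factor: by \autoref{def:kat} applied with parameters $k$ (in place of $n$) and $r-1$ (in place of $r$), we have $\bigl(1 + T + \cdots + T^{(r-1)-1}\bigr)^k = \sum_{l=0}^{k(r-2)} A_l^{k,r-1} T^l$, since the exponents in $(1+T+\cdots+T^{r-2})$ range over $0,\dots,(r-1)-1$. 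Substituting this in yields exactly the first displayed identity of the lemma.

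Finally, to get the coefficient form, I would collect the coefficient of $T^i$ on the right-hand side: a term $T^i$ arises from a pair $(k,l)$ with $k + l = i$, subject to $0 \leq k \leq n$ and $0 \leq l \leq k(r-2)$, contributing $\binom{n}{k} A_l^{k,r-1}$. Summing over all such pairs gives
\begin{equation*}
  A_i^{n,r} = \sum_{\substack{k+l=i \\ 0 \leq k \leq n \\ 0 \leq l \leq k(r-2)}} \binom{n}{k} A_l^{k,r-1},
\end{equation*}
as claimed. I do not anticipate any real obstacle here: every step is a manipulation of polynomial identities, and the only point requiring a moment's care is matching the index ranges (the exponent bound $r-2$ for the rank-$(r-1)$ Katzman polynomial, and the constraint $l \leq k(r-2)$ that makes the double sum finite and consistent with the defining degree of $A_\cdot^{k,r-1}$).
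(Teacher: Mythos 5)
Your proof is correct and follows essentially the same route as the paper's: both regroup $1 + T + \cdots + T^{r-1}$ as $1$ plus $T$ times the rank-$(r-1)$ geometric sum, apply the binomial theorem, and then recognize the inner factor $(1 + T + \cdots + T^{r-2})^k$ as the generating polynomial defining $A_l^{k,r-1}$. The only cosmetic difference is that you factor out the $T$ before expanding, whereas the paper expands first and pulls $T^k$ out of the $k$-th term; the index bookkeeping at the end is also the same.
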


\begin{proof}
    From Definition \ref{def:kat}
    \begin{align*}
        \sum_{i=0}^{n(r-1)} A_i^{n,r}T^i = & (1 + T + \cdots + T^{r-1})^n = (1 + \big[T + \cdots + T^{r-1}\big])^n \\
        = & \sum_{k=0}^n { n \choose k} \big[T + \cdots + T^{r-1}\big]^k = \sum_{k=0}^n { n \choose k} T^k\big[1 + \cdots + T^{r-2}\big]^k \\
        = & \sum_{k=0}^n { n \choose k} T^k \left( \sum_{l=0}^{k(r-2)} A_l^{k,r-1}T^l \right).
    \end{align*}
\end{proof}

\begin{lemma} \label{lem:katsymuni}
    The Katzman coefficients are unimodal and symmetric in the index $i$. That is, the vector $\left( A_0^{n,r}, A_1^{n,r}, \dots, A_{n(r-1)}^{n,r} \right)$ is unimodal and symmetric.
\end{lemma}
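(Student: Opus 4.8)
The plan is to establish symmetry and unimodality of $\ve A_\cdot^{n,r}$ separately: symmetry follows immediately from the defining generating function, and unimodality is proved by induction on~$n$ using the recursion of \autoref{lem:dimrel}.

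For symmetry, note that $1+T+\cdots+T^{r-1}$ is palindromic, since $T^{r-1}(1+T^{-1}+\cdots+T^{-(r-1)}) = 1+T+\cdots+T^{r-1}$. Raising to the $n$-th power gives
\[
  T^{n(r-1)}\Bigl(\textstyle\sum_{i=0}^{n(r-1)} A_i^{n,r} T^{-i}\Bigr)
  = \Bigl(\textstyle\sum_{i=0}^{n(r-1)} A_i^{n,r} T^{i}\Bigr),
\]
and comparing coefficients yields $A_i^{n,r} = A_{n(r-1)-i}^{n,r}$ for all~$i$; that is, $\ve A_\cdot^{n,r}$ is symmetric about the center $c_n := n(r-1)/2$. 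For unimodality I would induct on~$n$. The base case $n=1$ is the constant vector $(1,\dots,1)$, which is unimodal. Assume $\ve A_\cdot^{n-1,r}$ is unimodal and (by the previous paragraph) symmetric about $c_{n-1} = (n-1)(r-1)/2$. By \autoref{lem:dimrel}, $A_i^{n,r} = \sum_{k=i-r+1}^{i} A_k^{n-1,r}$, with the convention that out-of-range terms vanish, so the consecutive differences telescope:
\[
  A_i^{n,r} - A_{i-1}^{n,r} = A_i^{n-1,r} - A_{i-r}^{n-1,r}.
\]
It then suffices to show this difference is $\geq 0$ for all $i$ up to a threshold and $\leq 0$ afterward. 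For this I would use the elementary fact that a symmetric unimodal sequence is non-increasing in the distance of the index from its center of symmetry; hence, writing $x = i - c_{n-1}$, we have $A_i^{n-1,r} \geq A_{i-r}^{n-1,r}$ precisely when $|x|\leq|x-r|$, i.e.\ $x \leq r/2$, i.e.\ $i \leq c_{n-1}+r/2 = (n(r-1)+1)/2$, and the reverse inequality holds when $i \geq c_{n-1}+r/2$. Taking $p := \lfloor (n(r-1)+1)/2\rfloor$, we get $A_0^{n,r} \leq \cdots \leq A_p^{n,r} \geq A_{p+1}^{n,r} \geq \cdots \geq A_{n(r-1)}^{n,r}$, completing the induction.

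The one delicate point is the step identifying ``symmetric $+$ unimodal'' with ``non-increasing in distance from the center'': one must observe that symmetry forces the peak (or plateau) of the unimodal sequence to straddle $c_{n-1}$, which is exactly what makes the value depend monotonically on $|i-c_{n-1}|$; after that it is only the one-variable inequality $|x|\leq|x-r|\Leftrightarrow x\leq r/2$. The convention $A_k^{n-1,r}=0$ outside $0\leq k\leq(n-1)(r-1)$ causes no difficulty, since $0$ is also the minimum value of the sequence and the relevant indices lie on the correct side of the threshold. (Alternatively one could bypass the induction and invoke the classical fact that a product of polynomials whose coefficient vectors are symmetric, unimodal, and nonnegative again has a symmetric unimodal coefficient vector, applied to $(1+T+\cdots+T^{r-1})^n$; but the inductive argument via \autoref{lem:dimrel} is self-contained and uses only machinery already developed here.)
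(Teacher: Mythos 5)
Your proof is correct and follows essentially the same plan as the paper: symmetry from the structure of the generating polynomial, and unimodality by induction on~$n$ via the recursion of \autoref{lem:dimrel}. The one substantive difference is in rigor rather than strategy. For symmetry, the paper manipulates the multinomial-coefficient formula \eqref{eq:katcomb} to produce the bijection $(a_0,\dots,a_{r-1}) \mapsto (a_{r-1},\dots,a_0)$, whereas you argue directly from the palindromic identity $T^{n(r-1)}P(1/T)=P(T)$ for $P(T)=(1+T+\cdots+T^{r-1})^n$; these are equivalent, and your route is a bit cleaner. For unimodality, the paper's inductive step is only a sketch (the ``sliding window'' over $\ve A_\cdot^{n-1,r}$ whose sum ``will increase'' up to the center), while you make it precise: the telescoping difference $A_i^{n,r}-A_{i-1}^{n,r}=A_i^{n-1,r}-A_{i-r}^{n-1,r}$, combined with the fact that a symmetric unimodal sequence is monotone non-increasing in $|i-c_{n-1}|$, reduces the sign determination to the one-variable inequality $|x|\leq|x-r| \Leftrightarrow x\leq r/2$. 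You also correctly dispatch the boundary indices (where the $A_k^{n-1,r}$ are zero by convention) by noting they fall on the correct side of the threshold. In short, this is the paper's argument carried out in full detail.
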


\begin{proof}
    We first prove symmetry. Considering equation \eqref{eq:katcomb} we assume that
    \begin{equation*} 
        0a_0 + 1a_1 + \dots + (r-1)a_{r-1} = i \qquad \text{and} \qquad a_0 + a_1 + \dots + a_{r-1} = n. 
    \end{equation*}
    These two assumptions imply that
    \begin{align*}
    &\big((r-1)-0\big)a_0 + \big((r-1)-1\big)a_1 + \dots + \big((r-1)-(r-1)\big)a_{r-1}  \\
     = & (r-1)a_0 + (r-1)a_1 + \dots + (r-1)a_n -0a_0 - 1a_1 - \dots - (r-1)a_{r-1} \\ 
     = & (r-1)n - i.
    \end{align*}
    Therefore
    \begin{align*}
      A_i^{n,r} = & \sum_{\substack{0a_0 + 1a_1 + \dots + (r-1)a_{r-1} = i\\ a_0 + a_1 + \dots + a_{r-1} = n }} { n \choose a_0,a_1, \cdots ,a_{r-1}} & \\
     = & \sum_{\substack{(r-1)a_0 + (r-2)a_1 + \dots + 0a_{r-1} = i\\ a_0 + a_1 + \dots + a_{r-1} = n }} { n \choose a_0,a_1, \cdots ,a_{r-1}} = A_{n(r-1)-i}^{n,r}.
    \end{align*}


    To prove unimodality we proceed by induction on $n$, where $r$ is fixed.
    First, $\ve A_\cdot^{1,r}$ is unimodal. Assume for $n-1$ that $\ve
    A_\cdot^{n-1,r} = \left(
            A_0^{n-1,r},A_1^{n-1,r},\cdots,A_{(n-1)(r-1)}^{n-1,r}\right)$ is
    unimodal. Using equation \eqref{eq:dimrel} and the fact that $\ve A_\cdot^{n-1,r}$
    is symmetric we get that $\ve A_\cdot^{n,r}$ is unimodal. To help see this,
    one can view equation \eqref{eq:dimrel} as a sliding window over $r$ elements of
    the vector $\ve A_\cdot^{n-1,r}$, that is, $A_i^{n,r}$ is equal to the sum
    of the $r$ elements in a window over the vector $\ve A_\cdot^{n-1,r}$. As
    the window slides up the vector $\ve A_\cdot^{n-1,r}$, the sum will
    increase.  When the window is on the center of $\ve A_\cdot^{n-1,r}$
    symmetry and unimodality of $\ve A_\cdot^{n-1,r}$ will imply unimodality of
    $\ve A_\cdot^{n,r}$.
\end{proof}

Now we use the explicit equation for the $h^*$-vector of uniform
matroid polytopes to prove partial unimodality of rank $3$ uniform
matroids. First we note that the coefficient of $T^l$, $h^*_l$, in
equation \eqref{eq:hvec2} is
\begin{displaymath}
h^*_l = \sum_{s=0}^{r-1} \sum_{j=0}^s \sum_{k=0}^j(-1)^{s +j +k} {n \choose s} {s \choose j} {j \choose k} A_{(p-k)(r-s)}^{n-j,r-s}.
\end{displaymath}

Letting the rank $r = 3$, and using equation \eqref{eq:hvec2}, we 
get the $h^*$-polynomial (which is grouped by values of $s$ 
from \eqref{eq:hvec2}),
\begin{multline*}
    \sum_{l \geq 0} \left[ {n \choose 0 } A_{3l}^{n,3} + {n \choose 1} \left( -A_{2l}^{n,2} + A_{2l}^{n-1,2} - A_{2(l-1)}^{n-1,2} \right) + \right.  \\
  + \left.  {n \choose 2} \left( A_{l}^{n,1} - 2A_{l}^{n-1,1} + 2A_{l-1}^{n-1,1} + A_{l}^{n-2,1} - 2A_{l-1}^{n-2,1} + A_{l-2}^{n-2,1} \right) \right] T^l.
\end{multline*}
Now using that $A_{i}^{n,2} = { n \choose i}$ and $A_{i}^{n,1} = \delta_0(i)$, where
$\delta_j(p) = \left\{
\begin{array}{lll}
1 & \text{if } p = j \\
0 & \text{else}
\end{array}
\right.$ ,
 we get
\begin{multline*}
    \sum_{l \geq 0}  \left[ A_{3l}^{n,3} + n \left( - { n \choose 2l} \right. \left. + {n-1 \choose 2l} -  {n-1 \choose 2l - 2} \right) + \right.  \\
    \left. + {n \choose 2} \left(  \delta_0(l) - 2 \delta_0(l) + 2 \delta_{1}(l) + \delta_0(l) - 2 \delta_{1}(l) + \delta_2(l)   \right) \right] T^l
\end{multline*}
\begin{equation*}
    = \sum_{l \geq 0}  \left[ A_{3l}^{n,3} + n \left( - { n \choose 2l} + {n-1 \choose 2l} - {n-1 \choose 2l - 2} \right) + \delta_2(l) {n \choose 2}  \right] T^l.
\end{equation*}
Using properties of the binomial coefficients, we see that
\begin{align*}
     \left[ -{n \choose 2l} \right]  + {n-1 \choose 2l} - {n-1 \choose 2l - 2} = & \left[ -{n-1 \choose 2l} - {n-1 \choose 2l -1} \right] + {n-1 \choose 2l} - {n-1 \choose 2l - 2} \\
= & - {n-1 \choose 2l -1} - {n-1 \choose 2l - 2} \\
= & -{n \choose 2l - 1}.
\end{align*}
So the $h^*$-polynomial of rank three uniform matroid polytopes is
\begin{equation} \label{hvecrank3}
     \sum_{l \geq 0}  \left[ A_{3l}^{n,3} - n{n \choose 2l - 1}  + \delta_2(l) {n \choose 2}  \right] T^l.
\end{equation}
Using Lemma \ref{eq:dimrel}, the coefficient of $T^l$, if $3l \leq n$, is
\begin{equation} \label{hvecrank3eq2}
    h^*_l = \sum_{\substack{k+p = 3l \\ 0 \leq p \leq k \leq n}} {n \choose k}{k \choose p} - n{n \choose 2l-1} + \delta_2(l){n \choose 2}
\end{equation}
\begin{multline*}
= \left[ {n \choose 3l}{3l \choose 0} + {n \choose 3l-1}{ 3l-1 \choose 1} + \cdots+ {n \choose 3l - \lfloor 3l/2 \rfloor}   {3l - \lfloor 3l/2 \rfloor \choose \lfloor 3l/2 \rfloor} \right] + \\
   - n{n \choose 2l-1} + \delta_2(l){n \choose 2}.
\end{multline*}

Next we show that when $g$ is fixed $A_g^{n,3}$ is a polynomial of degree $g$
in the indeterminate $n$, with positive leading coefficient. Assume  $g \leq
n$. Considering Lemma \ref{lem:rankrel} and when $g \leq n$, 
\begin{equation}
   A_{g}^{n,3} = \sum_{\substack{k+p = g \\ 0 \leq p \leq k \leq n}} {n \choose k}{k \choose p}
   \label{katdim3}
\end{equation}
where $n \choose q$ is a polynomial
of degree $q$ with positive leading coefficient. The highest
degree polynomial in the sum is $n \choose g$, a degree $g$ 
polynomial. Hence $A_g^{n,3}$ is a polynomial of degree $g$ 
in the indeterminate $n$, with positive leading coefficient. 
If $g \geq n$, then $A_g^{n,3} = A_{n-g}^{n,3}$ since
the Katzman coefficients are symmetric by Lemma \ref{lem:katsymuni}.

\begin{proof}[Proof of Theorem \ref{partialuni} Part (2)]
     Let $I$ be a non-negative integer. From above we see that $A_g^{n,3}$ is a
     degree $g$ polynomial in the indeterminate $n$, with positive leading
     coefficient.  Equation \eqref{hvecrank3} is the $h^*$-polynomial of
     $U^{3,n}$, which is a sum of polynomials in $n$, the highest degree
     polynomial being $A_{3l}^{n,3}$, a polynomial of degree $3l$. So, $h^*_l -
     h^*_{l-1}$ is the difference of a degree $3l$ and $3(l-1)$ polynomial,
     hence $h^*_l - h^*_{l-1}$ is a degree $3l$ polynomial with positive
     leading coefficient.  For sufficiently large $n$, call it $n(I)$, $h^*_l
     - h^*_{l-1}$ is positive for $0 \leq l \leq I$. Hence, the $h^*$-vector of
     $U^{3,n}$ is non-decreasing up to the index $I$ for $n \geq n(I)$.
\end{proof}

One might ask if equation \eqref{katdim3} has a simpler form. We ran the
\emph{WZ} algorithm on our expression, which proved that equation
\eqref{katdim3} can not be written as a linear combination of a fixed number of
hypergeometric terms (\emph{closed form}) \cite{Petkovsek1996AB}. There is
still the possibility that this expression has a simpler form, though not a
closed form as described above.

\section*{Acknowledgments}
The first author was supported by NSF grant DMS-0608785. The second
author was supported by VIGRE-GRANT DMS-0135345 and DMS-0636297. The
third author was supported by a 2006/2007 Feodor Lynen Research
Fellowship from the Alexander von Humboldt Foundation. We thank Dillon
Mayhew and Gordon Royle for providing data for the matroids listed in
Table \ref{tab:hvec}.

\bibliographystyle{abbrvnat}
\bibliography{barvinok,matroid}

\end{document}